\newtheorem{theorem}{Theorem}
\newtheorem{lemma}{Lemma}
\newtheorem{definition}{Definition}
\newtheorem{proof}{Proof}
\DeclareMathAlphabet{\pazocal}{OMS}{zplm}{m}{n}
\newcommand{\lJump}{[\![}
\newcommand{\rJump}{]\!]}
\definecolor{mycolor}{rgb}{0.122, 0.435, 0.698}
\definecolor{blue-green}{rgb}{0.0, 0.87, 0.87}
\definecolor{royalblue}{rgb}{0.01, 0.28, 1.0}
\definecolor{bluet}{rgb}{0.1, 0.1, 0.8}
\newmdenv[innerlinewidth=1pt, roundcorner=0.5pt,linecolor=mycolor,innerleftmargin=0.5pt,
innerrightmargin=0.5pt,innertopmargin=0.4pt,innerbottommargin=0.4pt]{mybox}
\DeclareMathAlphabet{\pazocal}{OMS}{zplm}{m}{n}
\newcommand{\ba}{\begin{array}}
\newcommand{\ea}{\end{array}}
\newcommand{\be}{\begin{equation}}
\newcommand{\ee}{\end{equation}}
\newcommand{\ben}{\begin{equation*}}
\newcommand{\een}{\end{equation*}}
\newcommand{\bd}{\begin{displaymath}}
\newcommand{\ed}{\end{displaymath}}
\newcommand{\bi}{\begin{itemize}}
\newcommand{\ei}{\end{itemize}}
\newcommand{\bn}{\begin{enumerate}}
\newcommand{\en}{\end{enumerate}}
\newtheorem{remark}{Remark}
\begin{document}

\title{Well-posed Boundary Conditions and Energy Stable Discontinuous Galerkin Spectral Element Method for the Linearized Serre Equations}
\author{
  Kenny Wiratama\thanks{Mathematical Sciences Institute, Australian National University, Australia
  ({kenny.wiratama@anu.edu.au}).}
  \and Kenneth Duru\thanks{Mathematical Sciences Institute, Australian National University, Australia
  ({kenneth.duru@anu.edu.au}).}
  \and Stephen Roberts\thanks{Mathematical Sciences Institute, Australian National University, Australia
  ({stephen.roberts@anu.edu.au}).}
    \and Christopher Zoppou\thanks{Mathematical Sciences Institute, Australian National University, Australia
  ({christopher.zoppou@anu.edu.au}).}}
\maketitle

\begin{abstract}
We derive well-posed boundary conditions for the linearized Serre equations in one spatial dimension by using the energy method. An energy stable and conservative discontinuous Galerkin spectral element method with simple upwind numerical fluxes is proposed for solving the initial boundary value problem. We derive discrete energy estimates for the numerical approximation and prove a priori error estimates in the energy norm. Detailed numerical examples are provided to verify the theoretical analysis and show convergence of numerical errors.
\end{abstract}

\section{Introduction}
The propagation of free surface water waves is governed by the Euler equations, under the assumption that the fluid flow is inviscid and irrotational. The free surface nature makes the problem of solving the Euler equations difficult \cite{mitsotakis2017modified}. Consequently, there are approximate models derived from the Euler equations, such as the shallow water wave equations \cite{craik2004origins} and the Serre equations \cite{dutykh2013finite}. In contrast to the commonly used shallow water wave equations, the Serre equations are derived without hydrostatic pressure assumptions, and hence they contain higher order nonlinear dispersive terms. As a result, the Serre equations can model dispersive water waves \cite{zoppou2017numerical}. 

The Serre equations in one spatial dimension (1D) describing nonlinear dispersive water waves over a horizontal bed can be written as the system of partial differential equations (PDEs)
\begin{subequations}\label{serre}
\begin{align}
    \frac{\partial \zeta}{\partial t} + \frac{\partial (\bar{u}\zeta)}{\partial x} = 0, \label{eq:continuity}
    \end{align}
    \begin{align}
    \frac{\partial (\bar{u}\zeta)}{\partial t} + \frac{\partial}{\partial x} \left[\zeta\bar{u}^2 + \frac{g\zeta^2}{2} + \frac{\zeta^3}{3}\left(\left( \frac{\partial \bar{u}}{\partial x}\right)^2 - \bar{u} \frac{\partial^2 \bar{u}}{\partial x^2} - \frac{\partial ^2 \bar{u}}{\partial x\partial t}\right) \right] = 0. \label{eq:momentum}
    \end{align}
\end{subequations}
Here, $x$ is the spatial coordinate, $t$ is time, $\bar{u} = \bar{u}(x,t)$ is the depth\hyp{}averaged velocity of a free surface fluid with water depth $\zeta = \zeta(x,t)$, and $g$ is the gravitational acceleration. The continuity equation \eqref{eq:continuity} describes the conservation of mass and \eqref{eq:momentum} describes the conservation of momentum.   The conservation of momentum equation \eqref{eq:momentum} contains higher order nonlinear derivative terms, and mixed space-time derivatives of $\bar{u}$. The presence of these terms makes the Serre equations difficult to study both numerically and analytically.

In recent years, considerable research has been devoted to the development of numerical methods for solving the Serre equations. Various numerical techniques have been utilized, and they include finite difference methods \cite{do1993numerical, nwogu1993alternative, beji1996formal}, finite volume methods \cite{zoppou2017numerical, cienfuegos2006fourth, cienfuegos2007fourth, dutykh2013finite}, continuous or discontinuous Galerkin finite element methods \cite{mitsotakis2014galerkin,clamond2017conservative,ZHAO2020109729}, or combinations of these methods. Many of these approaches are restricted to being low order accurate, and are not able to efficiently resolve highly oscillatory dispersive wave modes present in the solution. To circumvent the challenge presented by higher order and mixed derivatives terms in the momentum equation \eqref{eq:momentum}, several methods introduce auxiliary variables to rewrite the Serre equations as a system of first order equations \cite{zoppou2017numerical, cienfuegos2006fourth, cienfuegos2007fourth, dutykh2013finite, ZHAO2020109729}. The main disadvantage of this approach is that we require additional constraints and non-physical boundary and interface conditions to solve the system of first order equations. Another drawback is that the degrees of freedom for the system increase by several factors, see for example \cite{ZHAO2020109729}, which requires up to 8 auxiliary variables to eliminate higher order derivatives terms in the 1D Serre equations. In two spatial dimensions, the required number of auxiliary variables is expected to increase significantly, further limiting the efficiency of the method.

The primary objective of this paper is the development of robust (provably stable), efficient (no auxiliary variables), and high order accurate numerical method for solving the Serre equations, with rigorous mathematical support. Our contributions are two-fold: 1) the derivation of well-posed boundary conditions for the linearized Serre equations; 2) the development of a provably energy stable discontinuous Galerkin spectral element method (DGSEM) for the initial boundary value problem (IBVP).

A necessary and important step towards developing a robust and high order accurate numerical scheme for solving PDEs in a bounded domain is to derive well-posed boundary conditions for the system of PDEs, in particular boundary conditions that ensure that the IBVP at the continuous level is well-posed \cite{ghader2014revisiting}. Unfortunately, the theory of IBVPs for dispersive waves such as the Serre equations is less developed  \cite{cienfuegos2007fourth, noelle2022class}. One of the main difficulties is that there are no well-defined characteristics. Therefore, the theory of characteristics often used for hyperbolic IBVPs is not applicable. In the literature,  most numerical methods for the Serre equations are derived for periodic boundary conditions, and consequently they are not relevant when non-periodic boundary phenomena are important. There are however a few exceptions \cite{cienfuegos2007fourth,noelle2022class,kazakova2020discrete}, where non-periodic boundary conditions are considered, although for specific time discretizations. In \cite{kazakova2020discrete}, non-local artificial boundary conditions for the linearized Serre equations with zero background flow velocity are proposed and analyzed. In the present work, we derive linear well-posed and energy stable boundary conditions for the linearized Serre equations with arbitrary background flow velocity.  The derived boundary conditions are local, and yield bounded energy estimates for the solutions of the IBVP. Given appropriate data, the boundary conditions can be used to effectively impose inflow and outflow boundary conditions.

We will derive a provably stable and high order accurate DGSEM for solving the IBVP without introducing auxiliary variables. For an element based scheme, one of the main challenges lies in how to connect adjacent elements, in particular enforcing the continuity of the solutions and their (first and second) derivatives across the elements' interfaces in a stable manner without introducing auxiliary variables. Another challenge is the derivation of stable and accurate numerical boundary treatment for the IBVP. To succeed, in addition to well-posed boundary conditions, we derive well-posed interface conditions that ensure the conservation of energy, mass, and linear momentum at the continuous level. At the discrete level, following \cite{duru2022conservative}, we construct spatial derivative operators that satisfy the summation by parts property (SBP) in a discontinuous Galerkin spectral element framework. Then, we use the Simultaneous Approximation Term (SAT) method \cite{carpenter1994time} to weakly impose interface and boundary conditions. This SBP-SAT approach enables us to prove that the semi-discrete numerical scheme satisfies discrete energy estimates analogous to the continuous energy estimates necessary for the well-posedness of the IBVP. The semi-discrete numerical approximation is integrated in time using the classical fourth order accurate explicit Runge-Kutta method.  Our proposed numerical scheme combines key ideas  from the SBP finite difference method, the spectral element method, and the discontinuous Galerkin method.  We perform detailed numerical experiments to verify the theoretical analysis, showing convergence of numerical errors, conservation properties of the method, and demonstrate the effectiveness of the high order numerical method in resolving highly oscillatory dispersive waves. The results obtained from the linear analysis can be applied to the nonlinear problem. This will be reported in a forthcoming work.  

 The paper is organized as follows: In section 2 we introduce integration by parts identities to be mimicked by the spatial discrete derivative operators. In section 3 the linearized Serre equations are introduced, and we  perform continuous analysis, proving well-posedness for the initial value problem (IVP) and the IBVP. In section 4 we derive the DGSEM and prove numerical stability. Discrete error estimates are derived in section 5. Numerical experiments are presented in section 6 verifying the theoretical analysis. In section 7, we draw conclusions and suggest directions for future work.

\section{Preliminaries}
We begin by introducing some notations. Let $u$ and $v$ be real-valued functions defined in an interval domain $\Omega = [x_L, x_R]$. The standard $L^2$\hyp{}inner product and its associated norm are denoted by
\begin{equation*}\label{eq:L2_inner_product}
    (u,v)_{\Omega} = \int_{\Omega} u(x)v(x) \: dx \quad \text{and} \quad \|u\|_{L^2(\Omega)}^2 = (u,u)_{\Omega}
\end{equation*}
respectively. Assuming that $u$ and $v$ are sufficiently smooth, that is $u, v \in C^{p}(\Omega)$ for some $p\ge 3$, the integration-by-parts principles, for first, second and third derivatives, yield
\begin{equation} \label{ibp_x}
    \left( u, \frac{dv}{dx} \right)_{\Omega} = \left.uv\right|^{x=x_R}_{x=x_L} - \left(\frac{du}{dx},v \right)_{\Omega},
\end{equation}
\begin{equation} \label{ibp_xx}
    \left( u, \frac{d^2v}{dx^2} \right)_{\Omega} = \left.u \frac{dv}{dx}  \right|^{x=x_R}_{x=x_L} - \left(\frac{du}{dx}, \frac{dv}{dx} \right)_{\Omega},
\end{equation}
\begin{equation} \label{ibp_xxx}
    \left( u, \frac{d^3v}{dx^3} \right)_{\Omega} = \left.u \frac{d^2v}{dx^2}\right|^{x=x_R}_{x=x_L} -\frac{1}{2} \left.\frac{du}{dx} \frac{dv}{dx}\right|^{x=x_R}_{x=x_L} +\frac{1}{2}\left(\frac{d^2u}{dx^2}, \frac{dv}{dx}\right)_{\Omega}- \frac{1}{2}\left(\frac{du}{dx}, \frac{d^2v}{dx^2}\right)_{\Omega}.
\end{equation}
In particular, if $u = v$ then \eqref{ibp_x}, \eqref{ibp_xxx} yield
\begin{equation} \label{firstorderidentity}
    \left( u, \frac{du}{dx} \right)_{\Omega} = \left.\frac{u^2}{2} \right|^{x=x_R}_{x=x_L},
\end{equation}
\begin{equation} \label{thirdorderidentity}
\left( u, \frac{d^3 u}{d x^3} \right)_{\Omega} = \left. \left(u \frac{d^2 u}{d x^2} - \frac{1}{2}\left( \frac{d u}{d x}\right)^2 \right)  \right|^{x=x_R}_{x=x_L},
\end{equation}
respectively.

The identities derived in this section are key ingredients for  the continuous and  discrete analysis performed in the next sections. In \cref{SBPsection}, we will describe how to construct spatial operators that mimic these identities at the discrete level. This will enable us to prove discrete counterparts of the results obtained in the continuous analysis.

\section{Linearized Serre equations}
\label{sec:main}
Linearizing \eqref{serre} around the constant mean height $H > 0$ and constant velocity $U$ gives the linearized Serre equations
\begin{subequations} \label{linearizedserre}
\begin{align}
    &\frac{\partial h}{\partial t} + \frac{\partial }{\partial x}\left(H  u + U  h\right) = 0, \label{linearizedserre1} \\
    &\frac{\partial u}{\partial t} + \frac{\partial }{\partial x}\left(g h + U  u - \frac{H^2U}{3}\frac{\partial^2u}{\partial x^2} - \frac{H^2}{3}\frac{\partial^2 u}{\partial x \partial t}\right) = 0, \: x \in \Omega, \: t \geq 0, \label{linearizedserre2}
\end{align}
\end{subequations}
where $h$ and $u$ denote the perturbed height and velocity respectively. 
At $t=0$, we augment \eqref{linearizedserre} with sufficiently smooth initial conditions
\begin{align}\label{eq:initial_condition}
h(x, 0) = f_h(x), \quad u(x, 0) = f_u(x),
\end{align}
where the functions $f_h$ and $f_u$ are compactly supported in $\Omega$.  

\subsection{Well-posedness of the IVP}
Let us now consider the IVP \eqref{linearizedserre}--\eqref{eq:initial_condition} and investigate the well-posedness of the model. For several problems, such as the linear shallow water equations, the systems of PDEs have the form  $q_t + D q =0 $,  where the differential  operator $D$ depends only on the spatial derivatives $\partial /\partial x$. In these settings, the semi-boundedness\footnote{The differential operator $D$ is semi-bounded in the function space $\mathbb{V}$ if  $\left(q, Dq\right)_{\Omega} \ge \alpha \|q\|^2_{L^2(\Omega)}$ for all $q \in \mathbb{V}$, where $\alpha \in \mathbb{R}$ is a constant independent of $q$.} of $D$ ensures the well-posedness of the IVP \cite{gustafsson1995time}. However, for the Serre equations \eqref{linearizedserre}, and specifically in \eqref{linearizedserre2}, the flux contains higher order and mixed space-time derivatives of $u$. It is not obvious how to eliminate the time derivative from the spatial operator in \eqref{linearizedserre2} without introducing auxiliary variables \cite{ZHAO2020109729} or other conservative variables \cite{zoppou2017numerical,pitt2022numerical,pitt2021solving,pitt2018behaviour}.

In order to prove the well-posedness of the IVP \eqref{linearizedserre}-\eqref{eq:initial_condition}, we will bound the solution in the energy norm defined by 
\begin{equation} \label{energy}
    E(t) = \frac{g}{2} \|h\|^2_{L^2(\Omega)}  + \frac{H}{2} \| u \|^2_{L^2(\Omega)}  +  \frac{H^3}{6} \left\|\frac{\partial u}{\partial x}\right\|^2_{L^2(\Omega)}.
\end{equation}
The energy norm $ E(t)$ is a quasi $H^1$-norm, and will bound the $L^2$-norm of the height $h$ and the $H^1$-norm of the velocity $u$.

We begin with the definition
\begin{definition}
The IVP \eqref{linearizedserre}-\eqref{eq:initial_condition} is well-posed if there is a unique solution satisfying the energy estimate
\begin{align*}
    E(t) \le K e^{\alpha t} E(0),
\end{align*}
    for some constants $\alpha $ and $K >0$ that are independent of the initial conditions \eqref{eq:initial_condition}, where $E(t)$ is defined in \eqref{energy}.
\end{definition}
Then, we introduce the lemma which relates the rate of change of the energy to boundary terms.
\begin{lemma} \label{theoremenergy} 
The linearized Serre equations \eqref{linearizedserre} satisfy the energy equation
\begin{equation} \label{energyequation}
    \frac{dE}{dt} = \left.\operatorname{BT}\right|^{x=x_R}_{x=x_L},
\end{equation}
where $\operatorname{BT}|^{x=x_R}_{x=x_L}=\operatorname{BT}|_{x=x_R} - \operatorname{BT}|_{x=x_L}$ and the boundary term $\operatorname{BT}$ is given by
\begin{align}
\begin{split} \label{boundaryterm}
\operatorname{BT}(h, u) = &-gHhu -\frac{gU}{2}h^2 - \frac{HU}{2}u^2 
-\frac{H^3U}{6}\left( \frac{\partial u}{\partial x}\right)^2 +\frac{H^3U}{3}\left( u\frac{\partial^2 u}{\partial x^2}\right)
+\frac{H^3}{3}\left(u\frac{\partial^2 u}{\partial x \partial t}\right).
\end{split}
\end{align}
\end{lemma}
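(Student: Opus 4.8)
The plan is to differentiate the energy \eqref{energy} in time, substitute the linearized Serre equations \eqref{linearizedserre1}--\eqref{linearizedserre2} to eliminate the time derivatives, and then convert every remaining interior integral into boundary data using the integration-by-parts identities \eqref{ibp_x} and \eqref{firstorderidentity}. Assuming the solution is smooth enough that all derivatives below exist and differentiation under the integral sign is justified, we begin from
\begin{equation*}
\frac{dE}{dt} = g\left(h, \frac{\partial h}{\partial t}\right)_{\Omega} + H\left(u, \frac{\partial u}{\partial t}\right)_{\Omega} + \frac{H^3}{3}\left(\frac{\partial u}{\partial x}, \frac{\partial^2 u}{\partial x \partial t}\right)_{\Omega},
\end{equation*}
where the last term comes from $\frac{d}{dt}\big(\tfrac{H^3}{6}\|\partial u/\partial x\|^2_{L^2(\Omega)}\big)$.

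The step I expect to require the most care is the third, mixed space-time, term, which one might fear is intractable without auxiliary variables. Introduce the momentum flux $w := g h + U u - \frac{H^2 U}{3}\frac{\partial^2 u}{\partial x^2} - \frac{H^2}{3}\frac{\partial^2 u}{\partial x \partial t}$, so that \eqref{linearizedserre2} reads $\partial u/\partial t = -\partial w/\partial x$. Substituting this into $H(u, \partial u/\partial t)_{\Omega}$ and integrating by parts once with \eqref{ibp_x} gives
\begin{equation*}
H\left(u, \frac{\partial u}{\partial t}\right)_{\Omega} = -H\left.\left(u\, w\right)\right|^{x=x_R}_{x=x_L} + g H\left(\frac{\partial u}{\partial x}, h\right)_{\Omega} + H U\left(\frac{\partial u}{\partial x}, u\right)_{\Omega} - \frac{H^3 U}{3}\left(\frac{\partial u}{\partial x}, \frac{\partial^2 u}{\partial x^2}\right)_{\Omega} - \frac{H^3}{3}\left(\frac{\partial u}{\partial x}, \frac{\partial^2 u}{\partial x \partial t}\right)_{\Omega}.
\end{equation*}
The crucial observation is that the last term here cancels exactly the mixed space-time term in $dE/dt$, so the mixed derivative disappears from the interior. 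Moreover $(\partial u/\partial x, u)_{\Omega}$ and $(\partial u/\partial x, \partial^2 u/\partial x^2)_{\Omega}$ are pure boundary terms by \eqref{firstorderidentity} (the latter applied with $\partial u/\partial x$ in place of $u$), equal to $\tfrac12[u^2]^{x=x_R}_{x=x_L}$ and $\tfrac12[(\partial u/\partial x)^2]^{x=x_R}_{x=x_L}$ respectively.

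Finally, substituting $\partial h/\partial t = -\partial_x(H u + U h)$ from \eqref{linearizedserre1} into $g(h, \partial h/\partial t)_{\Omega}$ and using \eqref{firstorderidentity} on the $(h, \partial h/\partial x)_{\Omega}$ piece produces $-gH(h, \partial u/\partial x)_{\Omega} - \tfrac{gU}{2}[h^2]^{x=x_R}_{x=x_L}$. The two interior cross terms $-gH(h, \partial u/\partial x)_{\Omega}$ and $+gH(\partial u/\partial x, h)_{\Omega}$ cancel by symmetry of the $L^2$-inner product — this is exactly why \eqref{energy} is the correct energy functional — so no interior terms survive. Collecting the remaining boundary contributions, expanding $-H(u w)$ explicitly and combining the two $u^2$ terms, gives precisely $\operatorname{BT}(h,u)$ as in \eqref{boundaryterm}, establishing \eqref{energyequation}. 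The only remaining work is routine bookkeeping of signs and of the constants $g$, $H$, $U$; note in particular that no third-derivative identity such as \eqref{thirdorderidentity} is needed for this step, since the regrouping above only ever pairs first and second derivatives.
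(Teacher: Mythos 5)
Your argument is correct and lands exactly on \eqref{boundaryterm}; all the cancellations you identify (the mixed space--time interior term against $\frac{d}{dt}\big(\tfrac{H^3}{6}\|\partial u/\partial x\|^2_{L^2(\Omega)}\big)$, and the $\pm gH(h,\partial u/\partial x)_\Omega$ cross terms) are the same mechanisms the paper relies on, and your final bookkeeping ($-HU u^2+\tfrac{HU}{2}u^2=-\tfrac{HU}{2}u^2$, etc.) checks out. The route differs from the paper's in one organizational respect: you integrate by parts \emph{once}, via \eqref{ibp_x}, against the entire momentum flux $w$, and then reduce the surviving interior terms using only \eqref{firstorderidentity} (applied to $u$ and to $\partial u/\partial x$); the paper instead distributes the spatial derivative first and invokes the specialized identities \eqref{thirdorderidentity} for $\left(u,\partial^3 u/\partial x^3\right)_\Omega$ and \eqref{secondorderidentity} for $\left(u,\partial^3 u/\partial x^2\partial t\right)_\Omega$. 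Your version is marginally leaner at the continuous level, and your closing remark that \eqref{thirdorderidentity} is not needed is accurate. What the paper's organization buys is a term-by-term template for the discrete analysis: \eqref{secondorderidentity} and \eqref{thirdorderidentity} are precisely the identities mimicked by the SBP relations \eqref{sbpsecond} and \eqref{sbpthird} (and by Lemma \ref{interfacesatlemma2}), so the continuous proof as written transfers almost verbatim to the stability proofs of Theorems \ref{discreteenergyconservationtheorem}--\ref{numboundarythm2}, where the flux cannot be treated as a single object quite so cleanly. Both derivations are valid proofs of the lemma.
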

\begin{proof}
   We multiply \eqref{linearizedserre1} by $gh$ and \eqref{linearizedserre2} by $H u $,  and integrate over $\Omega$. We have
\begin{align*}
    &\frac{g}{2}\frac{d}{dt}  \|h\|^2_{L^2(\Omega)} + gH\left( h, \frac{\partial u}{\partial x} \right)_{\Omega} + gU\left( h, \frac{\partial h}{\partial x} \right)_{\Omega} = 0,  \\
    &\frac{H}{2}\frac{d}{dt}  \|u\|^2_{L^2(\Omega)}+ \left(Hu, \frac{\partial }{\partial x}\left(g h + U  u - \frac{H^2U}{3}\frac{\partial^2u}{\partial x^2} - \frac{H^2}{3}\frac{\partial^2 u}{\partial x \partial t}\right)\right)_{\Omega} = 0. 
\end{align*}
Substituting $v = {\partial u}/{\partial t}$ in \eqref{ibp_xx} gives the identity
\begin{equation} \label{secondorderidentity}
\left(u, \frac{\partial^3 u}{\partial x^2 \partial t} \right)_{\Omega} = \left. \left( u \frac{\partial^2 u}{\partial x \partial t}\right) \right|^{x=x_R}_{x=x_L} - \frac{1}{2} \frac{d}{dt} \left\| \frac{\partial u}{\partial x} \right\|^2_{L^2(\Omega)}.
\end{equation}
Hence using \eqref{ibp_x}, \eqref{firstorderidentity}, \eqref{thirdorderidentity} and \eqref{secondorderidentity} yields
\begin{subequations}
\begin{align}
    &\frac{g}{2}\frac{d}{dt}  \|h\|^2_{L^2(\Omega)} - gH\left( \frac{\partial h}{\partial x}, u \right)_{\Omega} =  -\left.\left(gHhu +\frac{gU}{2}h^2\right) \right|^{x=x_R}_{x=x_L}, \label{linearizedserre1_proof_0} \\
&\frac{d}{dt}\left( \frac{H}{2} \| u \|^2_{L^2(\Omega)} + \frac{H^3}{6} \left\|\frac{\partial u}{\partial x}\right\|^2_{L^2(\Omega)} \right) +  gH\left(u, \frac{\partial h}{\partial x} \right)_{\Omega}
\label{linearizedserre2_proof_0}
\\
&=-\left.\left(\frac{HU}{2}u^2 + \frac{H^3U}{6}\left( \frac{\partial u}{\partial x}\right)^2-\frac{H^3U}{3}\left( u\frac{\partial^2 u}{\partial x^2}\right)-\frac{H^3}{3}\left(u\frac{\partial^2 u}{\partial x \partial t}\right)\right)\right|^{x=x_R}_{x=x_L},
\nonumber
\end{align}
\end{subequations}
where the boundary terms have been moved to the right hand side. Summing  \eqref{linearizedserre1_proof_0} and \eqref{linearizedserre2_proof_0} together completes the proof of the lemma.
\end{proof}
The following theorem proves the well-posedness of the IVP \eqref{linearizedserre}--\eqref{eq:initial_condition}.
\begin{theorem}\label{theorem:well-posednes_ivp}
Consider the linearized Serre equations \eqref{linearizedserre} with the initial conditions \eqref{eq:initial_condition} in the domain $\Omega = [x_L,x_R]$ subject to the periodic boundary conditions
\begin{align}\label{eq:peridic_boundary_conditions}
\begin{split}
   &h(x_L,t) = h(x_R,t), \quad u(x_L,t) = u(x_R,t), \\
    &\frac{\partial u}{\partial x}(x_L,t) = \frac{\partial u}{\partial x}(x_R,t), \quad \frac{\partial^2 u}{\partial x^2}(x_L,t) = \frac{\partial^2 u}{\partial x^2}(x_R,t).
\end{split}
\end{align}
The energy $E(t)$ defined in \eqref{energy} is conserved, that is 
\begin{equation*}
E(t) = E(0), \quad \forall t \ge 0. 
\end{equation*}
\end{theorem}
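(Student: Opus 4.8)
The plan is to read the result off directly from \cref{theoremenergy}: the periodic boundary conditions force the boundary term $\operatorname{BT}$ in \eqref{boundaryterm} to take the same value at $x=x_L$ and at $x=x_R$, so the right-hand side of the energy equation \eqref{energyequation} vanishes and $E$ is constant in time.

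In more detail, first I would invoke \cref{theoremenergy} to write $dE/dt = \operatorname{BT}|_{x=x_R} - \operatorname{BT}|_{x=x_L}$, with $\operatorname{BT}$ given by \eqref{boundaryterm} (this is legitimate since the solution is assumed sufficiently smooth, so that the integration-by-parts identities used to derive \eqref{energyequation} apply and $E(t)$ is differentiable). Inspecting \eqref{boundaryterm}, every summand is a product of the quantities $h$, $u$, $\partial u/\partial x$, $\partial^2 u/\partial x^2$, and $\partial^2 u/\partial x \partial t$ evaluated at a boundary point. The first four are periodic by the hypotheses \eqref{eq:peridic_boundary_conditions}. For the mixed space-time derivative, I would differentiate the identity $\partial u/\partial x(x_L,t) = \partial u/\partial x(x_R,t)$ from \eqref{eq:peridic_boundary_conditions} with respect to $t$ — valid because this boundary condition holds for all $t \ge 0$ and $u$ is smooth — to obtain $\partial^2 u/\partial x \partial t(x_L,t) = \partial^2 u/\partial x \partial t(x_R,t)$. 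Hence each of the six terms in $\operatorname{BT}$ agrees at $x_L$ and $x_R$, so $\operatorname{BT}|_{x=x_R} - \operatorname{BT}|_{x=x_L} = 0$ and therefore $dE/dt = 0$ for every $t \ge 0$.

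Integrating in time then gives $E(t) = E(0)$ for all $t \ge 0$, which is the claimed energy conservation; in particular the IVP is well-posed in the sense of the preceding definition with $K = 1$ and $\alpha = 0$. There is no genuine obstacle here beyond bookkeeping: the only point that warrants a moment's care is that \eqref{eq:peridic_boundary_conditions} lists $\partial^2 u/\partial x^2$ but not $\partial^2 u/\partial x \partial t$, so one must observe that periodicity of the latter follows by time-differentiating the periodicity of $\partial u/\partial x$; after that, the conclusion is a direct substitution into \eqref{boundaryterm}.
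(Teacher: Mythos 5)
Your proof is correct and follows the same route as the paper: invoke Lemma \ref{theoremenergy}, observe that periodicity makes $\left.\operatorname{BT}\right|^{x=x_R}_{x=x_L}$ vanish, and integrate $dE/dt=0$ in time. Your explicit handling of the mixed derivative $\partial^2 u/\partial x\,\partial t$ — obtaining its periodicity by differentiating the periodicity of $\partial u/\partial x$ in $t$ — is a small but worthwhile detail that the paper's proof leaves implicit.
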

\begin{proof}
From Lemma \ref{theoremenergy}, we have 
   \begin{equation*} 
    \frac{dE}{dt} = \left.\operatorname{BT}\right|^{x=x_R}_{x=x_L},
\end{equation*}
where the boundary term $\operatorname{BT}$ is given by \eqref{boundaryterm}. We impose the periodic boundary conditions \eqref{eq:peridic_boundary_conditions}, and hence the boundary terms cancel out, which gives $\operatorname{BT}|^{x=x_R}_{x=x_L} =0$. It follows that 
${dE}/{dt} = 0$. Integrating in time gives $E(t) = E(0)$ for all $t \ge 0$. 
\end{proof}
Theorem \ref{theorem:well-posednes_ivp} also holds for Cauchy problems with compactly supported initial data \eqref{eq:initial_condition}.

\subsection{Well-posed boundary conditions} \label{wellposedbcs}
The main aim of this section is to formulate well-posed boundary conditions for the linearized Serre equations \eqref{linearizedserre}. This is a necessary step towards accurate and reliable computations of the solution of \eqref{linearizedserre}. Recall that for many linear PDEs, such as the linear shallow water equations, the systems have the form  $q_t + D q =0 $, where the differential operator $D$ depends only on the spatial derivative $\partial /\partial x$. In these situations, the maximally semi-boundedness\footnote{The differential operator $D$ is maximally semi-bounded if it is semi-bounded in the function space $\mathbb{V}$ but not in any space with fewer boundary conditions \cite{gustafsson1995time}.} of $D$ will ensure the well-posedness of the IBVP \cite{gustafsson1995time}.

For the linearized Serre equations \eqref{linearizedserre}, when boundary conditions are introduced, we will aim to bound the solutions in the energy norm $E(t)$. From the energy equation \eqref{energyequation}, it suffices to ensure that the boundary term is always nonpositive, namely $\left.\operatorname{BT}\right|^{x=x_R}_{x=x_L} \leq 0$. Hence, we are looking for a minimal number of boundary conditions, with homogeneous boundary data, at $x =x_L$ and $x=x_R$ such that $\left.\operatorname{BT}\right|^{x=x_R}_{x=x_L} \leq 0$.

We begin by rewriting the boundary term \eqref{boundaryterm} in a matrix form as follows:
\begin{equation*}
    \operatorname{BT} = \mathbf{v}^{\top} \mathbf{A} \mathbf{v},
\end{equation*}
where $\mathbf{v} = \left[h, u, \dfrac{\partial u}{\partial x}, \dfrac{\partial^2 u}{\partial x^2}, \dfrac{\partial^2 u}{\partial x \partial t}\right]^\top$ and $\mathbf{A}$ is the symmetric matrix defined by
\begin{equation} \label{matrixA}
    \mathbf{A} = \left[\begin{matrix}- \frac{g U}{2} & - \frac{g H}{2} & 0 & 0 & 0\\- \frac{g H}{2} & - \frac{H U}{2} & 0 & \frac{H^{3} U}{6} & \frac{H^{3}}{6}\\0 & 0 & - \frac{H^{3} U}{6} & 0 & 0\\0 & \frac{H^{3} U}{6} & 0 & 0 & 0\\0 & \frac{H^{3}}{6} & 0 & 0 & 0\end{matrix}\right].
\end{equation}
Using eigen-decomposition, see \cref{eigendecompositionappendix}, we have
\begin{equation} \label{boundarytermw}
    \operatorname{BT} = \mathbf{v}^{\top} \mathbf{A} \mathbf{v}= \mathbf{w}^{\top} \mathbf{\Lambda} \mathbf{w} = \sum_{i=1}^5 	\lambda_i {w}_i^2,
\end{equation}
where the vector $\mathbf{w}$ is given by
\begin{equation}\label{eq:vector_w}
    \mathbf{w}  =  \left[\begin{matrix}
    \dfrac{\partial^2 u}{\partial x^2}\\[1em]
    \dfrac{\partial u}{\partial x}\\[1em]
    h\\
    \dfrac{1}{C^+}\left( 2 H^{2} U \dfrac{\partial^2 u}{\partial x^2} + 2 H^{2} \dfrac{\partial^2 u}{\partial x \partial t} - 6 g h -  \left(3 U + \sqrt{4 H^{4} + 9 U^{2}}\right)u\right)\\[1em]
    \dfrac{1}{C^-}\left(2 H^{2} U \dfrac{\partial^2 u}{\partial x^2} + 2 H^{2} \dfrac{\partial^2 u}{\partial x \partial t} - 6 g h - \left(3 U - \sqrt{4 H^{4} + 9 U^{2}}\right) u\right)\end{matrix}\right]
\end{equation}
with positive constants $C^{\pm} = \sqrt{4 H^{4} + \left(3 U \pm \sqrt{4 H^{4} + 9 U^{2}}\right)^{2}}$. The corresponding eigenvalues are
\begin{align}
\begin{split}
\label{eq:eigenvalues}
 &\lambda_1 = 0, \quad  \lambda_2 = - \frac{H^{3} U}{6},  \quad \lambda_3 = - \frac{g U}{2},\\
 &\lambda_4 = - \frac{H U}{4} - \frac{H \sqrt{4 H^{4} + 9 U^{2}}}{12}, \quad \lambda_5 = - \frac{H U}{4} + \frac{H \sqrt{4 H^{4} + 9 U^{2}}}{12}.
 \end{split}
\end{align}

Without loss of generality, we only consider boundary conditions for $U = 0$ and $U > 0$. 
\subsubsection{Case 1: $U = 0$}
When $U = 0$, there are only two nonzero eigenvalues $\lambda_4 = -\lambda_5 = -\dfrac{H^3}{6}$, and hence we have
\begin{equation*} \label{boundarytermw_U_0}
     \left.\operatorname{BT}\right|^{x=x_R}_{x=x_L} = \left(\lambda_4 {w}_4^2 + \lambda_5 {w}_5^2\right)|_{x=x_R} - \left(\lambda_4 {w}_4^2 + \lambda_5 {w}_5^2\right)|_{x=x_L}.
\end{equation*}
Since $\lambda_4 < 0$ and $\lambda_5 > 0$, we need one boundary condition at $x = x_L$ and one boundary condition at $x = x_R$.
We set the linear boundary conditions
\begin{equation} \label{bczero}
    {w}_4(x_L,t) = \alpha {w}_5(x_L,t) \quad \text{and} \quad {w}_5(x_R,t) = \beta {w}_4(x_R,t),
\end{equation}
where the constants $\alpha, \beta$ must be chosen such that $\left.\operatorname{BT}\right|^{x=x_R}_{x=x_L} \le 0$ to ensure well-posedness.
\begin{lemma}\label{Lemma:BT_U0}
Consider the boundary conditions \eqref{bczero} and the boundary term $\operatorname{BT}$ defined by \eqref{boundarytermw} with $U =0$. If $-1 \leq \alpha \leq 1$ and $-1 \leq \beta \leq 1$ then $\left.\operatorname{BT}\right|^{x=x_R}_{x=x_L} \le 0$.
\end{lemma}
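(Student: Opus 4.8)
The plan is to show directly that $\operatorname{BT}|_{x=x_R} \le 0$ and $-\operatorname{BT}|_{x=x_L} \le 0$ separately, so that their sum $\left.\operatorname{BT}\right|^{x=x_R}_{x=x_L} \le 0$. With $U = 0$, the expansion \eqref{boundarytermw} collapses to $\operatorname{BT} = \lambda_4 w_4^2 + \lambda_5 w_5^2$ with $\lambda_4 = -H^3/6 < 0$ and $\lambda_5 = H^3/6 > 0$, since the other eigenvalues vanish. So the only danger at each endpoint is the positive term $\lambda_5 w_5^2$, which the boundary conditions \eqref{bczero} are designed to control.

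First I would treat the right boundary $x = x_R$. There we impose $w_5(x_R,t) = \beta\, w_4(x_R,t)$, so
\begin{equation*}
\operatorname{BT}|_{x=x_R} = \lambda_4 w_4^2 + \lambda_5 w_5^2 = \frac{H^3}{6}\left(-w_4^2 + \beta^2 w_4^2\right) = \frac{H^3}{6}\left(\beta^2 - 1\right) w_4^2 \le 0
\end{equation*}
provided $\beta^2 \le 1$, i.e. $-1 \le \beta \le 1$. Next I would treat the left boundary $x = x_L$, where we impose $w_4(x_L,t) = \alpha\, w_5(x_L,t)$; since $\operatorname{BT}$ appears with a minus sign there, I need $-\operatorname{BT}|_{x=x_L} \le 0$, i.e. $\operatorname{BT}|_{x=x_L} \ge 0$. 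Substituting gives
\begin{equation*}
\operatorname{BT}|_{x=x_L} = \lambda_4 w_4^2 + \lambda_5 w_5^2 = \frac{H^3}{6}\left(-\alpha^2 w_5^2 + w_5^2\right) = \frac{H^3}{6}\left(1 - \alpha^2\right) w_5^2 \ge 0
\end{equation*}
provided $\alpha^2 \le 1$, i.e. $-1 \le \alpha \le 1$. Adding the two contributions yields $\left.\operatorname{BT}\right|^{x=x_R}_{x=x_L} = \operatorname{BT}|_{x=x_R} - \operatorname{BT}|_{x=x_L} \le 0$, which is the claim.

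This argument is essentially a one-line computation once the eigen-decomposition \eqref{boundarytermw} is in hand, so there is no real obstacle — the only subtlety is bookkeeping the sign convention: the boundary condition at $x_L$ must make $\operatorname{BT}$ \emph{nonnegative} there (because of the minus sign in $\operatorname{BT}|^{x=x_R}_{x=x_L}$), whereas at $x_R$ it must make $\operatorname{BT}$ \emph{nonpositive}. This is exactly why the roles of $w_4$ and $w_5$ are swapped between \eqref{bczero} at the two endpoints: at $x_R$ the "outgoing" component associated with the positive eigenvalue $\lambda_5$ is slaved to the component with the negative eigenvalue $\lambda_4$, and vice versa at $x_L$. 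I would also note in passing that $H > 0$ by assumption, so $\lambda_5 = H^3/6 > 0$ strictly, and the inequalities are genuine; the endpoint cases $\alpha, \beta = \pm 1$ correspond to the energy-conservative (reflecting) boundary conditions.
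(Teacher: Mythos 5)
Your argument is correct and is essentially the paper's own proof: after noting that $U=0$ forces $\lambda_2=\lambda_3=0$ and $\lambda_4=-\lambda_5=-H^3/6$, both you and the paper substitute the boundary conditions \eqref{bczero} to obtain $\left.\operatorname{BT}\right|^{x=x_R}_{x=x_L}=\frac{H^3}{6}\left((\beta^2-1)w_4^2(x_R,t)+(\alpha^2-1)w_5^2(x_L,t)\right)\le 0$. Splitting the computation endpoint by endpoint rather than writing the difference all at once is only a presentational difference.
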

\begin{proof}
  Since $U = 0$ implies $\lambda_2 = \lambda_3 = 0$ and $\lambda_4 = -\lambda_5 = -\dfrac{H^3}{6}$, applying the boundary conditions \eqref{bczero} yields
\begin{align*}
\left.\operatorname{BT}\right|^{x=x_R}_{x=x_L} &= \frac{H^3}{6}\left({w}_5^2(x_R,t) - {w}_4^2(x_R,t) + {w}_4^2(x_L,t) - {w}_5^2(x_L,t)  \right) \\
&=  \frac{H^3}{6}\left((\beta^2 - 1){w}_4^2(x_R,t) + (\alpha^2 - 1){w}_5^2(x_L,t)\right) \leq 0,
\end{align*}
for all $-1 \leq \alpha \leq 1$ and $-1 \leq \beta \leq 1$.
\end{proof}
We state the following theorem which proves the well-posedness of the IBVP \eqref{linearizedserre}, \eqref{eq:initial_condition} and \eqref{bczero}.
\begin{theorem}\label{theorem:well-posednes_ibvp_u0}
Consider the linearized Serre equations \eqref{linearizedserre} with $U = 0$ subject to the initial conditions \eqref{eq:initial_condition}  and the boundary conditions \eqref{bczero}. If $-1 \leq \alpha \leq 1$ and $-1 \leq \beta \leq 1$ then 
the energy $E(t)$ defined in \eqref{energy} is bounded by the energy of the initial data, that is 
\begin{equation*}
E(t) \le E(0), \quad \forall t \ge 0.
\end{equation*}
\end{theorem}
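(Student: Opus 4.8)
The plan is to combine the energy equation from Lemma~\ref{theoremenergy} with the pointwise estimate from Lemma~\ref{Lemma:BT_U0}. First I would invoke Lemma~\ref{theoremenergy}, which gives
\begin{equation*}
\frac{dE}{dt} = \left.\operatorname{BT}\right|^{x=x_R}_{x=x_L},
\end{equation*}
with $\operatorname{BT}$ as in \eqref{boundaryterm}. Since we are in the case $U=0$, I would pass to the diagonalized form \eqref{boundarytermw}--\eqref{eq:eigenvalues}, where the eigenvalues $\lambda_1=\lambda_2=\lambda_3=0$ and $\lambda_4=-\lambda_5=-H^3/6$, so that only $w_4$ and $w_5$ contribute to the boundary term.

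Next I would apply the boundary conditions \eqref{bczero}, namely $w_4(x_L,t)=\alpha\, w_5(x_L,t)$ and $w_5(x_R,t)=\beta\, w_4(x_R,t)$. By Lemma~\ref{Lemma:BT_U0}, under the hypotheses $-1\le\alpha\le 1$ and $-1\le\beta\le 1$ this yields
\begin{equation*}
\frac{dE}{dt} = \left.\operatorname{BT}\right|^{x=x_R}_{x=x_L}
= \frac{H^3}{6}\left((\beta^2-1)\, w_4^2(x_R,t) + (\alpha^2-1)\, w_5^2(x_L,t)\right) \le 0.
\end{equation*}
Thus $E$ is nonincreasing in time.

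Finally, integrating the inequality $dE/dt\le 0$ over $[0,t]$ gives $E(t)\le E(0)$ for all $t\ge 0$, which is the claimed bound. Together with $E(t)$ being a quasi-$H^1$ norm (bounding $\|h\|_{L^2(\Omega)}$ and $\|u\|_{H^1(\Omega)}$), this establishes well-posedness of the IBVP in the sense of the Definition, with constants $\alpha=0$ and $K=1$.

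This argument is essentially immediate once the two lemmas are in hand; there is no real obstacle. The only point requiring a small remark is an existence/uniqueness statement — the energy bound as written controls growth and gives uniqueness (by linearity, applying the estimate to the difference of two solutions with identical data forces $E\equiv 0$, hence $h\equiv 0$ and $u_x\equiv 0$, and then $u\equiv 0$ using that $u$ vanishes somewhere on $\Omega$ by the boundary structure or by an extra argument), while existence can be taken for granted at the formal level on which the paper operates, or deferred to the semi-discrete approximation constructed later. I would therefore keep the proof short: cite Lemma~\ref{theoremenergy}, cite Lemma~\ref{Lemma:BT_U0}, conclude $dE/dt\le 0$, and integrate.
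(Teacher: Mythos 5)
Your proposal is correct and follows essentially the same route as the paper: invoke Lemma~\ref{theoremenergy} for the energy identity, apply Lemma~\ref{Lemma:BT_U0} to conclude $dE/dt \le 0$ under the stated restrictions on $\alpha$ and $\beta$, and integrate in time. The additional remarks on uniqueness and existence go beyond what the paper's proof addresses but do not change the argument.
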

\begin{proof}
From Lemmas \ref{theoremenergy} and \ref{Lemma:BT_U0}, if $-1 \leq \alpha \leq 1$ and $-1 \leq \beta \leq 1$, we have 
   \begin{equation*} 
    \frac{dE}{dt} = \left.\operatorname{BT}\right|^{x=x_R}_{x=x_L} \le 0.
\end{equation*}
Time integration gives 
\begin{equation*}
E(t) \le E(0), \quad \forall t \ge 0.
\end{equation*}
\end{proof}

\subsubsection{Case 2: $U > 0$}
When $U > 0$, we have $\lambda_2, \lambda_3, \lambda_4 < 0$ and $\lambda_5 > 0$. In this case, the boundary term is
\begin{align*}
 \left.\operatorname{BT}\right|_{x=x_L}^{x=x_R} =  &\left.\left( 	\lambda_2{w}_2^2 + \lambda_3{w}_3^2 + \lambda_4{w}_4^2 + \lambda_5{w}_5^2\right)\right|_{x=x_R} -  \left.\left( 	\lambda_2{w}_2^2 + \lambda_3{w}_3^2 + \lambda_4{w}_4^2 + \lambda_5{w}_5^2\right)\right|_{x=x_L}.
\end{align*}
Thus, we need three boundary conditions at the inflow boundary $x =x_L$ and one boundary condition at the outflow boundary $x =x_R$.
We set the linear boundary conditions, 
\begin{align}\label{eq:bc_Up}
\begin{split}
    &{w}_2(x_L,t) = \alpha_2 {w}_5(x_L,t), \quad {w}_3(x_L,t) = \alpha_3 {w}_5(x_L,t), \quad {w}_4(x_L,t) = \alpha_4 {w}_5(x_L,t),\\
    &{w}_5(x_R,t) = \beta_2 {w}_2(x_R,t)  + \beta_3 {w}_3(x_R,t) + \beta_4 {w}_4(x_R,t).
\end{split}
\end{align}
The constants $\alpha_j$, $\beta_j$ must be chosen such that $\left.\operatorname{BT}\right|^{x=x_R}_{x=x_L} \le 0$.
\begin{lemma}\label{Lemma:BT_Up}
Consider the boundary conditions \eqref{eq:bc_Up}, and let the symmetric matrix
\begin{equation*}\label{eq:symmetric_boundary_matrix}
\mathbf{R} = \left[\begin{matrix}
\lambda_2+\beta_2^2\lambda_5 & \beta_2\beta_3\lambda_5 & \beta_2\beta_4\lambda_5 \\
\beta_2\beta_3\lambda_5 & \lambda_3+\beta_3^2\lambda_5 & \beta_3\beta_4\lambda_5 \\
\beta_2\beta_4\lambda_5 & \beta_3\beta_4\lambda_5 & \lambda_4+\beta_4^2\lambda_5 
\end{matrix}\right].
\end{equation*}
If the constants $\alpha_j$ and $\beta_j$ (for $j = 2, 3, 4$) are chosen such that $\mathbf{v}^{\top}\mathbf{R}\mathbf{v} \le 0$ for all $\mathbf{v}\in \mathbb{R}^3$ and 
$\lambda_2 \alpha_2^2 + \lambda_3 \alpha_3^2 + \lambda_4 \alpha_4^2 + \lambda_5 \geq 0$, then $\left.\operatorname{BT}\right|^{x=x_R}_{x=x_L} \le 0$, where $\operatorname{BT}$ is the boundary term defined by $\eqref{boundarytermw}$.
\end{lemma}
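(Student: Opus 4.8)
The plan is to substitute the boundary conditions \eqref{eq:bc_Up} directly into $\left.\operatorname{BT}\right|^{x=x_R}_{x=x_L} = \operatorname{BT}|_{x=x_R} - \operatorname{BT}|_{x=x_L}$, using the diagonalized form \eqref{boundarytermw}. Since $\lambda_1 = 0$, only the four terms with $i = 2,3,4,5$ contribute, so at each endpoint $\operatorname{BT} = \lambda_2 w_2^2 + \lambda_3 w_3^2 + \lambda_4 w_4^2 + \lambda_5 w_5^2$.

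First I would treat the outflow boundary $x = x_R$, eliminating $w_5$ via $w_5 = \beta_2 w_2 + \beta_3 w_3 + \beta_4 w_4$. Expanding $\lambda_5 w_5^2 = \lambda_5(\beta_2 w_2 + \beta_3 w_3 + \beta_4 w_4)^2$ and collecting the resulting homogeneous quadratic in $(w_2, w_3, w_4)$, a short computation shows it equals $\mathbf{v}^{\top}\mathbf{R}\mathbf{v}$ with $\mathbf{v} = [w_2, w_3, w_4]^{\top}$ evaluated at $x = x_R$: the diagonal entries $\lambda_j + \beta_j^2\lambda_5$ and the symmetric off-diagonal entries $\beta_i\beta_j\lambda_5$ of $\mathbf{R}$ are exactly the coefficients that appear (the cross-terms $2\beta_i\beta_j\lambda_5 w_i w_j$ being split symmetrically off the diagonal). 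Hence $\operatorname{BT}|_{x=x_R} = \mathbf{v}^{\top}\mathbf{R}\mathbf{v} \le 0$ by the first hypothesis.

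Next I would treat the inflow boundary $x = x_L$, substituting $w_2 = \alpha_2 w_5$, $w_3 = \alpha_3 w_5$, $w_4 = \alpha_4 w_5$ into $\lambda_2 w_2^2 + \lambda_3 w_3^2 + \lambda_4 w_4^2 + \lambda_5 w_5^2$ and factoring out $w_5^2$, which gives $\operatorname{BT}|_{x=x_L} = (\lambda_2\alpha_2^2 + \lambda_3\alpha_3^2 + \lambda_4\alpha_4^2 + \lambda_5)\, w_5^2 \ge 0$ by the second hypothesis. Combining, $\left.\operatorname{BT}\right|^{x=x_R}_{x=x_L} = \operatorname{BT}|_{x=x_R} - \operatorname{BT}|_{x=x_L} \le 0$, which is the claim.

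There is no real obstacle: the argument is a direct substitution followed by a sign check. The only points needing care are bookkeeping — matching the expanded outflow quadratic term by term with $\mathbf{R}$ (keeping the factors of $2$ correctly absorbed into the off-diagonal entries), and noting that the $x = x_L$ contribution enters with a minus sign, which is precisely why the opposite inequality ($\ge 0$) is required there.
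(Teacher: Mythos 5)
Your proposal is correct and follows essentially the same route as the paper: substitute the outflow condition at $x=x_R$ to identify $\operatorname{BT}|_{x=x_R}$ with $\mathbf{v}^{\top}\mathbf{R}\mathbf{v}$, substitute the inflow conditions at $x=x_L$ to factor out $w_5^2$ times $\lambda_2\alpha_2^2+\lambda_3\alpha_3^2+\lambda_4\alpha_4^2+\lambda_5$, and conclude by the two sign hypotheses. No gaps.
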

\begin{proof}
  Recall that when $U > 0$, we have $\lambda_2, \lambda_3, \lambda_4 < 0$ and $\lambda_5 > 0$, and  the boundary term is
\begin{align*}
 \left.\operatorname{BT}\right|_{x=x_L}^{x=x_R} =  &\left.\left( 	\lambda_2{w}_2^2 + \lambda_3{w}_3^2 + \lambda_4{w}_4^2 + \lambda_5{w}_5^2\right)\right|_{x=x_R} \\
 &- \left.\left( 	\lambda_2{w}_2^2 + \lambda_3{w}_3^2 + \lambda_4{w}_4^2 + \lambda_5{w}_5^2\right)\right|_{x=x_L}.
\end{align*}
Applying the boundary conditions \eqref{eq:bc_Up} gives
\begin{align*}
    \left.\operatorname{BT}\right|^{x=x_R}_{x=x_L} = &\left.\mathbf{v}^{\top}\mathbf{R}\mathbf{v}\right|_{x=x_R} -\left(\lambda_2 \alpha_2^2 + \lambda_3 \alpha_3^2 + \lambda_4 \alpha_4^2 + \lambda_5\right) w_5^2(x_L,t), 
\end{align*}
where $\mathbf{v} = \left[{w}_2, {w}_3, {w}_4\right]^{\top}$.
Thus if $\mathbf{v}^{\top}\mathbf{R}\mathbf{v} \le 0$ and 
$\lambda_2 \alpha_2^2 + \lambda_3 \alpha_3^2 + \lambda_4 \alpha_4^2 + \lambda_5 \geq 0$, then we have $\left.\operatorname{BT}\right|^{x=x_R}_{x=x_L} \le 0$.
\end{proof}
As in the previous case, the following theorem shows the well-posedness of the IBVP \eqref{linearizedserre}, \eqref{eq:initial_condition} and   \eqref{eq:bc_Up}.
\begin{theorem}\label{theorem:well-posednes_ibvp_UP}
Consider the linearized Serre equations \eqref{linearizedserre} with $U > 0$ subject to the initial conditions \eqref{eq:initial_condition} and the boundary conditions \eqref{eq:bc_Up}. If the constants $\alpha_j$ and $\beta_j$ (for $j = 2, 3, 4$) are chosen such that $\mathbf{v}^{\top}\mathbf{R}\mathbf{v} \le 0$ for all $\mathbf{v}\in \mathbb{R}^3$ and 
$\lambda_2 \alpha_2^2 + \lambda_3 \alpha_3^2 + \lambda_4 \alpha_4^2 + \lambda_5 \geq 0$, then the energy $E(t)$ defined in \eqref{energy} is bounded by the energy of the initial data, that is 
\begin{equation*}
E(t) \le E(0), \quad \forall t \ge 0.    
\end{equation*}
\end{theorem}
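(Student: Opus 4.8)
The plan is to mirror the structure of the proof of Theorem~\ref{theorem:well-posednes_ibvp_u0}, since all the analytical heavy lifting has already been done in Lemmas~\ref{theoremenergy} and~\ref{Lemma:BT_Up}. The argument is essentially a two-line composition of these two results, followed by a time integration. First I would invoke Lemma~\ref{theoremenergy}, which gives the energy identity
\begin{equation*}
\frac{dE}{dt} = \left.\operatorname{BT}\right|^{x=x_R}_{x=x_L}
\end{equation*}
with $\operatorname{BT}$ as in \eqref{boundaryterm}, equivalently written in the diagonalized form \eqref{boundarytermw}. Since $U>0$, we are in Case~2, so the boundary conditions \eqref{eq:bc_Up} apply and Lemma~\ref{Lemma:BT_Up} is directly applicable.

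Next, under the stated hypotheses on $\alpha_j$ and $\beta_j$ (that $\mathbf{v}^\top \mathbf{R}\mathbf{v}\le 0$ for all $\mathbf{v}\in\mathbb{R}^3$ and that $\lambda_2\alpha_2^2 + \lambda_3\alpha_3^2 + \lambda_4\alpha_4^2 + \lambda_5 \ge 0$), Lemma~\ref{Lemma:BT_Up} yields $\left.\operatorname{BT}\right|^{x=x_R}_{x=x_L} \le 0$. Combining this with the energy identity gives $dE/dt \le 0$ for all $t\ge 0$. Integrating this inequality in time over $[0,t]$ then produces $E(t) \le E(0)$ for all $t\ge 0$, which is exactly the claimed bound. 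In the language of the well-posedness definition, this corresponds to $\alpha = 0$ and $K = 1$, so the IBVP is well-posed.

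Honestly, there is no substantive obstacle here — the hard work was front-loaded into Lemma~\ref{Lemma:BT_Up} (and, implicitly, into the eigen-decomposition in \cref{eigendecompositionappendix} that produced \eqref{boundarytermw}–\eqref{eq:eigenvalues}). The only thing worth a sentence of care is confirming that the decomposition of $\left.\operatorname{BT}\right|^{x=x_R}_{x=x_L}$ used in Lemma~\ref{Lemma:BT_Up} into the $x=x_R$ quadratic form $\mathbf{v}^\top\mathbf{R}\mathbf{v}$ and the $x=x_L$ scalar term $-(\lambda_2\alpha_2^2+\lambda_3\alpha_3^2+\lambda_4\alpha_4^2+\lambda_5)w_5^2$ is indeed what the boundary conditions \eqref{eq:bc_Up} give: at $x_L$ the three inflow conditions eliminate $w_2,w_3,w_4$ in favor of $w_5$, while at $x_R$ the single outflow condition eliminates $w_5$ in favor of $(w_2,w_3,w_4)$. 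Beyond that, the proof is just "Lemma~\ref{theoremenergy} + Lemma~\ref{Lemma:BT_Up} + integrate," and I would present it in three or four lines.
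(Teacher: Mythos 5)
Your proposal is correct and matches the paper's proof essentially verbatim: the paper also simply combines Lemma~\ref{theoremenergy} with Lemma~\ref{Lemma:BT_Up} to obtain $dE/dt = \left.\operatorname{BT}\right|^{x=x_R}_{x=x_L} \le 0$ under the stated hypotheses, and then integrates in time. No gaps; the remark about how the boundary conditions split $\operatorname{BT}$ between the two endpoints is already the content of Lemma~\ref{Lemma:BT_Up} and need not be re-verified here.
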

\begin{proof}
  Using Lemmas \ref{theoremenergy} and \ref{Lemma:BT_Up}, if $\mathbf{v}^{\top}\mathbf{R}\mathbf{v} \le 0$ for all $\mathbf{v}\in \mathbb{R}^3$ and 
$\lambda_2 \alpha_2^2 + \lambda_3 \alpha_3^2 + \lambda_4 \alpha_4^2 + \lambda_5 \geq 0$, we obtain 
\begin{equation*} 
    \frac{dE}{dt} = \left.\operatorname{BT}\right|^{x=x_R}_{x=x_L} \le 0.
\end{equation*}
Time integration gives 
\begin{equation*}
E(t) \le E(0), \quad \forall t \ge 0.    
\end{equation*}
The proof is complete.
\end{proof}

\begin{remark}
When $U < 0$, the situation reverses since $x = x_L$ becomes the outflow boundary and $x = x_R$ becomes the inflow boundary. The signs of the eigenvalues also change, that is $\lambda_2, \lambda_3, \lambda_5 > 0$ and $\lambda_4 < 0$. The boundary term is
\begin{align*}
\left.\operatorname{BT}\right|_{x=x_L}^{x=x_R} = &\left.\left( 	\lambda_2{w}_2^2 + \lambda_3{w}_3^2 + \lambda_4{w}_4^2 + \lambda_5{w}_5^2\right)\right|_{x=x_R} \\
    &- \left.\left( 	\lambda_2{w}_2^2 + \lambda_3{w}_3^2 + \lambda_4{w}_4^2 + \lambda_5{w}_5^2\right)\right|_{x=x_L}.
\end{align*}
We need one boundary condition at the outflow boundary $x = x_L$ and three boundary conditions at the inflow boundary $x = x_R$. Similar to the previous case where $U >0$, we set the linear boundary conditions
\begin{align}\label{eq:bc_Un}
\begin{split}
    &{w}_4(x_L,t) = \beta_2 {w}_2(x_L,t)  + \beta_3 {w}_3(x_L,t) + \beta_5 {w}_5(x_R,t),\\
    &{w}_2(x_R,t) = \alpha_2 {w}_4(x_R,t), \quad {w}_3(x_R,t) = \alpha_3 {w}_4(x_R,t), \quad {w}_5(x_R,t) = \alpha_5 {w}_4(x_R,t),
\end{split}
\end{align}
where the constants $\alpha_j$ and $\beta_j$ must be chosen such that $\left.\operatorname{BT}\right|^{x=x_R}_{x=x_L} \le 0$. 
Carrying out a similar analysis as in \textbf{Case 2} will prove the well-posedness of the corresponding IBVP, \eqref{linearizedserre}--\eqref{eq:initial_condition} and \eqref{eq:bc_Un}. 
\end{remark}

\subsection{Well-posed interface conditions} \label{sec:interfaceconditions}
We now derive well-posed interface conditions that will be used to couple adjacent elements together. These interface conditions should enable conservative and stable numerical treatments. 

We begin by splitting the spatial domain $\Omega$ into two subdomains $\Omega^{-}$ and $\Omega^{+}$ with an interface at $x = 0$. In particular, we have $\Omega = \Omega^{-} \bigcup \Omega^{+}$, where $\Omega^{-} = [x_L, 0]$, $\Omega^{+} = [0, x_R]$, $x_L < 0$ and $x_R > 0$. The solutions of the linearized Serre equations in the subdomains $\Omega^{-}$ and $\Omega^{+}$ are denoted with the superscripts $-$ and $+$ respectively. Hence, we have
\begin{subequations} \label{linearizedserre_minus}
\begin{align}
    &\frac{\partial h^{-}}{\partial t} + \frac{\partial }{\partial x}F_h(h^{-}, u^{-}) = 0, \label{linearizedserre1_minus} \\
    &\frac{\partial u^{-}}{\partial t} + \frac{\partial }{\partial x}F_u(h^{-}, u^{-}) = 0, \: x \in \Omega^{-}, \: t \geq 0, \label{linearizedserre2_minus}
\end{align}
\end{subequations}
\begin{subequations} \label{linearizedserre_plus}
\begin{align}
    &\frac{\partial h^{+}}{\partial t} + \frac{\partial }{\partial x}F_h(h^{+}, u^{+}) = 0, \label{linearizedserre1_plus} \\
    &\frac{\partial u^{+}}{\partial t} + \frac{\partial }{\partial x}F_u(h^{+}, u^{+}) = 0, \: x \in \Omega^{+}, \: t \geq 0, \label{linearizedserre2_plus}
\end{align}
\end{subequations}
where the flux functions are given by
\begin{align*}
    F_h(h, u) = Uh + Hu, \quad F_u(h, u) = g h + U  u - \frac{H^2U}{3}\frac{\partial^2u}{\partial x^2} - \frac{H^2}{3}\frac{\partial^2 u}{\partial x \partial t}.
\end{align*}

At the interface $x =0$, we define the jump of a scalar/vector field $v$ across the interface by
$$
\lJump v \rJump: = v^+ - v^-, \quad x = 0.
$$
We are now looking for a minimal number of conditions connecting the problems \eqref{linearizedserre_minus}-\eqref{linearizedserre_plus} across the interface such that the resulting coupled problem is conservative and energy stable.
\subsubsection{Conservative interface conditions}
Let $\phi_h, \phi_u \in C^{\infty}(\Omega)$ be smooth functions in $\Omega$. We multiply \eqref{linearizedserre1_minus} and \eqref{linearizedserre2_minus} with $g\phi_h$ and $H\phi_u$ respectively, and integrate over $\Omega^-$. We have
{
$$
\left(g \phi_h,  \frac{\partial h^{-}}{\partial t}\right)_{\Omega^{-}} +  \left(g \phi_h,  \frac{\partial }{\partial x}F_h(h^{-}, u^{-})\right)_{\Omega^{-}} = 0, \quad \left(H \phi_u,  \frac{\partial u^{-}}{\partial t}\right)_{\Omega^{-}} +   \left(H \phi_u,  \frac{\partial }{\partial x}F_u(h^{-}, u^{-})\right)_{\Omega^{-}} = 0.
$$
}
Integration by parts gives
{
\begin{align}
&\left(g \phi_h,  \frac{\partial h^{-}}{\partial t}\right)_{\Omega^{-}} -  \left(g \frac{\partial \phi_h }{\partial x},  F_h(h^{-}, u^{-})\right)_{\Omega^{-}} =-\left.\phi_{h}gF_h(h^{-}, u^{-})\right|_{x=x_L}^{x=0}, \label{hminus}\\
&\left(H \phi_u,  \frac{\partial u^{-}}{\partial t}\right)_{\Omega^{-}} -   \left(H \frac{\partial  \phi_u}{\partial x},  F_u(h^{-}, u^{-})\right)_{\Omega^{-}} = -\left.\phi_{u}HF_u(h^{-}, u^{-})\right|_{x=x_L}^{x=0}. \label{uminus}
\end{align}
}
Applying the same procedure to \eqref{linearizedserre1_plus} and \eqref{linearizedserre2_plus} yields
{
\begin{align}
&\left(g \phi_h,  \frac{\partial h^{+}}{\partial t}\right)_{\Omega^{+}} -  \left(g \frac{\partial \phi_h }{\partial x},  F_h(h^{+}, u^{+})\right)_{\Omega^{+}} =-\left.\phi_{h}gF_h(h^{+}, u^{+})\right|_{x=0}^{x=x_R}, \label{hplus} \\
&\left(H \phi_u,  \frac{\partial u^{+}}{\partial t}\right)_{\Omega^{+}} -   \left(H \frac{\partial  \phi_u}{\partial x},  F_u(h^{+}, u^{+})\right)_{\Omega^{+}} = -\left.\phi_{u}HF_u(h^{+}, u^{+})\right|_{x=0}^{x=x_R}. \label{uplus}
\end{align}
}
Summing equations \eqref{hminus} and \eqref{hplus} gives
\begin{equation*}
\left(g \phi_h,  \frac{\partial h}{\partial t}\right)_{\Omega}  -  \left(g \frac{\partial \phi_h }{\partial x},  F_h(h, u)\right)_{\Omega} = \phi_{h}g\lJump F_h(h, u) \rJump   - \left.\phi_{h}gF_h(h, u)\right|_{x=x_L}^{x=x_R},
\end{equation*}
where we have collected the interface terms in the right hand side. Similarly, adding up \eqref{uplus} and \eqref{uminus} gives
\begin{equation*}
\left(H \phi_u,  \frac{\partial u}{\partial t}\right)_{\Omega}  -  \left(H \frac{\partial  \phi_u}{\partial x},  F_u(h, u)\right)_{\Omega}
=  \phi_{u}H\lJump F_u(h, u) \rJump  - \left.\phi_{u}HF_u(h, u)\right|_{x=x_L}^{x=x_R}.
\end{equation*}
Thus, if we require that the jump of the flux functions vanish, that is 
$$
\lJump F_h(h, u) \rJump = 0, \quad \lJump F_u(h, u) \rJump = 0,
$$
we obtain 
\begin{equation*}
\begin{split}
&\left(g \phi_h,  \frac{\partial h}{\partial t}\right)_{\Omega}  -  \left(g \frac{\partial \phi_h }{\partial x},  F_h(h, u)\right)_{\Omega} - 
=  - \left.\phi_{h}gF_h(h, u)\right|_{x=x_L}^{x=x_R}, \\
&\left(H \phi_u,  \frac{\partial u}{\partial t}\right)_{\Omega} - \left(H \frac{\partial  \phi_u}{\partial x},  F_u(h, u)\right)_{\Omega}
=    -\left.\phi_{u}HF_u(h, u)\right|_{x=x_L}^{x=x_R}.
\end{split}
\end{equation*}
If $\left.F_h(h, u)\right|_{x=x_L}^{x=x_R} = 0$ and $\left.F_u(h, u)\right|_{x=x_L}^{x=x_R} = 0$, choosing $\phi_{h} = \phi_{u} = 1$ yields
\begin{equation*}
\left(g,   \frac{\partial h}{\partial t}\right)_{\Omega} = 0, \quad \left(H, \frac{\partial u}{\partial t}\right)_{\Omega} = 0.
\end{equation*}
These equations imply that the total mass $\left(g,   h\right)_{\Omega}$ and the total linear momentum $\left(H, u\right)_{\Omega}$ are conserved.
\begin{theorem}\label{theorem:conservation_principle}
Consider the Serre equations \eqref{linearizedserre_minus}--\eqref{linearizedserre_plus} in the split domain $\Omega = \Omega^{-} \bigcup \Omega^{+}$, and assume that the jump of the flux functions vanish at the interface $x=0$, that is
$$
\lJump F_h(h, u) \rJump = 0, \quad \lJump F_u(h, u) \rJump = 0.
$$
If $\left.F_h(h, u)\right|_{x=x_L}^{x=x_R} = 0$ and $\left.F_u(h, u)\right|_{x=x_L}^{x=x_R} = 0$, then
  we have
  \begin{equation*}
\frac{d}{dt}\left(g,   h\right)_{\Omega} = 0, \quad \frac{d}{dt}\left(H, u\right)_{\Omega} = 0.
\end{equation*}
\end{theorem}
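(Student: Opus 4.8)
The plan is to integrate the split evolution equations directly over each subdomain and add the results; this is precisely the test-function derivation already carried out in the text, specialised to the constant test functions $\phi_h = \phi_u = 1$.

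First I would integrate the continuity equations \eqref{linearizedserre1_minus} and \eqref{linearizedserre1_plus} over $\Omega^- = [x_L,0]$ and $\Omega^+ = [0,x_R]$ respectively. Since the solution is smooth in time, the time derivative commutes with the integral, and the flux term is handled by the fundamental theorem of calculus (equivalently, by \eqref{ibp_x} with a constant test function), leaving $F_h$ evaluated at the endpoints of each subinterval. Adding the two identities, the values at the shared point $x=0$ combine---with opposite signs, since $x=0$ is the right endpoint of $\Omega^-$ and the left endpoint of $\Omega^+$---into $F_h(h^+,u^+)|_{x=0} - F_h(h^-,u^-)|_{x=0} = \lJump F_h(h,u)\rJump$, which vanishes by the conservative interface condition; the remaining endpoint contributions are exactly $\left.F_h(h,u)\right|_{x=x_L}^{x=x_R}$, which vanishes by hypothesis. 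Hence $\frac{d}{dt}\int_\Omega h\,dx = 0$, and multiplying by the constant $g$ gives $\frac{d}{dt}(g,h)_\Omega = 0$.

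The argument for $\frac{d}{dt}(H,u)_\Omega = 0$ is identical, integrating the momentum equations \eqref{linearizedserre2_minus} and \eqref{linearizedserre2_plus} and multiplying by the constant $H$. The only point to note is that $F_u$ contains the higher-order terms $\partial^2 u/\partial x^2$ and $\partial^2 u/\partial x\partial t$; but in conservative form these sit under a single $\partial/\partial x$, so one integration by parts suffices and the second- and third-order identities \eqref{ibp_xx}, \eqref{ibp_xxx} are not needed. The interface contribution is $\lJump F_u(h,u)\rJump = 0$ and the endpoint contribution is $\left.F_u(h,u)\right|_{x=x_L}^{x=x_R} = 0$, which finishes the proof.

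There is no real obstacle: the computation is routine, and the only care needed is the sign bookkeeping when assembling the two interface contributions into a single jump, together with the standing smoothness assumption guaranteeing that $F_h$ and $F_u$ are well-defined at $x=x_L$, $x=x_R$, and $x=0$. Alternatively, the statement reads off directly from the two displayed identities established immediately before it: setting $\phi_h=\phi_u=1$ makes the volume flux terms drop because $\partial\phi_h/\partial x = \partial\phi_u/\partial x = 0$, and the assumed flux-jump and boundary conditions annihilate the right-hand sides.
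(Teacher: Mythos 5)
Your proposal is correct and follows essentially the same route as the paper: the paper's own argument is exactly the test-function derivation preceding the theorem (multiply by $g\phi_h$, $H\phi_u$, integrate by parts on each subdomain, sum so the interface contributions assemble into the flux jumps, then set $\phi_h=\phi_u=1$), which is what you describe both directly and in your closing alternative. Your sign bookkeeping at $x=0$ and the observation that $F_u$ sits under a single $\partial/\partial x$ so only one integration by parts is needed are both consistent with the paper.
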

The theorem holds for Cauchy problems with compactly supported data and in a bounded domain with the periodic boundary conditions \eqref{eq:peridic_boundary_conditions}. If the numerical interface treatment satisfies a discrete analogue of Theorem \ref{theorem:conservation_principle}, we say that the numerical method is conservative.

\subsubsection{Energy conserving interface conditions} 
A second requirement of the interface conditions is that they should ensure energy stability for the coupled system. Hence, we need to determine the interface conditions such that the total energy in the domain is conserved. To this end, we introduce the following lemma.
\begin{lemma} \label{theoremenergy_interface} 
Consider the Serre equations \eqref{linearizedserre_minus}--\eqref{linearizedserre_plus}, and denote the energy in the subdomains $\Omega^{\pm}$ by $E^{\pm}(t)$. Considering only boundary contributions from the interface $x = 0$, we have the energy equation
\begin{equation*}
    \frac{d}{dt}(E^{-}(t) + E^{+}(t) ) = \left.\operatorname{IT}\right|_{x=0},
\end{equation*}
where 
$$
\operatorname{IT} = -\sum_{j=1}^5\lambda_j\left(({w}_j^{+})^2-({w}_j^{-})^2\right),
$$
and the vector $\mathbf{w}$ and the eigenvalues $\lambda_j$ are given by \eqref{eq:vector_w} and \eqref{eq:eigenvalues} respectively.
\end{lemma}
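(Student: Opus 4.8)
The plan is to run the energy argument of Lemma~\ref{theoremenergy} on each subdomain separately and then add the two resulting identities. The first observation is that the proof of Lemma~\ref{theoremenergy} only uses the integration-by-parts identities \eqref{ibp_x}, \eqref{firstorderidentity}, \eqref{thirdorderidentity} and \eqref{secondorderidentity} on an interval, so it applies verbatim with $\Omega$ replaced by $\Omega^{-}=[x_L,0]$ or by $\Omega^{+}=[0,x_R]$. Hence, multiplying \eqref{linearizedserre1_minus} by $gh^{-}$ and \eqref{linearizedserre2_minus} by $Hu^{-}$ and integrating over $\Omega^{-}$ (and likewise for the $+$ system over $\Omega^{+}$) I obtain
\begin{align*}
\frac{dE^{-}}{dt} &= \operatorname{BT}(h^{-},u^{-})\big|_{x=0} - \operatorname{BT}(h^{-},u^{-})\big|_{x=x_L}, \\
\frac{dE^{+}}{dt} &= \operatorname{BT}(h^{+},u^{+})\big|_{x=x_R} - \operatorname{BT}(h^{+},u^{+})\big|_{x=0},
\end{align*}
with $\operatorname{BT}$ the boundary term \eqref{boundaryterm}.

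Next I would add these two equations and discard the outer boundary terms at $x=x_L$ and $x=x_R$, since the lemma concerns only the contribution from the shared interface $x=0$; what survives is
\begin{equation*}
\frac{d}{dt}\big(E^{-}(t)+E^{+}(t)\big) = \operatorname{BT}(h^{-},u^{-})\big|_{x=0} - \operatorname{BT}(h^{+},u^{+})\big|_{x=0} =: \operatorname{IT}\big|_{x=0}.
\end{equation*}
The only point that needs care here is the sign bookkeeping: $x=0$ is the \emph{right} endpoint of $\Omega^{-}$, so it enters $\operatorname{BT}|^{x=x_R}_{x=x_L}$ with a $+$ sign, while it is the \emph{left} endpoint of $\Omega^{+}$ and enters with a $-$ sign; thus the interface term is the $-$-side boundary term minus the $+$-side boundary term.

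Finally I would invoke the eigen-decomposition \eqref{boundarytermw}, $\operatorname{BT}=\mathbf{v}^{\top}\mathbf{A}\mathbf{v}=\sum_{i=1}^{5}\lambda_i w_i^2$, applied on each side with the characteristic variables $\mathbf{w}^{\pm}$ built from $h^{\pm},u^{\pm}$ and their derivatives as in \eqref{eq:vector_w}; this substitution is purely formal since the matrix $\mathbf{A}$ in \eqref{matrixA} is the same on both sides. Inserting it into $\operatorname{IT}$ gives
\begin{equation*}
\operatorname{IT}\big|_{x=0} = \sum_{j=1}^{5}\lambda_j\big(w_j^{-}\big)^2 - \sum_{j=1}^{5}\lambda_j\big(w_j^{+}\big)^2 = -\sum_{j=1}^{5}\lambda_j\Big(\big(w_j^{+}\big)^2-\big(w_j^{-}\big)^2\Big),
\end{equation*}
which is exactly the claimed identity. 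I do not expect a genuine obstacle: the substance is entirely in Lemma~\ref{theoremenergy} and the eigen-decomposition, and the new content is just the observation that both transfer to each subinterval together with the interface sign convention noted above.
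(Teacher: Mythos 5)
Your proposal is correct and follows exactly the paper's argument: apply Lemma~\ref{theoremenergy} on each subdomain, sum the two energy identities, keep only the interface contributions (with the sign bookkeeping you describe, $x=0$ being the right endpoint of $\Omega^{-}$ and the left endpoint of $\Omega^{+}$), and substitute the eigen-decomposition $\operatorname{BT}=\sum_{j}\lambda_j w_j^2$ on each side. No gaps.
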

\begin{proof}
  Applying Lemma \ref{theoremenergy} to the equations \eqref{linearizedserre_minus} and \eqref{linearizedserre_plus} in $\Omega^{-}$ and $\Omega^{+}$ respectively gives the energy equations
  \begin{equation*} 
    \frac{dE^{-}}{dt} = \left.\operatorname{BT}^{-}\right|^{x=0}_{x=x_L} 
    \quad \text{and} \quad 
    \frac{dE^{+}}{dt} = \left.\operatorname{BT}^{+}\right|^{x=x_R}_{x=0}.
\end{equation*}
Here, the boundary terms $\operatorname{BT}^{\pm} = \operatorname{BT}(h^{\pm}, u^{\pm})$ are given by \eqref{boundarytermw}.
Summing the energy equations together and considering only boundary contributions from the interface $x =0$, we have
\begin{equation*} 
    \frac{d}{dt}(E^{-}(t) + E^{+}(t) ) = \left.\operatorname{BT}^{-}\right|_{x=0}-\left.\operatorname{BT}^{+}\right|_{x=0} =  -\left.\sum_{j=1}^5\lambda_j\left(({w}_j^{+})^2-({w}_j^{-})^2\right)\right|_{x =0}.
\end{equation*}
This completes the proof.
\end{proof}

Recall that we have four non-zero eigenvalues, namely $\lambda_2, \lambda_3, \lambda_4, \lambda_5$. It follows that we need four interface conditions to ensure well-posedness. The interface conditions should be imposed such that the interface terms vanish, that is $\left.\operatorname{IT}\right|_{x=0} = 0$, thus ensuring energy stability. 

Since $\lambda_1 = 0$, the interface terms can be written as
$$
\left.\operatorname{IT}\right|_{x=0} = \left. -\sum_{j=2}^5\lambda_j\left(\left({w}_j^{+}-{w}_j^{-}\right)\left({w}_j^{+}+{w}_j^{-}\right)\right)\right|_{x =0}.
$$
Hence, the interface conditions 
\begin{align}\label{eq:interface_conditions_1}
\lJump {w}_j \rJump =0, \quad j = 2, 3, 4, 5,
\end{align}
yields $\left.\operatorname{IT}\right|_{x=0} = 0$.
The interface conditions \eqref{eq:interface_conditions_1} can be equivalently rewritten as
\begin{align}\label{eq:interface_conditions}
\begin{split}
   &\lJump h \rJump =0, \quad \lJump u \rJump =0, \quad \lJump\frac{\partial u}{\partial x}\rJump = 0, \quad \lJump\frac{\partial^2 u}{\partial x^2}\rJump =0,
\end{split}
\end{align}
which also ensure that the interface term vanishes $\left.\operatorname{IT}\right|_{x=0} = 0$, and imply that the jump of the flux functions vanish, that is
$$
\lJump F_h(h, u) \rJump = U\lJump h\rJump + H\lJump u \rJump = 0, \quad \lJump F_u(h, u) \rJump = g \lJump h\rJump + U  \lJump u \rJump - \frac{H^2U}{3}\lJump\frac{\partial^2u}{\partial x^2}\rJump - \frac{H^2}{3}\frac{\partial}{\partial t}\lJump\frac{\partial u}{\partial x}\rJump = 0.
$$
The following theorem states that the interface conditions \eqref{eq:interface_conditions} ensure energy conservation.
\begin{theorem} \label{theoremenergy_interface_stability} 
Consider the Serre equations \eqref{linearizedserre_minus}--\eqref{linearizedserre_plus} subject to the interface conditions \eqref{eq:interface_conditions}, and let $E^{\pm}(t)$ denote the energy in the subdomains $\Omega^{\pm}$. Considering only boundary contributions from the interface $x = 0$, the total energy in the domain is conserved, that is
\begin{equation*} \label{energyequation_interface_stability}
    E^{-}(t) + E^{+}(t)  =  E^{-}(0) + E^{+}(0), \quad \forall t\ge 0.
\end{equation*}
\end{theorem}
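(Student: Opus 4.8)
The plan is to combine \textbf{Lemma \ref{theoremenergy_interface}} with the interface conditions \eqref{eq:interface_conditions}, which are equivalent to the characteristic-variable conditions \eqref{eq:interface_conditions_1}, and then integrate in time. First I would invoke \textbf{Lemma \ref{theoremenergy_interface}} to write
\begin{equation*}
\frac{d}{dt}\bigl(E^{-}(t) + E^{+}(t)\bigr) = \left.\operatorname{IT}\right|_{x=0}, \qquad \operatorname{IT} = -\sum_{j=1}^{5}\lambda_j\bigl((w_j^{+})^2 - (w_j^{-})^2\bigr).
\end{equation*}
Since $\lambda_1 = 0$, only the indices $j = 2,3,4,5$ contribute, and $\operatorname{IT}$ can be factored as $-\sum_{j=2}^{5}\lambda_j\,\lJump w_j\rJump\,(w_j^{+}+w_j^{-})$ at $x=0$, as already noted in the excerpt just before the statement.

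Next I would observe that the interface conditions \eqref{eq:interface_conditions} — continuity of $h$, $u$, $\partial u/\partial x$, and $\partial^2 u/\partial x^2$ across $x=0$ — are exactly equivalent to $\lJump w_j \rJump = 0$ for $j = 2,3,4,5$, because each $w_j$ (for $j=2,\dots,5$) is a fixed linear combination of these quantities and $\partial^2 u/\partial x\partial t$, and continuity of $\partial u/\partial x$ in time implies continuity of $\partial^2 u/\partial x\partial t$ as well; the map from $(h,u,\partial u/\partial x,\partial^2 u/\partial x^2,\partial^2 u/\partial x\partial t)$ to $(w_1,\dots,w_5)$ is the invertible eigenvector transformation from \cref{eigendecompositionappendix}. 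Substituting $\lJump w_j \rJump = 0$ into the factored form of $\operatorname{IT}$ gives $\left.\operatorname{IT}\right|_{x=0} = 0$, hence $\frac{d}{dt}(E^{-}(t)+E^{+}(t)) = 0$.

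Finally, integrating this identity from $0$ to $t$ yields $E^{-}(t) + E^{+}(t) = E^{-}(0) + E^{+}(0)$ for all $t \ge 0$, which is the claimed conservation of total energy. I do not anticipate a serious obstacle here: the result is essentially a corollary of \textbf{Lemma \ref{theoremenergy_interface}} plus the algebraic equivalence of the two forms of the interface conditions, which the excerpt has already spelled out. The only point requiring a little care — and worth stating explicitly — is that one should restrict attention to the interface contribution at $x=0$ as in \textbf{Lemma \ref{theoremenergy_interface}} (ignoring the outer boundary terms at $x_L$ and $x_R$, which are handled separately by the boundary-condition analysis), so that the conclusion is genuinely about the coupling at $x=0$ and does not silently assume anything about the behaviour at the physical boundaries.
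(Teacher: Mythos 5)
Your proposal is correct and follows essentially the same route as the paper: invoke Lemma \ref{theoremenergy_interface}, factor $\operatorname{IT}$ over $j=2,\dots,5$ using $\lambda_1=0$, use the equivalence of \eqref{eq:interface_conditions} with $\lJump w_j\rJump=0$ to conclude $\left.\operatorname{IT}\right|_{x=0}=0$, and integrate in time. Your explicit remark that continuity of $\partial u/\partial x$ across the interface for all $t$ yields continuity of $\partial^2 u/\partial x\partial t$ is a small but welcome addition that the paper leaves implicit.
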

\begin{proof}
Lemma \ref{theoremenergy_interface} gives the energy equation \begin{equation*} \label{energyequation_interface}
    \frac{d}{dt}(E^{-}(t) + E^{+}(t) ) = \left.\operatorname{IT}\right|_{x=0} = \left. -\sum_{j=2}^5\lambda_j\left(\left({w}_j^{+}-{w}_j^{-}\right)\left({w}_j^{+}+{w}_j^{-}\right)\right)\right|_{x =0},
\end{equation*}
where we have only considered boundary contributions from the interface $x = 0$. Since the interface conditions \eqref{eq:interface_conditions} imply \eqref{eq:interface_conditions_1}, the interface terms vanish $\left.\operatorname{IT}\right|_{x=0} = 0$, and hence
\begin{equation*} 
    \frac{d}{dt}(E^{-}(t) + E^{+}(t) ) = 0 \iff E^{-}(t) + E^{+}(t)  =  E^{-}(0) + E^{+}(0), \quad \forall t\ge 0.
\end{equation*}
The proof is complete.
\end{proof}

A stable and effective numerical method should as far as possible emulate the theoretical results established in Theorems \ref{theorem:well-posednes_ibvp_u0}, \ref{theorem:well-posednes_ibvp_UP} \ref{theorem:conservation_principle}, and \ref{theoremenergy_interface_stability}.

\section{Space discretization}

In this section, we present the DGSEM for the linearized Serre equations in a bounded domain. We will prove that the presented numerical method is conservative and stable by proving discrete analogues of Theorems \ref{theorem:well-posednes_ibvp_u0}, \ref{theorem:well-posednes_ibvp_UP} \ref{theorem:conservation_principle}, and \ref{theoremenergy_interface_stability}.

We begin by splitting the spatial domain $\Omega = [x_L,x_R]$ into $N$ uniform elements $I_k = [x_k, x_{k+1}]$ of length $\Delta x = x_{k+1}- x_{k}$, with $x_L = x_1 < x_2 < \ldots < x_{N+1} = x_R$. Each element $I_k$ can be mapped to a reference element $\widehat{\Omega} = [-1,1]$ using the following affine transformation:
\begin{equation} \label{transformation}
    \varphi_k(\xi) = x_{k} + \frac{\Delta x}{2}(\xi + 1), \quad \xi \in \widehat{\Omega} = [-1,1].
\end{equation}

Let $\mathcal{P}^P(\widehat{\Omega})$ be the space of polynomials of degree at most $P$ on $\widehat{\Omega}$. In spectral element methods, we use Lagrange polynomials 
\begin{equation*}
    \ell_j(\xi) = \prod_{\begin{smallmatrix}1\le i\le P+1\\ i\neq j\end{smallmatrix}} \frac{\xi-\xi_i}{\xi_j-\xi_i}, \quad j=1,2,\ldots,P+1,
\end{equation*}
as basis functions for the polynomial space $\mathcal{P}^P(\widehat{\Omega})$. Here, $\xi_1, \xi_2, \ldots, \xi_{P+1} \in \widehat{\Omega}$ are nodes of a Gaussian quadrature rule.
\subsection{Summation-by-parts spectral difference operators} \label{SBPsection}

We now derive discrete spatial operators that satisfy the SBP property in the reference element $\widehat{\Omega}$. We first observe that any function $u$ defined on $\widehat{\Omega}$ has the following Lagrange polynomial approximation $\widehat{u}$:
\begin{equation*} \label{polyapprox}
 \widehat{u}(\xi) = \sum_{j=1}^{P+1} u(\xi_j) \ell_j(\xi).
\end{equation*}
Hence, the weak derivative of $\widehat{u}$ is given by
\begin{equation} \label{weakderivative}
\left(\phi, \frac{d\widehat{u}}{d\xi} \right)_{\widehat{\Omega}} = \sum_{j=1}^{P+1} u(\xi_j) \left(\phi, \frac{d\ell_j}{d\xi} \right)_{\widehat{\Omega}}
\end{equation}
for all test function $\phi \in \mathcal{P}^P(\widehat{\Omega})$. Following the approach used in Galerkin spectral element methods, the test function is chosen such that $\phi = \ell_i$, $i = 1, 2, \ldots, P+1$, and we choose the Gauss-Lobatto quadrature rule. Since this quadrature rule is exact for polynomials of degree at most $2P - 1$, \eqref{weakderivative} can be written as
\begin{equation*}
\left(\ell_i, \frac{d\widehat{u}}{d\xi} \right)_{\widehat{\Omega}} = \sum_{j=1}^{P+1} u(\xi_j) \left(\ell_i, \frac{d\ell_j}{d\xi} \right)_{\widehat{\Omega}} = \sum_{j=1}^{P+1} Q_{ij} u(\xi_j),
\end{equation*}
where
\begin{equation} \label{Qmatrix}
Q_{ij} = \left(\ell_i, \frac{d\ell_j}{d\xi} \right)_{\widehat{\Omega}} = \sum_{k=1}^{P+1} \omega_k \ell_i(\xi_k) \left. \frac{d\ell_j}{d\xi} \right|_{\xi=\xi_k}.
\end{equation}
Here, $\omega_i > 0$ are the weights of the quadrature rule. Let $M = \operatorname{diag}(\omega_1, \omega_2, \ldots \allowbreak, \omega_{P+1})$ be the mass matrix, and define the discrete derivative operator $D = M^{-1}Q$. Using integration-by-parts, \eqref{Qmatrix} becomes
\begin{equation*}
Q_{ij} = -Q_{ij}^{\top} + \ell_i(1)\ell_j(1) - \ell_i(-1)\ell_j(-1),
\end{equation*}
and hence we obtain the SBP property for the first derivative operator $D$
\begin{equation} \label{sbp1}
(MD)_{ij} + (D^{\top}M)_{ij} = B_{ij},
\end{equation}
where $B = \operatorname{diag}(-1,0,0,\ldots,0,1)$. 

So far we have discussed how to derive spatial operators in the reference element $\widehat{\Omega}$. In a physical element $I_k$, the transformation \eqref{transformation} gives
\begin{equation} \label{discreteoperators}
    D_x = \frac{2}{\Delta x}D \quad \text{and} \quad M_x = \frac{\Delta x}{2} M.
\end{equation}
The operator $D_x$ is a discrete derivative operator that approximates the spatial derivative $\frac{\partial}{\partial x}$, and the mass matrix $M_x$ is an operator used to approximate integration. These operators also satisfy the SBP property
\begin{equation} \label{sbp2}
    M_x D_x + D^{\top}_x M_x = B,
\end{equation}
where $B$ is defined in \eqref{sbp1}.
We will approximate higher derivatives within a physical element using $D_x^{l} \approx {\partial^l }/{\partial x^l}$, for $l = 1, 2, 3$. 

The spatial derivative operators $D_x^{l}$ with $l = 1, 2, 3$ satisfy discrete analogues of the integration-by-parts principle \eqref{ibp_x}--\eqref{ibp_xxx}. To see this we introduce the following discrete inner product and norm
\begin{equation*}
 \langle \mathbf{u}, \mathbf{v} \rangle_{M_x}  = \mathbf{v}^{\top} M_x \mathbf{u}, \quad  \langle \mathbf{u}, \mathbf{u} \rangle_{M_x} = \| \mathbf{u} \|^2_{M_x} > 0,
\end{equation*}
for $\mathbf{u}, \mathbf{v} \in \mathbb{R}^{P+1}$. By straightforward computations using \eqref{sbp2}, we obtain the SBP properties for first, second and third derivatives as follows:
\begin{equation} \label{sbp_x}
      \left\langle \mathbf{u}, D_x \mathbf{v} \right\rangle_{M_x} = u_{P+1}v_{P+1} - u_1v_1 - \left\langle D_x \mathbf{u}, \mathbf{v} \right\rangle_{M_x},
\end{equation}
\begin{equation} \label{sbp_xx}
    \left\langle \mathbf{u}, D_x^2 \mathbf{v} \right\rangle_{M_x} = u_{P+1} \left(D_x \mathbf{v}\right)_{P+1} - u_1 \left(D_x \mathbf{v} \right)_1 - \left\langle D_x \mathbf{u}, D_x\mathbf{v} \right\rangle_{M_x},
\end{equation}
\begin{equation} \label{sbp_xxx}
\begin{split}
    \left\langle \mathbf{u}, D_x^3 \mathbf{v} \right\rangle_{M_x} &= u_{P+1} \left(D_x^2 \mathbf{v}\right)_{P+1} - u_1 \left(D_x^2 \mathbf{v} \right)_1 -\frac{1}{2}\left(\left(D_x \mathbf{u}\right)_{P+1} \left(D_x \mathbf{v}\right)_{P+1} - \left(D_x \mathbf{u}\right)_1 \left(D_x \mathbf{v} \right)_1\right)  \\
    &+\frac{1}{2}\left\langle D_x^2\mathbf{u}, D_x \mathbf{v} \right\rangle_{M_x}- \frac{1}{2}\left\langle D_x\mathbf{u}, D_x^2 \mathbf{v} \right\rangle_{M_x},
    \end{split}
\end{equation}
which are the discrete analogues of \eqref{ibp_x}--\eqref{ibp_xxx}. 

When $\mathbf{v} = \mathbf{u}$, \eqref{sbp_x} becomes
\begin{equation}
    \left\langle \mathbf{u}, D_x \mathbf{u} \right\rangle_{M_x} = \frac{1}{2}\left(u_{P+1}^2 - u_1^2\right) \label{sbpfirst}.
\end{equation}
For second and third order derivatives, substituting $\mathbf{v} = \dfrac{d\mathbf{u}}{dt}$ in \eqref{sbp_xx} and $\mathbf{v} = \mathbf{u}$ in \eqref{sbp_xxx} gives the following identities:
\begin{align}
    &\left\langle \mathbf{u}, D_x^2 \frac{d\mathbf{u}}{dt} \right\rangle_{M_x} = u_{P+1} \left(D_x \frac{d\mathbf{u}}{dt} \right)_{P+1} - u_1 \left(D_x \frac{d\mathbf{u}}{dt} \right)_1 - \left\langle D_x \mathbf{u}, D_x \frac{d\mathbf{u}}{dt} \right\rangle_{M_x} \label{sbpsecond}, \\
    &\left\langle \mathbf{u}, D_x^3 \mathbf{u} \right\rangle_{M_x} = u_{P+1} \left(D_x^2 \mathbf{u} \right)_{P+1} - u_1 \left(D_x^2 \mathbf{u} \right)_1 - \frac{1}{2}\left(D_x \mathbf{u}\right)^2_{P+1} + \frac{1}{2}\left(D_x \mathbf{u} \right)^2_1 \label{sbpthird}.
\end{align}
These identities \eqref{sbpsecond} and \eqref{sbpthird} correspond to the identities \eqref{secondorderidentity} and \eqref{thirdorderidentity} respectively.

 We conclude this section by stating a theorem regarding the accuracy of the discrete derivative operators $D_x^{l}$, which will be used later to derive error estimates.
\begin{theorem} \label{truncationerrortheorem}
Consider the discrete derivative operator $D_x$ and an element $I_k = [x_k, x_{k+1}]$ of length $\Delta x >0$. Let $x_k^{(j)} = x_k + \frac{\Delta x}{2}(\xi_j + 1)$, $j = 1,2,\ldots,P+1$ denote the Gauss-Lobatto quadrature nodes in $I_k$, with $\xi_j \in [-1,1]$, and  ${u}_j = u(x_k^{(j)})$ be the restriction of a sufficiently smooth function $u$ on the nodes. The truncation errors of the approximation of the partial derivatives ${\partial^l u}/{\partial x^l}$ are given by
\begin{align*}
&\left. D_x^{l}\mathbf{u} \right|_j = \left. \frac{\partial^l u}{\partial x^l} \right|_{x = x_k^{(j)}} + C_l \Delta x^{P+1-l} \left|\frac{\partial^{P+1}u}{\partial x^{P+1}}(\zeta_l) \right|, \quad l = 1, 2, 3,
\end{align*}
where $\zeta_l\in I_k$ and $C_l>0$ are constants independent of $\Delta x>0$.
\end{theorem}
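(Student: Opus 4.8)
The plan is to exploit the standard polynomial-interpolation error estimate on the reference element $\widehat{\Omega}=[-1,1]$, transfer it to the physical element $I_k$ via the affine map \eqref{transformation}, and then differentiate. On the reference element, for a sufficiently smooth $u$, the Lagrange interpolant $\widehat{u}$ at the $P+1$ Gauss--Lobatto nodes satisfies the classical bound $\|\widehat{u}-u\|_{W^{l,\infty}(\widehat{\Omega})}\le \widetilde{C}_l\,\|u^{(P+1)}\|_{L^\infty(\widehat{\Omega})}$ for $l=0,1,2,3$ (this follows from the Peano kernel / Rolle-type argument for the interpolation error, differentiated $l$ times, and is where the constant $\widetilde C_l$ and the value $\zeta_l$ come from). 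Since $D_x^{l}\mathbf{u}$ is by construction the exact $l$-th derivative of the interpolating polynomial evaluated at the nodes, the truncation error at node $j$ is precisely $\big(\widehat u - u\big)^{(l)}$ evaluated at the corresponding point, so the reference-element estimate gives $|D^{l}\mathbf{u}|_j - \partial_\xi^l u|_{\xi_j} = C_l\,|u^{(P+1)}(\zeta_l)|$ with $\zeta_l\in\widehat\Omega$.

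Next I would push this through the affine change of variables. Under $x=\varphi_k(\xi)=x_k+\tfrac{\Delta x}{2}(\xi+1)$ we have $\partial/\partial x=\tfrac{2}{\Delta x}\,\partial/\partial\xi$, which is exactly why \eqref{discreteoperators} sets $D_x=\tfrac{2}{\Delta x}D$; iterating, $D_x^{l}=(\tfrac{2}{\Delta x})^{l}D^{l}$ and $\partial^l/\partial x^l=(\tfrac{2}{\Delta x})^{l}\partial^l/\partial\xi^l$. The chain rule also converts $\|\partial_\xi^{P+1} u\|$ into $(\tfrac{\Delta x}{2})^{P+1}\|\partial_x^{P+1}u\|$. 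Multiplying the reference estimate by $(\tfrac{2}{\Delta x})^l$ therefore produces the factor $(\tfrac{2}{\Delta x})^l(\tfrac{\Delta x}{2})^{P+1}=(\tfrac{\Delta x}{2})^{P+1-l}$, i.e.\ the stated order $\Delta x^{P+1-l}$, with $\zeta_l\in I_k$ the image of the reference point and a new constant $C_l>0$ absorbing the powers of $2$ and $\widetilde C_l$; crucially $C_l$ depends only on $P$ and the quadrature nodes, not on $\Delta x$.

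The only mildly delicate point — the \textbf{main obstacle} — is establishing the reference-element interpolation bound in the $W^{l,\infty}$ norm for $l$ up to $3$, rather than merely in $L^\infty$. One cannot simply differentiate a sup-norm bound; instead one invokes the representation of the interpolation error via the nodal polynomial $\omega_{P+1}(\xi)=\prod_i(\xi-\xi_i)$ together with a divided-difference/Peano-kernel formula, and bounds the $l$-th derivative of that representation. Since this is a standard result for fixed $P$ on a fixed reference interval, I would cite it (e.g.\ classical spectral-element interpolation theory) rather than reprove it, and then the rest of the argument is the routine scaling computation above. Finally, applying this node-by-node on each element $I_k$ and using that $u$ is globally smooth yields the claimed form of the truncation error uniformly over the mesh.
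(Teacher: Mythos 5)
Your proposal is correct and follows essentially the same route as the paper: the paper's proof simply defers to Theorem 3 of Howell (1991), which is precisely the reference-element derivative-interpolation bound you identify as the one nontrivial ingredient, and the remaining content is the affine scaling computation $(\tfrac{2}{\Delta x})^{l}(\tfrac{\Delta x}{2})^{P+1}=(\tfrac{\Delta x}{2})^{P+1-l}$ that you carry out explicitly. Your write-up in fact supplies more detail than the paper's one-line proof.
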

\begin{proof}
The proof is a straightforward adaptation of the proof of Theorem 3 in \cite{howell1991derivative}.
\end{proof}
\subsection{Numerical interface treatments} \label{numericalinterfacetreatments}
Consider a two-element model $\Omega = \Omega^{-} \cup \Omega^{+}$, where $\Omega^{-}$ and $\Omega^{+}$ are subdomains defined as in \cref{sec:interfaceconditions}. We map each of $\Omega^{-}$ and $\Omega^{+}$ to the reference element $\widehat{\Omega} = [-1,1]$, and hence we have the discrete operators $D_x$ and $M_x$ as defined in $\eqref{discreteoperators}$ for each $\Omega^{-}$ and $\Omega^{+}$.

We write the nodal values of the variables as the following stacked vectors:
\begin{equation*}
\mathbf{u} = \left[\begin{matrix} \mathbf{u}^- \\ \mathbf{u}^+\end{matrix}\right],
\end{equation*}
where the minus and plus superscripts indicate the nodal values in $\Omega^{-}$ and $\Omega^{+}$ respectively. The discrete spatial operators are written as block-diagonal matrices
\begin{equation*} \label{blockoperators}
    \mathbf{D} = \begin{bmatrix}
D_x & \mathbf{0} \\
\mathbf{0} & D_x  \\
\end{bmatrix} \quad \text{and} \quad \mathbf{M} = \begin{bmatrix}
M_x & \mathbf{0} \\
\mathbf{0} & M_x  \\
\end{bmatrix},
\end{equation*}
and the mass matrix $\mathbf{M}$ gives the following discrete inner product and norm:
\begin{equation*}
    \langle \mathbf{u}, \mathbf{u} \rangle_{\mathbf{M}} = \mathbf{u}^{\top} \mathbf{M} \mathbf{u} = \| \mathbf{u} \|^2_{\mathbf{M}}.
\end{equation*}

By replacing the continuous spatial derivatives in \eqref{linearizedserre} with the discrete derivative operator $\mathbf{D}$, we obtain an element local semi-discrete numerical approximation
\begin{align} \label{semidiscrete}
\begin{split}
    &\frac{d\mathbf{h}}{dt} + \mathbf{D}(H\mathbf{u} + U\mathbf{h}) = \mathbf{0}, \\
    &\frac{d\mathbf{u}}{dt} + \mathbf{D}\left(g\mathbf{h} + U\mathbf{u} - \frac{H^2U}{3}\mathbf{D}^2\mathbf{u} - \frac{H^2}{3}\mathbf{D}\left( \frac{d\mathbf{u}}{dt}\right)\right) = \mathbf{0}.
\end{split}
\end{align}
The numerical approximation above has not imposed the interface conditions derived in \cref{sec:interfaceconditions}, and hence the numerical solutions in each $\Omega^{-}$ and $\Omega^{+}$ are still disconnected. The next challenge lies in connecting the solutions across the elements in an accurate and stable manner. In order to achieve this, we will impose the interface conditions using the SAT method \cite{carpenter1994time}.

We introduce the spatial interface operator
\begin{equation*}
 \widetilde{\mathbf{B}} = \begin{bmatrix}
\mathbf{e}_R\mathbf{e}_R^{\top} & -\mathbf{e}_R\mathbf{e}_L^{\top} \\
\mathbf{e}_L\mathbf{e}_R^{\top} & -\mathbf{e}_L\mathbf{e}_L^{\top}  \\
\end{bmatrix},
\end{equation*}
and the penalized derivative operator
\begin{equation} \label{penalizedoperator}
\widetilde{\mathbf{D}} = \mathbf{D} - \frac{1}{2} \mathbf{M}^{-1} \widetilde{\mathbf{B}},
\end{equation}
where $\mathbf{e}_R = [0, 0, \ldots, 0, 1]^{\top}$ and $\mathbf{e}_L = [1, 0, \ldots, 0, 0]^{\top}$. Hence, for any grid function $\mathbf{u} \in \mathbb{R}^{2P + 2}$, we have $\widetilde{\mathbf{B}}\mathbf{u} = [0, 0, \ldots, 0, {u}^-_{P+1} - {u}^+_1, {u}^-_{P+1} - {u}^+_1, 0, \ldots, 0, 0]^{\top}$. Note that for continuous functions we have ${u}^-_{P+1} - {u}^+_1 = 0$, it follows that $\widetilde{\mathbf{B}}\mathbf{u} = \mathbf{0}$, and hence $\widetilde{\mathbf{D}}\mathbf{u} = {\mathbf{D}}\mathbf{u}$. For later use we state the following lemma which is a discrete analogue to \eqref{ibp_x}, \eqref{secondorderidentity}, and \eqref{thirdorderidentity}.
\begin{lemma} \label{interfacesatlemma2}
Consider the penalized derivative operator $\widetilde{\mathbf{D}}$ defined in \eqref{penalizedoperator}. For all grid functions $\mathbf{u}, \mathbf{v} \in \mathbb{R}^{2P + 2}$ we have
\begin{align*}
    &\left\langle \mathbf{u}, \widetilde{\mathbf{D}} \mathbf{v} \right\rangle_{\mathbf{M}} +  \left\langle \widetilde{\mathbf{D}} \mathbf{u}, \mathbf{v} \right\rangle_{\mathbf{M}} = {u}^+_{P+1}{v}^+_{P+1} - {u}^-_1{v}^-_1, \\
    &\left\langle \mathbf{u}, \widetilde{\mathbf{D}}^2 \frac{d\mathbf{u}}{dt} \right\rangle_{\mathbf{M}} = u^+_{P+1}\left(D_x\frac{d\mathbf{u}^+}{dt}\right)_{P+1} -u^-_1\left(D_x\frac{d\mathbf{u}^-}{dt}\right)_1-\frac{1}{2} \frac{d}{dt} \|\widetilde{\mathbf{D}}\mathbf{u}\|_{\mathbf{M}}, \\
    &\left\langle \mathbf{u}, \widetilde{\mathbf{D}}^3 \mathbf{u} \right\rangle_{\mathbf{M}} = {u}^+_{P+1}\left(D_x^2\mathbf{u}^+\right)_{P+1} -{u}^-_1\left(D_x^2\mathbf{u}^-\right)_1-\frac{1}{2} \left(D_x \mathbf{u}^+ \right)^2_{P+1} + \frac{1}{2}\left(D_x \mathbf{u}^-\right)_1^2.
\end{align*}
\end{lemma}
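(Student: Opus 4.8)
The plan is to prove the three identities by reducing each to the element-local SBP identities \eqref{sbp_x}, \eqref{sbpsecond}, \eqref{sbpthird} already established, together with a careful bookkeeping of the interface penalty terms introduced by $\widetilde{\mathbf{B}}$. The central algebraic fact I would establish first is a ``block SBP'' identity for the penalized operator: from \eqref{penalizedoperator} we have $\mathbf{M}\widetilde{\mathbf{D}} = \mathbf{M}\mathbf{D} - \tfrac12\widetilde{\mathbf{B}}$, and since $\mathbf{M}\mathbf{D}+\mathbf{D}^\top\mathbf{M}$ is block-diagonal with blocks $B = \operatorname{diag}(-1,0,\dots,0,1)$ by \eqref{sbp2}, a direct computation gives $\mathbf{M}\widetilde{\mathbf{D}} + \widetilde{\mathbf{D}}^\top\mathbf{M} = \widetilde{\mathbf{B}}_{\mathrm{full}}$, where $\widetilde{\mathbf{B}}_{\mathrm{full}}$ collects the physical endpoint contributions $\pm\mathbf{e}_L\mathbf{e}_L^\top$, $\pm\mathbf{e}_R\mathbf{e}_R^\top$ in each block \emph{and} the cross-terms $-\tfrac12(\widetilde{\mathbf{B}}+\widetilde{\mathbf{B}}^\top)$; the symmetrized interface operator has the property that all interior-interface contributions telescope so that $\mathbf{v}^\top(\mathbf{M}\widetilde{\mathbf{D}}+\widetilde{\mathbf{D}}^\top\mathbf{M})\mathbf{u}$ equals exactly ${u}^+_{P+1}{v}^+_{P+1} - {u}^-_1{v}^-_1$. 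This is precisely the first identity.

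Second, for the mixed second-derivative identity I would apply the first identity twice. Write $\langle\mathbf{u},\widetilde{\mathbf{D}}^2\tfrac{d\mathbf u}{dt}\rangle_{\mathbf M} = \langle\mathbf{u},\widetilde{\mathbf{D}}(\widetilde{\mathbf{D}}\tfrac{d\mathbf u}{dt})\rangle_{\mathbf M}$ and use the first identity with $\mathbf v = \widetilde{\mathbf{D}}\tfrac{d\mathbf u}{dt}$ to pull out boundary terms $u^+_{P+1}(\widetilde{\mathbf D}\tfrac{d\mathbf u}{dt})^+_{P+1} - u^-_1(\widetilde{\mathbf D}\tfrac{d\mathbf u}{dt})^-_1$ minus $\langle\widetilde{\mathbf D}\mathbf u,\widetilde{\mathbf D}\tfrac{d\mathbf u}{dt}\rangle_{\mathbf M}$. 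The key observation is that at the physical endpoints $\widetilde{\mathbf D}$ acts as the element-local $D_x$ (the penalty $\widetilde{\mathbf B}$ touches only the interface rows), so the boundary terms simplify to $u^+_{P+1}(D_x\tfrac{d\mathbf u^+}{dt})_{P+1} - u^-_1(D_x\tfrac{d\mathbf u^-}{dt})_1$; the remaining inner-product term is $\tfrac12\tfrac{d}{dt}\|\widetilde{\mathbf D}\mathbf u\|^2_{\mathbf M}$ by the product rule and symmetry of $\mathbf M$. (I would double-check whether the stated right-hand side should read $\tfrac12\tfrac{d}{dt}\|\widetilde{\mathbf D}\mathbf u\|^2_{\mathbf M}$ rather than $\tfrac12\tfrac{d}{dt}\|\widetilde{\mathbf D}\mathbf u\|_{\mathbf M}$; I would state it with the squared norm to match \eqref{secondorderidentity}.)

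Third, for the third-derivative identity the cleanest route is to note that $\widetilde{\mathbf D}\mathbf u$ and $\widetilde{\mathbf D}^2\mathbf u$, when restricted away from the interface nodes, agree with $D_x\mathbf u^\pm$ and $D_x^2\mathbf u^\pm$ respectively, and the physical-endpoint values of $\widetilde{\mathbf D}^j\mathbf u$ coincide with $(D_x^j\mathbf u^\pm)$ at $j=1,2$. One then applies the ``two-element'' version of \eqref{sbpthird}: that is, run the three-fold integration-by-parts argument (first identity applied three times, i.e. $\langle\mathbf u,\widetilde{\mathbf D}^3\mathbf u\rangle = $ boundary terms $- \langle\widetilde{\mathbf D}\mathbf u,\widetilde{\mathbf D}^2\mathbf u\rangle$, then symmetrize $\langle\widetilde{\mathbf D}\mathbf u,\widetilde{\mathbf D}^2\mathbf u\rangle$ once more) and check that every interface contribution cancels between the $+$ and $-$ subdomains — this cancellation is exactly what the structure of $\widetilde{\mathbf B}$ guarantees (its off-diagonal blocks $\pm\mathbf e_R\mathbf e_L^\top$, $\pm\mathbf e_L\mathbf e_R^\top$ are built so that jumps, not individual traces, appear). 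What survives is only the physical-endpoint data, yielding the claimed formula.

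The main obstacle I anticipate is the interface bookkeeping in the third identity: unlike the first two, it involves traces of first \emph{and} second discrete derivatives at the interface, and one must verify that all of $({u}^-_{P+1} - {u}^+_1)$-type mismatch terms and the mixed $(D_x\mathbf u)$-trace terms at $x=0$ genuinely telescope to zero once the $+$ and $-$ pieces are summed. I would handle this by first proving a general lemma stating that for any grid functions the quantity $\langle\mathbf a,\widetilde{\mathbf D}\mathbf b\rangle_{\mathbf M} + \langle\widetilde{\mathbf D}\mathbf a,\mathbf b\rangle_{\mathbf M}$ has \emph{no} contribution from the interface nodes (only physical endpoints), then applying it iteratively with $\mathbf a,\mathbf b$ chosen among $\mathbf u$, $\widetilde{\mathbf D}\mathbf u$, $\widetilde{\mathbf D}^2\mathbf u$ — this isolates all the delicate cancellation into a single, once-and-for-all step and makes the rest routine.
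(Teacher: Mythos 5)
The paper states this lemma without proof, so there is no in-text argument to compare against; judged on its own terms, your strategy is the natural one and your treatment of the first two identities is complete and correct. The computation $\mathbf{M}\widetilde{\mathbf{D}}+\widetilde{\mathbf{D}}^{\top}\mathbf{M} = \operatorname{diag}\left(-\mathbf{e}_L\mathbf{e}_L^{\top},\,\mathbf{e}_R\mathbf{e}_R^{\top}\right)$ (the symmetric part of $\widetilde{\mathbf{B}}$ exactly cancels the interface entries of the two copies of $B$) gives the first identity, and since a single application of $\widetilde{\mathbf{D}}$ differs from $\mathbf{D}$ only on the two interface nodes, the traces $(\widetilde{\mathbf{D}}\mathbf{w})^{+}_{P+1}=(D_x\mathbf{w}^{+})_{P+1}$ and $(\widetilde{\mathbf{D}}\mathbf{w})^{-}_{1}=(D_x\mathbf{w}^{-})_{1}$ hold exactly, which settles the second identity. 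Your observation that the right-hand side of the second identity should carry $\|\widetilde{\mathbf{D}}\mathbf{u}\|^2_{\mathbf{M}}$ rather than $\|\widetilde{\mathbf{D}}\mathbf{u}\|_{\mathbf{M}}$ is also correct; that is a typo in the statement.

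The gap is in the third identity, and it sits precisely where you claim that ``the physical-endpoint values of $\widetilde{\mathbf{D}}^{j}\mathbf{u}$ coincide with $D_x^{j}\mathbf{u}^{\pm}$ at $j=1,2$.'' This is true for $j=1$ but false for $j=2$: writing $\widetilde{\mathbf{D}}\mathbf{u}=\mathbf{D}\mathbf{u}-\frac12\mathbf{M}^{-1}\widetilde{\mathbf{B}}\mathbf{u}$, the correction $\frac12\mathbf{M}^{-1}\widetilde{\mathbf{B}}\mathbf{u}$ is supported on the two interface nodes, but the second application of the block operator $\mathbf{D}$ involves the \emph{dense} spectral differentiation matrix $D_x$, whose corner entries do not vanish (for Legendre--Gauss--Lobatto nodes on the reference element, $D_{P+1,1}=(-1)^{P}/2$). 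Consequently
\begin{equation*}
\left(\widetilde{\mathbf{D}}^{2}\mathbf{u}\right)^{+}_{P+1}
=\left(D_x^{2}\mathbf{u}^{+}\right)_{P+1}
-\frac12\left(D_xM_x^{-1}\mathbf{e}_L\right)_{P+1}\left({u}^{-}_{P+1}-{u}^{+}_{1}\right),
\end{equation*}
and similarly at the node $1$ of the minus element, so your derivation of the third identity (and, read literally, the identity itself) acquires a residual proportional to the interface jump ${u}^{-}_{P+1}-{u}^{+}_{1}$; it is exact only for grid functions that are continuous at the interface. The interface cancellation you isolate in your ``once-and-for-all'' lemma correctly handles the inner-product pairings, but it does not address this smearing of the penalty into the outer boundary traces. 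To close the argument you must either add the jump-proportional terms to the statement, restrict to interface-continuous $\mathbf{u}$, or argue that in the subsequent stability estimate these cross terms (outer-boundary trace times interface jump) are discarded along with the other physical-boundary contributions.
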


To connect the solutions across the elements, we add interface penalty terms to the right hand side of \eqref{semidiscrete} as follows:
\begin{subequations} \label{interfacesat}
\begin{align}
    &\frac{d\mathbf{h}}{dt} + \mathbf{D}(H\mathbf{u} + U\mathbf{h}) = \tau_{11}H\mathbf{M}^{-1}\widetilde{\mathbf{B}}\mathbf{u} + \tau_{12}U\mathbf{M}^{-1}\widetilde{\mathbf{B}}\mathbf{h}  - \alpha_h  \mathbf{M}^{-1}\widetilde{\mathbf{B}}^{\top} \widetilde{\mathbf{B}}\mathbf{h}, \label{interfacesat1} \\
    &\frac{d\mathbf{u}}{dt} + \mathbf{D}\left(g\mathbf{h} + U\mathbf{u} - \frac{H^2U}{3}\mathbf{D}^2\mathbf{u} - \frac{H^2}{3}\mathbf{D}\left( \frac{d\mathbf{u}}{dt}\right)\right) = \tau_{21}g\mathbf{M}^{-1}\widetilde{\mathbf{B}}\mathbf{h} + \tau_{22}U\mathbf{M}^{-1}\widetilde{\mathbf{B}}\mathbf{u} \label{interfacesat2}\\
    &+\gamma_{21}H^2\mathbf{D}\mathbf{M}^{-1}\widetilde{\mathbf{B}}\frac{d\mathbf{u}}{dt} +\gamma_{22}H^2\mathbf{M}^{-1}\widetilde{\mathbf{B}}\mathbf{D}\frac{d\mathbf{u}}{dt}
    +\gamma_{23}H^2\mathbf{M}^{-1}\widetilde{\mathbf{B}}\mathbf{M}^{-1}\widetilde{\mathbf{B}}\frac{d\mathbf{u}}{dt} \nonumber \\
    &+\sigma_{21}H^2U \mathbf{D}^2\mathbf{M}^{-1}\widetilde{\mathbf{B}}\mathbf{u} + \sigma_{22}H^2U \mathbf{D}\mathbf{M}^{-1}\widetilde{\mathbf{B}} \mathbf{D}\mathbf{u} + \sigma_{23}H^2U \mathbf{D}\mathbf{M}^{-1}\widetilde{\mathbf{B}}\mathbf{M}^{-1}\widetilde{\mathbf{B}}\mathbf{u} \nonumber \\
    &+\sigma_{24}H^2U\mathbf{M}^{-1}\widetilde{\mathbf{B}}\mathbf{D}^2\mathbf{u} + \sigma_{25}H^2U\mathbf{M}^{-1}\widetilde{\mathbf{B}}\mathbf{D}\mathbf{M}^{-1}\widetilde{\mathbf{B}}\mathbf{u} + \sigma_{26}H^2U\mathbf{M}^{-1}\widetilde{\mathbf{B}}\mathbf{M}^{-1}\widetilde{\mathbf{B}}\mathbf{D}\mathbf{u} \nonumber \\
    &+\sigma_{27}H^2U\mathbf{M}^{-1}\widetilde{\mathbf{B}}\mathbf{M}^{-1}\widetilde{\mathbf{B}}\mathbf{M}^{-1}\widetilde{\mathbf{B}}\mathbf{u} - \alpha_u  \mathbf{M}^{-1}\widetilde{\mathbf{B}}^{\top} \widetilde{\mathbf{B}}\mathbf{u} \nonumber,
\end{align}
\end{subequations}
where $\tau_{ij}, \gamma_{ij}, \sigma_{ij}$ are real penalty parameters that will be chosen later to ensure stability, and $\alpha_h$, $\alpha_u \geq 0$ are real upwind parameters.
\begin{remark}
We note that the semi-discrete numerical approximation \eqref{interfacesat} is consistent with the interface conditions \eqref{eq:interface_conditions_1}. To see this, suppose that $h$ and $u$ are exact solutions of the Serre equations \eqref{linearizedserre_minus} and \eqref{linearizedserre_plus} subject to the interface conditions \eqref{eq:interface_conditions_1}. Since $h$ and $u$ are continuous at the interface, we have $\widetilde{\mathbf{B}}\mathbf{h} = \widetilde{\mathbf{B}}\mathbf{u} = \widetilde{\mathbf{B}}\frac{d\mathbf{u}}{dt} = \mathbf{0}$. Furthermore, the continuity of ${\partial u}/{\partial x}$ and ${\partial^2u}/{\partial x^2}$ at the interface implies $\widetilde{\mathbf{B}}\mathbf{D}\mathbf{u} = \widetilde{\mathbf{B}}\mathbf{D}\frac{d\mathbf{u}}{dt} = \widetilde{\mathbf{B}}\mathbf{D}^2\mathbf{u} \approx \mathbf{0}$, where we have ignored truncation errors that arise from spatial derivatives approximations. Therefore, any exact solutions that solves the Serre equations \eqref{linearizedserre_minus} and \eqref{linearizedserre_plus} subject to the interface conditions \eqref{eq:interface_conditions_1} will cause the penalty terms in the scheme \eqref{interfacesat} to vanish, and hence we recover \eqref{semidiscrete}.
\end{remark}

Let us now show that for a specific choice of the penalty parameters the numerical approximation \eqref{interfacesat} is conservative and stable. The following lemma states that under an appropriate choice of the penalty parameters, we can rewrite \eqref{interfacesat} into a more convenient form for our later analysis.
\begin{lemma} \label{interfacesatlemma}
Consider the semi-discrete numerical approximation \eqref{interfacesat} with the penalty parameters
\begin{align} \label{interfacepenalties}
\begin{split}
    &\tau_{11} = \tau_{12} = \tau_{21} = \tau_{22} = \frac{1}{2}, \quad \gamma_{21} = \gamma_{22} = \sigma_{21} = \sigma_{22} = \sigma_{24} = -\frac{1}{6}, \\
    &\gamma_{23} = \sigma_{23} = \sigma_{25} = \sigma_{26} = \frac{1}{12}, \quad \sigma_{27} = -\frac{1}{24}.
\end{split}
\end{align}
Then, \eqref{interfacesat} can be written as
\begin{subequations} \label{interfacepenalized}
\begin{align}
    &\frac{d\mathbf{h}}{dt} + \widetilde{\mathbf{D}}(H\mathbf{u} + U\mathbf{h}) + \alpha_h \mathbf{M}^{-1}\widetilde{\mathbf{B}}^{\top} \widetilde{\mathbf{B}}\mathbf{h}  = \mathbf{0}, \label{interfacepenalized1}\\
    &\frac{d\mathbf{u}}{dt} + \widetilde{\mathbf{D}}\left(g\mathbf{h} + U\mathbf{u} - \frac{H^2U}{3}\widetilde{\mathbf{D}}^2\mathbf{u} - \frac{H^2}{3}\widetilde{\mathbf{D}}\left( \frac{d\mathbf{u}}{dt}\right)\right) + \alpha_u  \mathbf{M}^{-1}\widetilde{\mathbf{B}}^{\top} \widetilde{\mathbf{B}}\mathbf{u} = \mathbf{0}, \label{interfacepenalized2}
\end{align}
\end{subequations}
where the operator $\widetilde{\mathbf{D}}$ is given by \eqref{penalizedoperator}.
\end{lemma}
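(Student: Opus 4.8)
The plan is to expand each penalty term in \eqref{interfacesat} using the definition \eqref{penalizedoperator} of $\widetilde{\mathbf{D}}$, collect matching powers of $\mathbf{M}^{-1}\widetilde{\mathbf{B}}$, and match coefficients against the given parameter values \eqref{interfacepenalties}. The key algebraic fact is that $\widetilde{\mathbf{D}} = \mathbf{D} - \tfrac12 \mathbf{M}^{-1}\widetilde{\mathbf{B}}$, so any monomial in $\widetilde{\mathbf{D}}$ of degree $\ell$ expands as a sum of $2^\ell$ words in the two letters $\mathbf{D}$ and $-\tfrac12\mathbf{M}^{-1}\widetilde{\mathbf{B}}$. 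Since $\mathbf{D}$ and $\mathbf{M}^{-1}\widetilde{\mathbf{B}}$ do \emph{not} commute, the expansion must keep every word in order; this is precisely why the scheme \eqref{interfacesat} lists all the seemingly redundant $\sigma_{2j}$ terms (e.g.\ $\mathbf{D}^2\mathbf{M}^{-1}\widetilde{\mathbf{B}}$ versus $\mathbf{D}\mathbf{M}^{-1}\widetilde{\mathbf{B}}\mathbf{D}$ versus $\mathbf{M}^{-1}\widetilde{\mathbf{B}}\mathbf{D}^2$). So the whole lemma is really a bookkeeping identity: show that the right-hand side penalty combination in \eqref{interfacesat} equals $-\bigl[\widetilde{\mathbf{D}}(\cdots) - \mathbf{D}(\cdots)\bigr]$ term by term.

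I would organize the verification by the three structurally distinct pieces of \eqref{interfacepenalized2}. First, the first-order flux $U\mathbf{u} + g\mathbf{h}$: here $\widetilde{\mathbf{D}}(g\mathbf{h}+U\mathbf{u}) = \mathbf{D}(g\mathbf{h}+U\mathbf{u}) - \tfrac12\mathbf{M}^{-1}\widetilde{\mathbf{B}}(g\mathbf{h}+U\mathbf{u})$, which accounts for the $\tau_{21} = \tau_{22} = \tfrac12$ terms after moving to the right-hand side; the same computation with \eqref{interfacepenalized1} gives $\tau_{11} = \tau_{12} = \tfrac12$. Second, the mixed space-time term $-\tfrac{H^2}{3}\widetilde{\mathbf{D}}\widetilde{\mathbf{D}}(d\mathbf{u}/dt)$: expanding $\widetilde{\mathbf{D}}^2 = \mathbf{D}^2 - \tfrac12\mathbf{D}\mathbf{M}^{-1}\widetilde{\mathbf{B}} - \tfrac12\mathbf{M}^{-1}\widetilde{\mathbf{B}}\mathbf{D} + \tfrac14\mathbf{M}^{-1}\widetilde{\mathbf{B}}\mathbf{M}^{-1}\widetilde{\mathbf{B}}$ and multiplying by $-\tfrac{H^2}{3}$ produces exactly the three coefficients $\gamma_{21} = \gamma_{22} = -\tfrac16$ and $\gamma_{23} = \tfrac{1}{12}$ (note $-\tfrac{H^2}{3}\cdot(-\tfrac12) = \tfrac{H^2}{6}$, matching $\gamma H^2$ with $\gamma = -\tfrac16$ once the term is moved across the equals sign; and $-\tfrac{H^2}{3}\cdot\tfrac14 = -\tfrac{H^2}{12}$, i.e.\ $\gamma_{23} = \tfrac1{12}$). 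Third, the dispersive term $-\tfrac{H^2U}{3}\widetilde{\mathbf{D}}\widetilde{\mathbf{D}}^2\mathbf{u} = -\tfrac{H^2U}{3}\widetilde{\mathbf{D}}^3\mathbf{u}$: expand $\widetilde{\mathbf{D}}^3$ into its $8$ ordered words, discard the pure-$\mathbf{D}^3$ word (it belongs to \eqref{semidiscrete}), and read off the remaining $7$ coefficients; multiplying by $-\tfrac{H^2U}{3}$ the words with one $\mathbf{M}^{-1}\widetilde{\mathbf{B}}$ (coefficient $-\tfrac12$ each, three of them: $\mathbf{D}^2\mathbf{M}^{-1}\widetilde{\mathbf{B}}$, $\mathbf{D}\mathbf{M}^{-1}\widetilde{\mathbf{B}}\mathbf{D}$, $\mathbf{M}^{-1}\widetilde{\mathbf{B}}\mathbf{D}^2$) gives $\sigma_{21} = \sigma_{22} = \sigma_{24} = -\tfrac16$; the words with two $\mathbf{M}^{-1}\widetilde{\mathbf{B}}$ (coefficient $+\tfrac14$ each, three of them) give $\sigma_{23} = \sigma_{25} = \sigma_{26} = \tfrac1{12}$; the word with three (coefficient $-\tfrac18$) gives $\sigma_{27} = -\tfrac1{24}$. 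The upwind terms $-\alpha_h\mathbf{M}^{-1}\widetilde{\mathbf{B}}^\top\widetilde{\mathbf{B}}\mathbf{h}$ and $-\alpha_u\mathbf{M}^{-1}\widetilde{\mathbf{B}}^\top\widetilde{\mathbf{B}}\mathbf{u}$ appear identically on both sides and simply move across unchanged.

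The main obstacle is purely combinatorial care: one must be certain that the seven $\sigma_{2j}$-terms in \eqref{interfacesat2} are in bijection with the seven nontrivial ordered words in $\widetilde{\mathbf{D}}^3$ (three of length-one-$\widetilde{\mathbf{B}}$ type, three of length-two type, one of length-three type), and similarly for the three $\gamma$-terms and $\widetilde{\mathbf{D}}^2$; getting the ordering of $\mathbf{D}$ and $\mathbf{M}^{-1}\widetilde{\mathbf{B}}$ right in each word, and tracking the sign $(-\tfrac12)^k$ together with the overall $-\tfrac{H^2U}{3}$ or $-\tfrac{H^2}{3}$ prefactor and the sign flip from moving terms to the right-hand side, is where an error would creep in. Once the dictionary is fixed, each coefficient check is a one-line multiplication, so I would present the argument as: (i) state $\widetilde{\mathbf{D}} = \mathbf{D} - \tfrac12\mathbf{M}^{-1}\widetilde{\mathbf{B}}$; (ii) substitute into \eqref{interfacepenalized} and expand $\widetilde{\mathbf{D}}^2$, $\widetilde{\mathbf{D}}^3$; (iii) move every term containing at least one factor $\mathbf{M}^{-1}\widetilde{\mathbf{B}}$ to the right-hand side and observe it reproduces exactly the penalty terms of \eqref{interfacesat} with the parameters \eqref{interfacepenalties}. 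This is a verification, not a discovery, so no deeper structural insight is required beyond the non-commutativity observation.
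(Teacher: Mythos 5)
Your proposal is correct and follows essentially the same route as the paper's Appendix~B proof: both reduce to expanding powers of $\widetilde{\mathbf{D}} = \mathbf{D} - \tfrac{1}{2}\mathbf{M}^{-1}\widetilde{\mathbf{B}}$ into ordered noncommutative words and matching the resulting coefficients $(-\tfrac12)^k$ (scaled by $-\tfrac{H^2}{3}$, $-\tfrac{H^2U}{3}$, and the sign flip across the equality) against the penalty parameters; the paper merely runs the identity in the opposite direction, regrouping the penalty terms into $\widetilde{\mathbf{D}}^k-\mathbf{D}^k$ combinations. Your bijection between the seven $\sigma_{2j}$-terms and the seven nontrivial words in $\widetilde{\mathbf{D}}^3$, and between the three $\gamma_{2j}$-terms and $\widetilde{\mathbf{D}}^2$, is exactly the bookkeeping the paper performs.
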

The proof of the lemma involves mainly algebraic manipulations, and has been moved to \cref{interfacesatappendix}.

The theorems below state that the numerical approximation \eqref{interfacesat} is conservative and stable when the penalty parameters are chosen as specified in \eqref{interfacepenalties}.
\begin{theorem} \label{discreteconservationtheorem}
Consider the semi-discrete numerical approximation \eqref{interfacesat} with the penalty parameters \eqref{interfacepenalties} and upwind parameters that are real and positive, $\alpha_h \ge 0$ and $\alpha_u \ge 0$. Considering only boundary contributions from the interface, the numerical interface treatment  \eqref{interfacesat} is conservative, that is
\begin{equation*} \label{discreteconservation}
   g\left\langle \mathbf{1}, \frac{d\mathbf{h}}{dt}\right\rangle_{\mathbf{M}} = 0, \quad H\left\langle \mathbf{1}, \frac{d\mathbf{u}}{dt} \right\rangle_{\mathbf{M}} = 0,
\end{equation*}
where $\mathbf{1} = [1, 1, \cdots 1]\in \mathbb{R}^{2P+2}$.
\end{theorem}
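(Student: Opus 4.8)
The plan is to test the semi-discrete scheme \eqref{interfacesat} against the constant vector $\mathbf{1}$ in the discrete inner product $\langle\cdot,\cdot\rangle_{\mathbf{M}}$, exactly mirroring the choice $\phi_h = \phi_u = 1$ used in the continuous proof of Theorem \ref{theorem:conservation_principle}. Rather than work with the raw penalized form \eqref{interfacesat}, I would first invoke Lemma \ref{interfacesatlemma} to rewrite the scheme in the compact form \eqref{interfacepenalized}, since the penalty parameters are taken as in \eqref{interfacepenalties}. So the task reduces to showing that pairing \eqref{interfacepenalized1} and \eqref{interfacepenalized2} with $\mathbf{1}$ kills every term except the time derivative.

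For the mass equation \eqref{interfacepenalized1}, taking $\langle \mathbf{1}, \cdot\rangle_{\mathbf{M}}$ gives
\[
\left\langle \mathbf{1}, \frac{d\mathbf{h}}{dt}\right\rangle_{\mathbf{M}} + \left\langle \mathbf{1}, \widetilde{\mathbf{D}}(H\mathbf{u}+U\mathbf{h})\right\rangle_{\mathbf{M}} + \alpha_h \left\langle \mathbf{1}, \mathbf{M}^{-1}\widetilde{\mathbf{B}}^{\top}\widetilde{\mathbf{B}}\mathbf{h}\right\rangle_{\mathbf{M}} = 0.
\]
For the middle term I would use the first identity in Lemma \ref{interfacesatlemma2} with $\mathbf{u}\leftarrow\mathbf{1}$: since $\widetilde{\mathbf{D}}\mathbf{1} = \mathbf{0}$ (the operator $D$ differentiates polynomials exactly, so it annihilates constants, and $\widetilde{\mathbf{B}}\mathbf{1}=\mathbf{0}$ because the jump of a constant vanishes), the summation-by-parts identity collapses to $\langle \mathbf{1}, \widetilde{\mathbf{D}}\mathbf{z}\rangle_{\mathbf{M}} = z^+_{P+1} - z^-_1$ for any $\mathbf{z}$, which is the discrete outer-boundary term at $x_R$ and $x_L$ — precisely the contribution we are instructed to ignore ("considering only boundary contributions from the interface"). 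For the upwind term, $\langle \mathbf{1}, \mathbf{M}^{-1}\widetilde{\mathbf{B}}^{\top}\widetilde{\mathbf{B}}\mathbf{h}\rangle_{\mathbf{M}} = \mathbf{1}^{\top}\widetilde{\mathbf{B}}^{\top}\widetilde{\mathbf{B}}\mathbf{h} = (\widetilde{\mathbf{B}}\mathbf{1})^{\top}(\widetilde{\mathbf{B}}\mathbf{h}) = 0$ since $\widetilde{\mathbf{B}}\mathbf{1}=\mathbf{0}$. Multiplying through by $g$ yields $g\langle\mathbf{1}, d\mathbf{h}/dt\rangle_{\mathbf{M}} = 0$.

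The momentum equation \eqref{interfacepenalized2} is handled the same way, but one must check the dispersive block $\widetilde{\mathbf{D}}(g\mathbf{h}+U\mathbf{u} - \tfrac{H^2U}{3}\widetilde{\mathbf{D}}^2\mathbf{u} - \tfrac{H^2}{3}\widetilde{\mathbf{D}}(d\mathbf{u}/dt))$: pairing with $\mathbf{1}$ and using $\langle\mathbf{1},\widetilde{\mathbf{D}}\mathbf{z}\rangle_{\mathbf{M}} = z^+_{P+1}-z^-_1$ again shows the whole flux contributes only outer-boundary values, and the $\alpha_u$ upwind term vanishes by $\widetilde{\mathbf{B}}\mathbf{1}=\mathbf{0}$ exactly as before; multiplying by $H$ gives the second claimed identity. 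The one point needing a little care — and the closest thing to an obstacle — is the bookkeeping of what "boundary contributions from the interface" means: the SBP-type identity produces a genuine outer-boundary term $z^+_{P+1} - z^-_1$ that does not in general vanish, so I would state explicitly that discarding it is exactly the hypothesis of the theorem (just as Theorem \ref{theorem:conservation_principle} assumed $F_h|_{x_L}^{x_R}=F_u|_{x_L}^{x_R}=0$), and that the \emph{interface} node $x=0$ genuinely drops out because $\widetilde{\mathbf{D}}$ has absorbed the interface penalty and annihilates constants. No macro beyond those already defined is needed, and the computation is otherwise routine.
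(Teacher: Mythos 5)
Your proposal is correct and follows essentially the same route as the paper's proof: invoke Lemma \ref{interfacesatlemma} to pass to the form \eqref{interfacepenalized}, pair with $\mathbf{1}$, and use $\widetilde{\mathbf{D}}\mathbf{1}=\mathbf{0}$ and $\widetilde{\mathbf{B}}\mathbf{1}=\mathbf{0}$ together with Lemma \ref{interfacesatlemma2} to eliminate the flux and upwind terms. Your explicit remark that the residual $z^+_{P+1}-z^-_1$ is an outer-boundary term discarded by the hypothesis ``considering only boundary contributions from the interface'' is a welcome clarification of a point the paper leaves implicit.
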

\begin{proof}
Applying Lemma \ref{interfacesatlemma} and left multiplying \eqref{interfacepenalized1} by $g\mathbf{1}^{\top}\mathbf{M}$ and \eqref{interfacepenalized2} by $H\mathbf{1}^{\top}\mathbf{M}$, we have
\begin{align} \label{thm43eq}
\begin{split}
    &g\left\langle \mathbf{1}, \frac{d\mathbf{h}}{dt} \right\rangle_{\mathbf{M}} + g\left\langle \mathbf{1}, \widetilde{\mathbf{D}}(H\mathbf{u} + U\mathbf{h}) \right\rangle_{\mathbf{M}} +  \alpha_h g\left\langle \mathbf{1},\mathbf{M}^{-1}\widetilde{\mathbf{B}}^{\top} \widetilde{\mathbf{B}}\mathbf{h}\right\rangle_{\mathbf{M}}= 0, \\
    &H\left\langle \mathbf{1}, \frac{d\mathbf{u}}{dt} \right\rangle_{\mathbf{M}} + H\left\langle \mathbf{1}, \widetilde{\mathbf{D}}\left(g\mathbf{h} + U\mathbf{u} - \frac{H^2U}{3}\widetilde{\mathbf{D}}^2\mathbf{u} - \frac{H^2}{3}\widetilde{\mathbf{D}}\left( \frac{d\mathbf{u}}{dt}\right)\right) \right\rangle_{\mathbf{M}} +  \alpha_u H\left\langle \mathbf{1},\mathbf{M}^{-1}\widetilde{\mathbf{B}}^{\top} \widetilde{\mathbf{B}}\mathbf{u }\right\rangle_{\mathbf{M}} = 0.
\end{split}
\end{align}
Considering only boundary contributions from the interface, Lemma \ref{interfacesatlemma2} and the obvious equalities $\widetilde{\mathbf{D}}\mathbf{1} = \mathbf{0}$ and $\widetilde{\mathbf{B}}\mathbf{1} = \mathbf{0}$ imply that we have $\left\langle \mathbf{1}, \widetilde{\mathbf{D}}\mathbf{v} \right\rangle_{\mathbf{M}} = 0$ and $\left\langle \mathbf{1}, \mathbf{M}^{-1}\widetilde{\mathbf{B}}^{\top} \widetilde{\mathbf{B}}\mathbf{v}\right\rangle_{\mathbf{M}} = 0$, for all $\mathbf{v} \in \mathbb{R}^{2P+2}$. Hence, \eqref{thm43eq} becomes
\begin{equation*} \label{thm43eq2}
   g\left\langle \mathbf{1}, \frac{d\mathbf{h}}{dt}\right\rangle_{\mathbf{M}} = 0, \quad H\left\langle \mathbf{1}, \frac{d\mathbf{u}}{dt} \right\rangle_{\mathbf{M}} = 0.
\end{equation*}
This completes the proof.
\end{proof}
\begin{theorem} \label{discreteenergyconservationtheorem}
Consider the semi-discrete numerical approximation \eqref{interfacesat} with the penalty parameters \eqref{interfacepenalties}, and define the discrete energy
\begin{equation*} \label{discreteinterfaceenergy}
    E_{\mathbf{M}}(t) = \frac{g}{2}\| \mathbf{h} \|^2_{\mathbf{M}} + \frac{H}{2} \|\mathbf{u} \|^2_{\mathbf{M}} + \frac{H^3}{6} \| \widetilde{\mathbf{D}}\mathbf{u} \|^2_{\mathbf{M}},
\end{equation*}
where $\widetilde{\mathbf{D}}$ is defined in \eqref{penalizedoperator}. Considering only boundary contributions from the interface, the discrete energy is bounded by the discrete energy of the initial data, that is
\begin{equation}
   E_{\mathbf{M}}(t) \le E_{\mathbf{M}}(0), \quad \forall t\ge 0.
\end{equation}
\end{theorem}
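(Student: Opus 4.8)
The plan is to mimic the continuous energy argument (Lemma \ref{theoremenergy} and Theorem \ref{theoremenergy_interface_stability}) at the discrete level, using the reformulation \eqref{interfacepenalized} afforded by Lemma \ref{interfacesatlemma} together with the SBP-type identities collected in Lemma \ref{interfacesatlemma2}. First I would invoke Lemma \ref{interfacesatlemma} to replace \eqref{interfacesat} by the penalized form \eqref{interfacepenalized1}--\eqref{interfacepenalized2}. Then I would left-multiply \eqref{interfacepenalized1} by $g\mathbf{h}^{\top}\mathbf{M}$ and \eqref{interfacepenalized2} by $H\mathbf{u}^{\top}\mathbf{M}$, writing everything in the discrete inner product $\langle\cdot,\cdot\rangle_{\mathbf{M}}$. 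The three volume terms in the energy $E_{\mathbf{M}}(t)$ will appear exactly as in the continuous proof: $\frac{g}{2}\frac{d}{dt}\|\mathbf{h}\|^2_{\mathbf{M}}$ from the $h$-equation, $\frac{H}{2}\frac{d}{dt}\|\mathbf{u}\|^2_{\mathbf{M}}$ from the leading terms of the $u$-equation, and $\frac{H^3}{6}\frac{d}{dt}\|\widetilde{\mathbf{D}}\mathbf{u}\|^2_{\mathbf{M}}$ from the mixed space-time derivative term after applying the second identity in Lemma \ref{interfacesatlemma2} (the discrete analogue of \eqref{secondorderidentity}).

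The key cancellations are: (i) the cross term $gH\langle\mathbf{h},\widetilde{\mathbf{D}}\mathbf{u}\rangle_{\mathbf{M}}$ from the $h$-equation cancels against $gH\langle\mathbf{u},\widetilde{\mathbf{D}}\mathbf{h}\rangle_{\mathbf{M}}$ from the $u$-equation, up to the interface boundary contribution produced by the first identity in Lemma \ref{interfacesatlemma2}; (ii) the purely advective terms $gU\langle\mathbf{h},\widetilde{\mathbf{D}}\mathbf{h}\rangle_{\mathbf{M}}$, $HU\langle\mathbf{u},\widetilde{\mathbf{D}}\mathbf{u}\rangle_{\mathbf{M}}$ and the third-derivative term $-\frac{H^3U}{3}\langle\mathbf{u},\widetilde{\mathbf{D}}^3\mathbf{u}\rangle_{\mathbf{M}}$ reduce, via the first and third identities of Lemma \ref{interfacesatlemma2}, to pure interface terms with exactly the algebraic structure of the continuous boundary term $\operatorname{BT}$ in \eqref{boundaryterm}. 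Assembling these, and discarding the outer boundary contributions at $x=x_L, x=x_R$ (we consider only interface contributions), I expect to arrive at
\begin{equation*}
\frac{dE_{\mathbf{M}}}{dt} = \left.\operatorname{IT}\right|_{x=0} - \alpha_h g \|\widetilde{\mathbf{B}}\mathbf{h}\|^2 - \alpha_u H \|\widetilde{\mathbf{B}}\mathbf{u}\|^2,
\end{equation*}
where $\operatorname{IT}$ is precisely the interface term of Lemma \ref{theoremenergy_interface}, now evaluated on the numerical traces $\mathbf{w}^{\pm}$ at the interface node. The crucial observation is that the interface operator $\widetilde{\mathbf{B}}$ enforces exactly $\widetilde{\mathbf{D}}$-consistency: the penalized operator $\widetilde{\mathbf{D}}$ replaces the one-sided traces by their averages at the interface, so that the discrete interface term $\operatorname{IT}$ already has the conserved structure $-\sum_j\lambda_j((w_j^+)^2-(w_j^-)^2)$ telescoped against the single shared interface value. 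I would then show $\left.\operatorname{IT}\right|_{x=0}=0$ by the same computation as in Theorem \ref{theoremenergy_interface_stability} — since $\widetilde{\mathbf{D}}$ uses the common average $\frac{1}{2}(u^-_{P+1}+u^+_1)$, the quantities $w_j^+$ and $w_j^-$ coincide when evaluated against $\widetilde{\mathbf{D}}$, forcing the telescoping sum to vanish — together with $\alpha_h,\alpha_u\ge 0$, to conclude $\frac{dE_{\mathbf{M}}}{dt}\le 0$, and integrate in time to get $E_{\mathbf{M}}(t)\le E_{\mathbf{M}}(0)$.

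The main obstacle I anticipate is the bookkeeping in step (i)--(ii): after multiplying by $g\mathbf{h}^{\top}\mathbf{M}$ and $H\mathbf{u}^{\top}\mathbf{M}$ one must carefully track every interface contribution generated by the three identities in Lemma \ref{interfacesatlemma2} — in particular the terms $u^+_{P+1}(D_x\mathbf{u}^+)_{P+1}$ and $u^+_{P+1}(D_x^2\mathbf{u}^+)_{P+1}$ and their minus-side analogues — and verify that, when combined with the factor $-\frac{H^3U}{3}$ and $-\frac{H^3}{3}$ weights, they reassemble into exactly $\operatorname{BT}(h^+,u^+)-\operatorname{BT}(h^-,u^-)$ evaluated at the interface node, i.e. into the $\mathbf{v}^{\top}\mathbf{A}\mathbf{v}$ form of \eqref{matrixA}. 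Since the penalty parameters \eqref{interfacepenalties} were reverse-engineered in Lemma \ref{interfacesatlemma} precisely so that $\widetilde{\mathbf{D}}$ carries the correct one-half weighting, this reassembly should go through cleanly, but it is the one place where an algebra slip would be easy; everything else is a direct transcription of the continuous proof with $(\cdot,\cdot)_\Omega \mapsto \langle\cdot,\cdot\rangle_{\mathbf{M}}$, $\partial/\partial x \mapsto \widetilde{\mathbf{D}}$, and \eqref{ibp_x},\eqref{secondorderidentity},\eqref{thirdorderidentity} $\mapsto$ Lemma \ref{interfacesatlemma2}.
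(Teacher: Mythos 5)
Your proposal is correct and follows the paper's proof essentially verbatim: rewrite the scheme via Lemma \ref{interfacesatlemma}, test with $g\mathbf{h}^{\top}\mathbf{M}$ and $H\mathbf{u}^{\top}\mathbf{M}$, apply the identities of Lemma \ref{interfacesatlemma2}, and time-integrate the resulting inequality $\frac{d}{dt}E_{\mathbf{M}}\le 0$. The only cosmetic difference is that you introduce a residual interface term $\operatorname{IT}$ that must then be shown to vanish by a telescoping argument, whereas in the paper Lemma \ref{interfacesatlemma2} already shows that the penalized operator $\widetilde{\mathbf{D}}$ produces no interface contributions whatsoever (only outer boundary terms, which are discarded when restricting attention to the interface), so the terms $gU\langle\mathbf{h},\widetilde{\mathbf{D}}\mathbf{h}\rangle_{\mathbf{M}}$, $HU\langle\mathbf{u},\widetilde{\mathbf{D}}\mathbf{u}\rangle_{\mathbf{M}}$ and $\langle\mathbf{u},\widetilde{\mathbf{D}}^{3}\mathbf{u}\rangle_{\mathbf{M}}$ are simply zero and no separate cancellation step is required.
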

\begin{proof}
We apply Lemma \ref{interfacesatlemma}, and then we left multiply \eqref{interfacepenalized1} and \eqref{interfacepenalized2} by $g\mathbf{h}^{\top}\mathbf{M}$ and $H\mathbf{u}^{\top}\mathbf{M}$ respectively. We have
\begin{align} \label{interfaceenergyproof1}
\begin{split}
    &g\left\langle \mathbf{h}, \frac{d \mathbf{h}}{dt} \right\rangle_{\mathbf{M}} + gH\left\langle \mathbf{h}, \widetilde{\mathbf{D}}\mathbf{u} \right\rangle_{\mathbf{M}} + gU\left\langle \mathbf{h}, \widetilde{\mathbf{D}}\mathbf{h} \right\rangle_{\mathbf{M}} + g\alpha_h\left({h}^-_{P+1} - {h}^+_1\right)^2  = 0, \\
    &H\left\langle \mathbf{u}, \frac{d \mathbf{u}}{dt} \right\rangle_{\mathbf{M}} + gH\left\langle \mathbf{u}, \widetilde{\mathbf{D}}\mathbf{h} \right\rangle_{\mathbf{M}} + HU\left\langle \mathbf{u}, \widetilde{\mathbf{D}}\mathbf{u} \right\rangle_{\mathbf{M}} \\
    &- \frac{H^3U}{3}\left\langle \mathbf{u}, \widetilde{\mathbf{D}}^3\mathbf{u} \right\rangle_{\mathbf{M}} - \frac{H^3}{3}\left\langle \mathbf{u}, \widetilde{\mathbf{D}}^2\frac{d\mathbf{u}}{dt} \right\rangle_{\mathbf{M}} + H\alpha_u\left({u}^-_{P+1} - {u}^+_1\right)^2 = 0.
\end{split}
\end{align}
Since we only consider boundary contributions from the interface,
Lemma \ref{interfacesatlemma2} gives
\begin{equation*}
    \left\langle \mathbf{h}, \widetilde{\mathbf{D}} \mathbf{h} \right\rangle_{\mathbf{M}} = \left\langle \mathbf{u}, \widetilde{\mathbf{D}} \mathbf{u} \right\rangle_{\mathbf{M}} = \left\langle \mathbf{u}, \widetilde{\mathbf{D}}^3 \mathbf{u} \right\rangle_{\mathbf{M}} = 0 \quad \text{and} \quad \left\langle \mathbf{u}, \widetilde{\mathbf{D}}^2 \frac{d\mathbf{u}}{dt} \right\rangle_{\mathbf{M}} = -\frac{1}{2}\frac{d}{dt}\| \widetilde{\mathbf{D}}\mathbf{u} \|^2_{\mathbf{M}},
\end{equation*}
and hence \eqref{interfaceenergyproof1} becomes
\begin{align*}
    &g\left\langle \mathbf{h}, \frac{d \mathbf{h}}{dt} \right\rangle_{\mathbf{M}} + gH\left\langle \mathbf{h}, \widetilde{\mathbf{D}}\mathbf{u} \right\rangle_{\mathbf{M}} +  g\alpha_h\left({h}^-_{P+1} - {h}^+_1\right)^2 = 0, \\
    &H\left\langle \mathbf{u}, \frac{d \mathbf{u}}{dt} \right\rangle_{\mathbf{M}} + gH\left\langle \mathbf{u}, \widetilde{\mathbf{D}}\mathbf{h} \right\rangle_{\mathbf{M}} + \frac{H^3}{6}\frac{d}{dt}\| \widetilde{\mathbf{D}}\mathbf{u} \|^2_{\mathbf{M}} +  H\alpha_u\left({u}^-_{P+1} - {u}^+_1\right)^2 = 0.
\end{align*}
Summing them together and using Lemma \ref{interfacesatlemma2} yields
\begin{equation}\label{eq:proof_end_0}
    \frac{d}{dt}\left(\frac{g}{2}\| \mathbf{h} \|^2_{\mathbf{M}} + \frac{H}{2} \|\mathbf{u} \|^2_{\mathbf{M}} + \frac{H^3}{6}\| \widetilde{\mathbf{D}}\mathbf{u} \|^2_{\mathbf{M}}\right) = - g\alpha_h\left({h}^-_{P+1} - {h}^+_1\right)^2 - H\alpha_u\left({u}^-_{P+1} - {u}^+_1\right)^2 \le 0,
\end{equation}
where we have used $\alpha_h \geq 0$ and $\alpha_u \geq 0$. We recognize that the left side of \eqref{eq:proof_end_0} is the time derivative of the discrete energy \eqref{discreteinterfaceenergy}, that is
\begin{equation}\label{eq:proof_end}
    \frac{d}{dt} E_{\mathbf{M}}(t) \le 0.
\end{equation}
Time integrating \eqref{eq:proof_end} completes the proof.
When the upwind parameters vanish, $\alpha_h =\alpha_u = 0$, the energy is conserved $E_{\mathbf{M}}(t) = E_{\mathbf{M}}(0)$.
\end{proof}

\subsection{Numerical boundary treatments}

In this section, we describe how to numerically enforce the boundary conditions derived in \cref{wellposedbcs} in a stable manner. To this end, we will utilize the SAT method to weakly impose the external boundary conditions. For simplicity, we will consider numerical approximations in a single element, and boundary contributions will be considered only one boundary at a time.

Let us begin by considering $U > 0$, which corresponds to \textbf{Case 2} in \cref{wellposedbcs}. From the continuous analysis in \cref{wellposedbcs}, we need to impose the boundary conditions \eqref{eq:bc_Up}. In order to ensure well-posedness, the constants $\alpha_j, \beta_j$ in \eqref{eq:bc_Up} have to chosen such that they satisfy the conditions of \cref{theorem:well-posednes_ibvp_UP}. 

For convenience, we only consider the following constants:
\begin{equation} \label{boundarycst1}
    \alpha_2 = \alpha_3 = \beta_2 = \beta_3 = 0 \quad \text{and} \quad \alpha_4 = \frac{1}{\beta_4} = \frac{C^-}{C^+},
\end{equation}
where $C^{\pm} = \sqrt{4 H^{4} + \left(3 U \pm \sqrt{4 H^{4} + 9 U^{2}}\right)^{2}}$. Thus, the boundary conditions \eqref{eq:bc_Up} become
\begin{subequations} \label{boundarynumerical1}
\begin{align}
    &h(x_L,t) = 0, \quad u(x_L,t) = 0, \quad \left. \frac{\partial u}{\partial x}\right|_{x=x_L} = 0, \label{boundarynumerical1_left}\\
    &u(x_R,t) = 0. \label{boundarynumerical1_right}
\end{align}
\end{subequations}
It is straightforward to check that the chosen constants \eqref{boundarycst1} satisfy the conditions of \cref{theorem:well-posednes_ibvp_UP}, and hence the boundary conditions \eqref{boundarynumerical1} are well-posed.

Let us now utilize the SAT method to impose the boundary conditions \eqref{boundarynumerical1} in a stable manner. As previously mentioned, we consider one boundary at a time in a single element. Starting from the left boundary $x = x_L$, a single element semi-discrete numerical approximation of the IBVP \eqref{linearizedserre}, \eqref{eq:initial_condition} and \eqref{boundarynumerical1_left} is given by
\begin{subequations} \label{boundarysat1}
\begin{align}
    &\frac{d\mathbf{h}}{dt} + D_x(H\mathbf{u} + U\mathbf{h}) = \tau_0 M_x^{-1} \mathbf{e}_L \mathbf{e}_L^{\top} (U \mathbf{h}) + \theta_0 M_x^{-1} \mathbf{e}_L \mathbf{e}_L^{\top} (H\mathbf{u}), \label{boundarysat1_1} \\
    &\frac{d\mathbf{u}}{dt} + D_x\left(g\mathbf{h} + U\mathbf{u} - \frac{H^2U}{3}D_x^2\mathbf{u} - \frac{H^2}{3}D_x\left( \frac{d\mathbf{u}}{dt}\right)\right) =
    \gamma_0 M_x^{-1}D_x^{\top} \mathbf{e}_L \mathbf{e}_L^{\top} \left( H^2 \frac{d\mathbf{u}}{dt}\right) \label{boundarysat1_2} \\
    &+\sigma_0 M_x^{-1} \mathbf{e}_L \mathbf{e}_L^{\top} M_x^{-1} \mathbf{e}_L \mathbf{e}_L^{\top} \left(H^2 \frac{d\mathbf{u}}{dt} \right) 
    + \eta_0  M_x^{-1}  \mathbf{e}_L \mathbf{e}_L^{\top} (U \mathbf{u}) 
    + \mu_0 M_x^{-1} D_x^{\top} \mathbf{e}_L \mathbf{e}_L^{\top} (H^2UD_x\mathbf{u}) \nonumber \\
    &+ \rho_0 M_x^{-1} (D_x^{\top})^2 \mathbf{e}_L \mathbf{e}_L^{\top} (H^2U\mathbf{u}), \nonumber
\end{align}
\end{subequations}
where $\tau_0, \theta_0, \gamma_0, \sigma_0, \eta_0, \mu_0, \rho_0$ are penalty parameters. The following theorem states that the numerical approximation \eqref{boundarysat1} is stable under a suitable choice of the penalty parameters.
\begin{theorem} \label{numboundarythm1}
Consider the numerical approximation \eqref{boundarysat1} with the penalty parameters $\tau_0 = \eta_0 = -\frac{1}{2}$, $\theta_0 = -1$, $\rho_0 = -\gamma_0 = -\sigma_0 = \frac{1}{3}$, $\mu_0 = -\frac{1}{6}$, and define the discrete energy
\begin{equation*} \label{boundaryenergy}
    E_{M_x}(t) = \frac{g}{2}\| \mathbf{h} \|^2_{M_x} + \frac{H}{2} \|\mathbf{u} \|^2_{M_x} + \frac{H^3}{6} \| \widetilde{D}_x\mathbf{u} \|^2_{M_x},
\end{equation*}
where $\widetilde{D}_x = D_x + M_x^{-1}\mathbf{e}_L\mathbf{e}_L^{\top}$. Considering only boundary contributions from the left boundary $x = x_L$, the discrete energy is conserved, that is
\begin{equation*}
    E_{M_x}(t) = E_{M_x}(0), \quad \forall t \ge 0.
\end{equation*}
\end{theorem}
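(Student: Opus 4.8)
The plan is to run the discrete energy method on \eqref{boundarysat1} directly, in the spirit of the proof of Theorem~\ref{discreteenergyconservationtheorem}. First I would left-multiply \eqref{boundarysat1_1} by $g\mathbf{h}^{\top}M_x$ and \eqref{boundarysat1_2} by $H\mathbf{u}^{\top}M_x$. The time-derivative terms give $\tfrac{g}{2}\tfrac{d}{dt}\|\mathbf{h}\|^{2}_{M_x}$ and $\tfrac{H}{2}\tfrac{d}{dt}\|\mathbf{u}\|^{2}_{M_x}$, and every inner product containing a spatial derivative is rewritten, via the SBP identities \eqref{sbpfirst}, \eqref{sbp_x}, \eqref{sbpsecond} and \eqref{sbpthird}, as a bulk term plus boundary contributions at nodes $1$ and $P+1$; by the hypothesis ``considering only boundary contributions from $x=x_L$'' we discard every node-$(P+1)$ contribution. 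On the right-hand sides, each SAT term carries the factor $\mathbf{e}_L\mathbf{e}_L^{\top}$, so after contracting with $g\mathbf{h}^{\top}M_x$ or $H\mathbf{u}^{\top}M_x$ it collapses to a scalar built from $h_1$, $u_1$, $(D_x\mathbf{u})_1$, $(D_x^{2}\mathbf{u})_1$, $(D_x\tfrac{d\mathbf{u}}{dt})_1$ and $(M_x^{-1})_{11}$.

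Next I would substitute the prescribed penalty values and verify the cancellations one channel at a time. In the $\mathbf{h}$-equation the boundary terms $-\tfrac{gU}{2}h_1^{2}$ and $-gH\,h_1u_1$ (produced by $gU\langle\mathbf{h},D_x\mathbf{h}\rangle_{M_x}$ and $gH\langle\mathbf{h},D_x\mathbf{u}\rangle_{M_x}$) are cancelled exactly by the $\tau_0=-\tfrac12$ and $\theta_0=-1$ penalties, so that equation reduces to $\tfrac{g}{2}\tfrac{d}{dt}\|\mathbf{h}\|^{2}_{M_x}-gH\langle D_x\mathbf{h},\mathbf{u}\rangle_{M_x}=0$. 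In the $\mathbf{u}$-equation the advective boundary term $-\tfrac{HU}{2}u_1^{2}$ (from $HU\langle\mathbf{u},D_x\mathbf{u}\rangle_{M_x}$) is cancelled by $\eta_0=-\tfrac12$; the dispersive boundary terms $-\tfrac{H^{3}U}{6}(D_x\mathbf{u})_1^{2}$ and $\tfrac{H^{3}U}{3}u_1(D_x^{2}\mathbf{u})_1$ produced by $-\tfrac{H^{3}U}{3}\langle\mathbf{u},D_x^{3}\mathbf{u}\rangle_{M_x}$ through \eqref{sbpthird} are cancelled by $\mu_0=-\tfrac16$ and $\rho_0=\tfrac13$; applying \eqref{sbpsecond} to $-\tfrac{H^{3}}{3}\langle\mathbf{u},D_x^{2}\tfrac{d\mathbf{u}}{dt}\rangle_{M_x}$ together with the $\gamma_0=-\tfrac13$ and $\sigma_0=-\tfrac13$ penalties leaves $\tfrac{H^{3}}{6}\tfrac{d}{dt}\|D_x\mathbf{u}\|^{2}_{M_x}$ plus the node-$1$ remainder $\tfrac{H^{3}}{3}\big[(D_x\mathbf{u})_1(\tfrac{d\mathbf{u}}{dt})_1+u_1(D_x\tfrac{d\mathbf{u}}{dt})_1+(M_x^{-1})_{11}u_1(\tfrac{d\mathbf{u}}{dt})_1\big]$; and $gH\langle\mathbf{u},D_x\mathbf{h}\rangle_{M_x}$ leaves $-gH\,u_1h_1-gH\langle D_x\mathbf{u},\mathbf{h}\rangle_{M_x}$, which has no matching penalty.

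To close the argument I would make two identifications. Since $\widetilde{D}_x\mathbf{u}=D_x\mathbf{u}+(M_x^{-1})_{11}u_1\mathbf{e}_L$, expanding gives $\|\widetilde{D}_x\mathbf{u}\|^{2}_{M_x}=\|D_x\mathbf{u}\|^{2}_{M_x}+2u_1(D_x\mathbf{u})_1+(M_x^{-1})_{11}u_1^{2}$, and differentiating in time shows that $\tfrac{H^{3}}{6}\tfrac{d}{dt}\|D_x\mathbf{u}\|^{2}_{M_x}$ plus the node-$1$ remainder above equals precisely $\tfrac{H^{3}}{6}\tfrac{d}{dt}\|\widetilde{D}_x\mathbf{u}\|^{2}_{M_x}$. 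Adding the reduced $\mathbf{h}$- and $\mathbf{u}$-equations, the two coupling contributions combine through \eqref{sbp2} into $-gH(\langle D_x\mathbf{h},\mathbf{u}\rangle_{M_x}+\langle D_x\mathbf{u},\mathbf{h}\rangle_{M_x})=-gH\,\mathbf{u}^{\top}B\mathbf{h}$, which (dropping the $x_R$ endpoint) equals $gH\,u_1h_1$ and cancels the residual $-gH\,u_1h_1$. What survives is exactly $\tfrac{d}{dt}E_{M_x}(t)=0$, and integrating in time gives $E_{M_x}(t)=E_{M_x}(0)$.

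The only real difficulty is the bookkeeping: five SBP identities and seven SAT terms must be tracked simultaneously, and the whole argument works precisely because the prescribed penalty values are the ones that (i) annihilate the reflecting quadratic boundary forms at $x_L$ — the discrete shadow of the fact that $\operatorname{BT}(h,u)|_{x=x_L}=0$ whenever $h(x_L)=u(x_L)=\partial_x u(x_L)=0$ — and (ii) promote $\|D_x\mathbf{u}\|^{2}_{M_x}$ to $\|\widetilde{D}_x\mathbf{u}\|^{2}_{M_x}$ inside the energy. An equivalent organization, paralleling Lemma~\ref{interfacesatlemma} and the proof of Theorem~\ref{discreteenergyconservationtheorem}, would first recast \eqref{boundarysat1} in a penalized-operator form built from $\widetilde{D}_x$ and then invoke the energy method; the algebraic content is identical.
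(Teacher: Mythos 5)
Your proposal is correct and follows essentially the same route as the paper: left-multiply by $g\mathbf{h}^{\top}M_x$ and $H\mathbf{u}^{\top}M_x$, use the SBP identities \eqref{sbpfirst}--\eqref{sbpthird} to expose the node-$1$ boundary terms, check that the prescribed penalties cancel them, absorb the remaining node-$1$ contributions from the dispersive term into $\tfrac{H^3}{6}\tfrac{d}{dt}\|\widetilde{D}_x\mathbf{u}\|^2_{M_x}$, and let the two $gH$ coupling terms annihilate the residual $-gHh_1u_1$ via \eqref{sbp_x}. The only difference is cosmetic (you apply \eqref{sbp_x} to the $h$-equation's coupling term earlier than the paper does), and all of your sign and coefficient bookkeeping checks out.
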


\begin{proof}
Multiplying \eqref{boundarysat1_1} by $g\mathbf{h}^{\top}M_x$ and \eqref{boundarysat1_2} by $H\mathbf{u}^{\top}M_x$ yields
\begin{align}\label{boundarysat1proof1}
\begin{split}
&g\left \langle \mathbf{h}, \frac{d\mathbf{h}}{dt} \right \rangle_{M_x} + gH\left \langle \mathbf{h}, D_x \mathbf{u} \right \rangle_{M_x} + gU\left \langle \mathbf{h}, D_x \mathbf{h} \right \rangle_{M_x} = -\frac{gU}{2}h_1^2 -gH h_1 u_1, \\
&H\left \langle \mathbf{u}, \frac{d\mathbf{u}}{dt} \right \rangle_{M_x} + gH\left \langle \mathbf{u}, D_x \mathbf{h} \right \rangle_{M_x} + HU\left \langle \mathbf{u}, D_x \mathbf{u} \right \rangle_{M_x}  - \frac{H^3U}{3}\left \langle \mathbf{u}, D_x^3 \mathbf{u} \right \rangle_{M_x} - \frac{H^3}{3}\left \langle \mathbf{u}, D_x^2 \frac{d\mathbf{u}}{dt} \right \rangle_{M_x} \\ 
&= -\frac{H^3}{3}\mathbf{u}^{\top}D_x^{\top}\mathbf{e}_L \mathbf{e}_L^{\top} \frac{d\mathbf{u}}{dt} - \frac{H^3}{3} \mathbf{u}^{\top} \mathbf{e}_L \mathbf{e}_L^{\top} M_x^{-1} \mathbf{e}_L \mathbf{e}_L^{\top} \frac{d\mathbf{u}}{dt} - \frac{HU}{2}u_1^2 -\frac{H^3U}{6}\left(D_x\mathbf{u} \right)_1^2 + \frac{H^3U}{3}\left(D_x^2\mathbf{u}\right)_1u_1.
\end{split}
\end{align}

Applying the identities \eqref{sbpfirst}-\eqref{sbpthird} to \eqref{boundarysat1proof1} gives
\begin{subequations} \label{boundarysat1proof2}
\begin{align}
&g\left \langle \mathbf{h}, \frac{d\mathbf{h}}{dt} \right \rangle_{M_x} + gH\left \langle \mathbf{h}, D_x \mathbf{u} \right \rangle_{M_x} = -gH h_1 u_1 \label{boundarysat1proof2_1}, \\
&H\left \langle \mathbf{u}, \frac{d\mathbf{u}}{dt} \right \rangle_{M_x} + \frac{H^3}{3}\left \langle \left(D_x + M_x^{-1}\mathbf{e}_L\mathbf{e}_L^{\top} \right)\mathbf{u}, \left( D_x + M_x^{-1}\mathbf{e}_L\mathbf{e}_L^{\top}\right) \frac{d\mathbf{u}}{dt} \right \rangle_{M_x} + gH \left \langle \mathbf{u}, D_x \mathbf{h} \right \rangle_{M_x} = 0,\label{boundarysat1proof2_2}
\end{align}
\end{subequations}
where we have also used the obvious equations
\begin{equation*}
    u_1\left(D_x \frac{d\mathbf{u}}{dt} \right)_1 = \mathbf{u}^{\top} \mathbf{e}_L\mathbf{e}_L^{\top} D_x \frac{d\mathbf{u}}{dt} \quad \text{and} \quad \left \langle D_x\mathbf{u}, D_x\frac{d\mathbf{u}}{dt} \right \rangle_{M_x} = \mathbf{u}^{\top} D_x^{\top} M_x D_x \frac{d\mathbf{u}}{dt}.
\end{equation*}
Summing \eqref{boundarysat1proof2_1} and \eqref{boundarysat1proof2_2} together, and then using \eqref{sbp_x} gives 
\begin{equation*}
    \frac{d}{dt} \left(\frac{g}{2}\| \mathbf{h} \|^2_{M_x} + \frac{H}{2} \|\mathbf{u} \|^2_{M_x} + \frac{H^3}{6} \| \widetilde{D}_x\mathbf{u} \|^2_{M_x}\right) = 0.
\end{equation*}
where $\widetilde{D}_x = D_x + M_x^{-1}\mathbf{e}_L\mathbf{e}_L^{\top}$. Thus, we have 
\begin{equation*}
    \frac{d}{dt} E_{M_x}(t) = 0,
\end{equation*}
and we conclude the proof by time integrating this equation.
\end{proof}

For the right boundary $x = x_R$, we need to impose the boundary condition \eqref{boundarynumerical1_right}. A single element semi-discrete numerical approximation of the IBVP \eqref{linearizedserre}, \eqref{eq:initial_condition} and \eqref{boundarynumerical1_right} is as follows:
\begin{subequations} \label{boundarysat2}
\begin{align}
    &\frac{d\mathbf{h}}{dt} + D_x(H\mathbf{u} + U\mathbf{h}) = \theta_N M_x^{-1} \mathbf{e}_R \mathbf{e}_R^{\top} (H\mathbf{u}), \label{boundarysat2_1} \\
    &\frac{d\mathbf{u}}{dt} + D_x\left(g\mathbf{h} + U\mathbf{u} - \frac{H^2U}{3}D_x^2\mathbf{u} - \frac{H^2}{3}D_x\left( \frac{d\mathbf{u}}{dt}\right)\right) =
    \gamma_N M_x^{-1}D_x^{\top} \mathbf{e}_R \mathbf{e}_R^{\top} \left( H^2 \frac{d\mathbf{u}}{dt}\right) \label{boundarysat2_2} \\
    &+\sigma_N M_x^{-1} \mathbf{e}_R \mathbf{e}_R^{\top} M_x^{-1} \mathbf{e}_R \mathbf{e}_R^{\top} \left(H^2 \frac{d\mathbf{u}}{dt} \right) 
    + \rho_N M_x^{-1} (D_x^{\top})^2 \mathbf{e}_R \mathbf{e}_R^{\top} (H^2U\mathbf{u}), \nonumber
\end{align}
\end{subequations}
where $\theta_N, \gamma_N, \sigma_N, \rho_N$ are penalty parameters. The theorem below states that the numerical approximation \eqref{boundarysat2} is stable under a specific choice of the penalty parameters.
\begin{theorem} \label{numboundarythm2}
Consider the numerical approximation \eqref{boundarysat2} with the penalty parameters $\theta_N = 1$, $\gamma_N = -\sigma_N = -\rho_N = \frac{1}{3}$, and define the discrete energy
\begin{equation*} \label{boundaryenergy2}
    E_{M_x}(t) = \frac{g}{2}\| \mathbf{h} \|^2_{M_x} + \frac{H}{2} \|\mathbf{u} \|^2_{M_x} + \frac{H^3}{6} \| \widetilde{D}_x\mathbf{u} \|^2_{M_x},
\end{equation*}
where $\widetilde{D}_x = D_x - M_x^{-1}\mathbf{e}_R\mathbf{e}_R^{\top}$. Considering only boundary conditions from the right boundary $x = x_R$, the discrete energy is bounded by the discrete energy of the initial data, that is
\begin{equation*}
    E_{M_x}(t) \leq E_{M_x}(0), \quad \forall t \ge 0.
\end{equation*}
\end{theorem}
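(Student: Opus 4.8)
The plan is to follow the same template as the proof of Theorem~\ref{numboundarythm1}, replacing the boundary node $j=1$ by $j=P+1$ and discarding all contributions from the inactive left boundary. First I would left-multiply the height equation \eqref{boundarysat2_1} by $g\mathbf{h}^{\top}M_x$ and the velocity equation \eqref{boundarysat2_2} by $H\mathbf{u}^{\top}M_x$, inserting the prescribed penalty parameters $\theta_N=1$ and $\gamma_N=-\sigma_N=-\rho_N=\tfrac13$ at once. On the left-hand sides, the volume terms $gU\langle\mathbf{h},D_x\mathbf{h}\rangle_{M_x}$, $HU\langle\mathbf{u},D_x\mathbf{u}\rangle_{M_x}$, $\langle\mathbf{u},D_x^3\mathbf{u}\rangle_{M_x}$ and $\langle\mathbf{u},D_x^2\tfrac{d\mathbf{u}}{dt}\rangle_{M_x}$ are rewritten using the SBP identities \eqref{sbp_x} and \eqref{sbpfirst}--\eqref{sbpthird}, keeping only the data at $j=P+1$. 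This yields two scalar identities whose right-hand sides contain the penalty contributions $\theta_N gH\,h_{P+1}u_{P+1}$, $\gamma_N H^3(D_x\mathbf{u})_{P+1}(\tfrac{d\mathbf{u}}{dt})_{P+1}$, $\sigma_N H^3(M_x^{-1})_{P+1,P+1}u_{P+1}(\tfrac{d\mathbf{u}}{dt})_{P+1}$ and $\rho_N H^3U(D_x^2\mathbf{u})_{P+1}u_{P+1}$.

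The core of the argument is then the bookkeeping of boundary terms. The term weighted by $\rho_N=-\tfrac13$ exactly cancels the mixed boundary contribution $-\tfrac{H^3U}{3}u_{P+1}(D_x^2\mathbf{u})_{P+1}$ produced by \eqref{sbpthird}. The terms weighted by $\gamma_N$ and $\sigma_N$, combined with the boundary term $-\tfrac{H^3}{3}u_{P+1}(D_x\tfrac{d\mathbf{u}}{dt})_{P+1}$ from \eqref{sbpsecond} and the volume term $\tfrac{H^3}{3}\langle D_x\mathbf{u},D_x\tfrac{d\mathbf{u}}{dt}\rangle_{M_x}$, should reassemble into $\tfrac{H^3}{3}\langle\widetilde{D}_x\mathbf{u},\widetilde{D}_x\tfrac{d\mathbf{u}}{dt}\rangle_{M_x}=\tfrac{H^3}{6}\tfrac{d}{dt}\|\widetilde{D}_x\mathbf{u}\|_{M_x}^2$ with $\widetilde{D}_x=D_x-M_x^{-1}\mathbf{e}_R\mathbf{e}_R^{\top}$; I expect verifying this identification to be the main obstacle, since it requires expanding $\langle\widetilde{D}_x\mathbf{u},\widetilde{D}_x\tfrac{d\mathbf{u}}{dt}\rangle_{M_x}$ into its four constituent pieces and matching them one-to-one with the penalty and SBP contributions, exactly as in \eqref{boundarysat1proof2_2} but with a minus sign on the rank-one correction because the active node is now $P+1$.

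Adding the two scalar identities, the cross terms $gH\langle\mathbf{h},D_x\mathbf{u}\rangle_{M_x}+gH\langle\mathbf{u},D_x\mathbf{h}\rangle_{M_x}$ collapse through \eqref{sbp_x} to $gH\,h_{P+1}u_{P+1}$, which is annihilated by the penalty term $\theta_N gH\,h_{P+1}u_{P+1}$ with $\theta_N=1$. After recognizing that the remaining time-derivative terms form $\tfrac{d}{dt}E_{M_x}(t)$, what survives is
\[
\frac{d}{dt}E_{M_x}(t)=-\frac{gU}{2}h_{P+1}^2-\frac{HU}{2}u_{P+1}^2-\frac{H^3U}{6}(D_x\mathbf{u})_{P+1}^2 .
\]
Since $g,H>0$ and $U>0$, the right-hand side is nonpositive, hence $\tfrac{d}{dt}E_{M_x}(t)\le 0$, and integrating over $[0,t]$ gives $E_{M_x}(t)\le E_{M_x}(0)$ for all $t\ge 0$. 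In contrast with Theorem~\ref{numboundarythm1}, at this outflow boundary the surviving diagonal quadratic form already has the dissipative sign, so no further penalty terms are needed and one obtains an energy bound rather than exact conservation.
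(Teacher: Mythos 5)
Your proposal is correct and follows essentially the same route as the paper: multiply by $g\mathbf{h}^{\top}M_x$ and $H\mathbf{u}^{\top}M_x$, apply the SBP identities \eqref{sbpfirst}--\eqref{sbpthird} keeping only the $j=P+1$ contributions, reassemble the $\gamma_N$, $\sigma_N$ penalty terms with the volume term into $\tfrac{H^3}{3}\langle\widetilde{D}_x\mathbf{u},\widetilde{D}_x\tfrac{d\mathbf{u}}{dt}\rangle_{M_x}$, and cancel the cross term with $\theta_N=1$. Your final dissipation rate $-\tfrac{gU}{2}h_{P+1}^2-\tfrac{HU}{2}u_{P+1}^2-\tfrac{H^3U}{6}(D_x\mathbf{u})_{P+1}^2$ is in fact the correct one; the coefficient $\tfrac{gU}{2}$ on $u_{P+1}^2$ appearing in the paper's displayed result is a typographical slip.
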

\begin{proof}
Similar to the previous case, we multiply \eqref{boundarysat2_1} by $g\mathbf{h}^{\top}M_x$ and \eqref{boundarysat2_2} by $H\mathbf{u}^{\top}M_x$. We have
\begin{align} \label{boundarysat2proof1}
\begin{split}
&g\left \langle \mathbf{h}, \frac{d\mathbf{h}}{dt} \right \rangle_{M_x} + gH\left \langle \mathbf{h}, D_x \mathbf{u} \right \rangle_{M_x} + gU\left \langle \mathbf{h}, D_x \mathbf{h} \right \rangle_{M_x} = gHh_{p+1}u_{p+1}, \\
&H\left \langle \mathbf{u}, \frac{d\mathbf{u}}{dt} \right \rangle_{M_x} + gH\left \langle \mathbf{u}, D_x \mathbf{h} \right \rangle_{M_x} + HU\left \langle \mathbf{u}, D_x \mathbf{u} \right \rangle_{M_x}  - \frac{H^3U}{3}\left \langle \mathbf{u}, D_x^3 \mathbf{u} \right \rangle_{M_x} - \frac{H^3}{3}\left \langle \mathbf{u}, D_x^2 \frac{d\mathbf{u}}{dt} \right \rangle_{M_x} \\ 
&= \frac{H^3}{3}\mathbf{u}^{\top}D_x^{\top}\mathbf{e}_R \mathbf{e}_R^{\top} \frac{d\mathbf{u}}{dt} - \frac{H^3}{3} \mathbf{u}^{\top} \mathbf{e}_R \mathbf{e}_R^{\top} M_x^{-1} \mathbf{e}_R \mathbf{e}_R^{\top} \frac{d\mathbf{u}}{dt} -\frac{H^3U}{3}(D^2_x \mathbf{u})_{p+1}u_{p+1}.
\end{split}
\end{align}
The identities \eqref{sbpfirst}-\eqref{sbpthird} imply that \eqref{boundarysat2proof1} becomes
\begin{align}\label{boundarysat2proof2}
\begin{split}
&g\left \langle \mathbf{h}, \frac{d\mathbf{h}}{dt} \right \rangle_{M_x} + gH\left \langle \mathbf{h}, D_x \mathbf{u} \right \rangle_{M_x} = gHh_{p+1}u_{p+1} - \frac{gU}{2}h^2_{p+1}, \\
&H\left \langle \mathbf{u}, \frac{d\mathbf{u}}{dt} \right \rangle_{M_x} + \frac{H^3}{3}\left \langle \left(D_x - M_x^{-1}\mathbf{e}_R\mathbf{e}_R^{\top} \right)\mathbf{u}, \left( D_x - M_x^{-1}\mathbf{e}_R\mathbf{e}_R^{\top}\right) \frac{d\mathbf{u}}{dt} \right \rangle_{M_x} + gH \left \langle \mathbf{u}, D_x \mathbf{h} \right \rangle_{M_x} \\ &= 
-\frac{gU}{2}u^2_{p+1} - \frac{H^3U}{6}(D_x \mathbf{u})^2_{p+1}.
\end{split}
\end{align}
Adding the equations in \eqref{boundarysat2proof2}, and then using \eqref{sbp_x} yields
\begin{equation*}
    \frac{d}{dt} \left(\frac{g}{2}\| \mathbf{h} \|^2_{M_x} + \frac{H}{2} \|\mathbf{u} \|^2_{M_x} + \frac{H^3}{6} \| \widetilde{D}_x\mathbf{u} \|^2_{M_x}\right) = -\frac{gU}{2}\left(h^2_{p+1} + u^2_{p+1}\right) - \frac{H^3U}{6}(D_x\mathbf{u})^2_{p+1} \le 0,
\end{equation*}
where $\widetilde{D}_x = D_x - M_x^{-1}\mathbf{e}_R\mathbf{e}_R^{\top}$. Time integrating this inequality completes the proof.
\end{proof}

We recall that the case $U = 0$ corresponds to \textbf{Case 1} in \cref{wellposedbcs}. In this case, choosing the constants $\alpha = \beta = 1$ in \eqref{bczero} gives the boundary conditions
\begin{equation*}
u(x_L,t) = 0 \quad \text{and} \quad u(x_R,t) = 0.
\end{equation*}
These boundary conditions can be immediately treated by substituting $U = 0$ in the numerical approximations \eqref{boundarysat1} and \eqref{boundarysat2} respectively.

\section{Error estimates}
In this section, we derive error estimates for the semi-discrete numerical approximation in the energy norm. For simplicity, the error analysis will be carried out in the two-element model $\Omega = \Omega^{-} \cup \Omega^{+}$ as in \cref{numericalinterfacetreatments}, and we only consider the boundary conditions \eqref{boundarynumerical1}. We note that the analysis can be easily generalized to multiple elements and other boundary conditions. 

Let the vectors $\mathbf{h}_{\text{ex}}$, $\mathbf{u}_{\text{ex}}$ denote the exact solution evaluated at the quadrature nodes. The pointwise error vectors can be written as
\begin{equation*}
\mathbf{E}_{\mathbf{h}} = \mathbf{h} - \mathbf{h}_{\text{ex}}, \quad \mathbf{E}_{\mathbf{u}} = \mathbf{u} - \mathbf{u}_{\text{ex}}.
\end{equation*}
Utilizing \eqref{interfacepenalized}, we obtain the error equations
\begin{subequations}
\begin{align} \label{errorequations1}
\frac{d\mathbf{E}_{\mathbf{h}}}{dt} + \widetilde{\mathbf{D}}(H\mathbf{E}_{\mathbf{u}} + U\mathbf{E}_{\mathbf{h}} ) + \alpha_h \mathbf{M}^{-1}\widetilde{\mathbf{B}}^{\top} \widetilde{\mathbf{B}}\mathbf{E}_\mathbf{h}  \ &= \mathbf{P}_b^{(1)} + \mathbf{T}^{(1)}, \\
\label{errorequations2}
\frac{d\mathbf{E}_{\mathbf{u}}}{dt} + \widetilde{\mathbf{D}}\left(g\mathbf{E}_{\mathbf{h}} + U\mathbf{E}_{\mathbf{u}} - \frac{H^2U}{3}\widetilde{\mathbf{D}}^2\mathbf{E}_{\mathbf{u}} - \frac{H^2}{3}\widetilde{\mathbf{D}}\left( \frac{d\mathbf{E}_{\mathbf{u}}}{dt}\right)\right) + \alpha_u \mathbf{M}^{-1}\widetilde{\mathbf{B}}^{\top} \widetilde{\mathbf{B}}\mathbf{E}_\mathbf{u}  &= \mathbf{P}_b^{(2)} + \mathbf{T}^{(2)},
\end{align}
\end{subequations}
where $\mathbf{T}^{(1)}$, $\mathbf{T}^{(2)}$ are quadrature nodes truncation errors and $\mathbf{P}^{(1)}_b$, $\mathbf{P}^{(2)}_b$ are boundary penalty terms. Theorems \ref{numboundarythm1} and \ref{numboundarythm2} imply that we have
\begin{align*}
&\mathbf{P}^{(1)}_b = \begin{bmatrix}
-\frac{1}{2} M_x^{-1} \mathbf{e}_L \mathbf{e}_L^{\top} \left(U \mathbf{E}_\mathbf{h}^-\right) - M_x^{-1} \mathbf{e}_L \mathbf{e}_L^{\top} \left(H \mathbf{E}_\mathbf{u}^- \right) \\ \\ M_x^{-1} \mathbf{e}_R \mathbf{e}_R^{\top}\left(H\mathbf{E}_\mathbf{u}^+ \right)
\end{bmatrix}, \\
&\mathbf{P}^{(2)}_b = \begin{bmatrix}
 -\frac{1}{3}M_x^{-1}D^{\top}_x \mathbf{e}_L \mathbf{e}_L^{\top}\left(H^2 \frac{d}{dt} \mathbf{E}_\mathbf{u}^- \right) - \frac{1}{3}M_x^{-1}\mathbf{e}_L \mathbf{e}_L^{\top}M_x^{-1}\mathbf{e}_L \mathbf{e}_L^{\top}\left(H^2 \frac{d}{dt} \mathbf{E}_\mathbf{u}^- \right) - \frac{1}{2}M_x^{-1}\mathbf{e}_L \mathbf{e}_L^{\top}\left(U \mathbf{E}_\mathbf{u}^- \right) \\
-\frac{1}{6}M_x^{-1}D^{\top}_x\mathbf{e}_L \mathbf{e}_L^{\top}\left( H^2UD_x \mathbf{E}_\mathbf{u}^-\right) + \frac{1}{3}M_x^{-1}\left(D^{\top}_x\right)^2\mathbf{e}_L \mathbf{e}_L^{\top}\left(H^2U \mathbf{E}_\mathbf{u}^-\right)  \\ \\ \frac{1}{3}M_x^{-1}D_x^{\top} \mathbf{e}_R\mathbf{e}_R^{\top}\left(H^2 \frac{d}{dt} \mathbf{E}_\mathbf{u}^+\right) 
-\frac{1}{3}M_x^{-1}\mathbf{e}_R\mathbf{e}_R^{\top}M_x^{-1}\mathbf{e}_R\mathbf{e}_R^{\top}\left(H^2 \frac{d}{dt}\mathbf{E}_\mathbf{u}^+\right) - \frac{1}{3}M_x^{-1}\left(D_x^{\top}\right)^2 \mathbf{e}_R\mathbf{e}_R^{\top}\left(H^2U\mathbf{E}_\mathbf{u}^+\right)
\end{bmatrix}.
\end{align*}

Let us now use Theorem \ref{truncationerrortheorem} to determine the order of the truncation errors $\mathbf{T}^{(1)}$ and $\mathbf{T}^{(2)}$. In the left hand side of \eqref{errorequations1}, the operator $\widetilde{\mathbf{D}}$ leads to a $O(\Delta x^{P})$ truncation error. The penalty term $\mathbf{P}^{(1)}_b$ in the right hand side of this equation does not contribute to the truncation error, and hence $\mathbf{T}^{(1)}$ is $O(\Delta x^{P})$. For \eqref{errorequations2}, the second term in the left hand side contains the operator $\widetilde{\mathbf{D}}^3$, which leads to a $O(\Delta x^{P-2})$ truncation error. In the right hand side of \eqref{errorequations2}, the only term in $\mathbf{P}^{(2)}_b$ that contributes to the truncation error is $-\frac{1}{6}M_x^{-1}D^{\top}_x\mathbf{e}_L \mathbf{e}_L^{\top}\left( H^2UD_x \mathbf{E}_\mathbf{u}^-\right)$. The operator $D_x$ in this term leads to a $O(\Delta x^{P})$ truncation error. Since both of the operators $M_x^{-1}$ and $D_x^{\top}$ have the factor $\Delta x^{-1}$, combining them together with the truncation error from $D_x$ gives a $O(\Delta x^{P-2})$ truncation. Hence, $\mathbf{T}^{(2)}$ is $O(\Delta x^{P-2})$.

We define the energy norm for the error
\begin{equation*}
\mathcal{E} = \left(\frac{g}{2} \left\|\mathbf{E}_{\mathbf{h}} \right\|^2 + \frac{H}{2} \left\|\mathbf{E}_{\mathbf{u}} \right\|^2 + \frac{H^3}{6}\left\|\widehat{\mathbf{D}}\mathbf{E}_{\mathbf{u}} \right\|^2\right)^{\frac{1}{2}},
\end{equation*}
where
\begin{equation*}
\widehat{\mathbf{D}} = \widetilde{\mathbf{D}} + \mathbf{M}^{-1} \begin{bmatrix}
\mathbf{\mathbf{e}_L\mathbf{e}_L^{\top}} & \mathbf{0} \\
\mathbf{0} & -\mathbf{e}_R\mathbf{e}_R^{\top}
\end{bmatrix}.
\end{equation*}
The following theorem gives a bound on the energy norm error.
\begin{theorem} 
For the semi-discrete numerical approximation \eqref{interfacepenalized} with the boundary treatments \eqref{boundarysat1} and \eqref{boundarysat2}, the energy norm error satisfies
\begin{equation*}
\mathcal{E} \leq \widetilde{C} \Delta x^{P-2} \int_{0}^{T} \left(\left|\frac{\partial^{P+1}h}{\partial x^{P+1}} \right|_{\infty} + \left|\frac{\partial^{P+1}u}{\partial x^{P+1}} \right|_{\infty}\right) \: dt,
\end{equation*}
where $\widetilde{C} > 0$ is independent of $\Delta x$.
\end{theorem}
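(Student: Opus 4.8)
The plan is to apply the discrete energy method to the error equations \eqref{errorequations1}--\eqref{errorequations2}, reusing the cancellations already established in the stability proofs of Theorems~\ref{discreteenergyconservationtheorem}, \ref{numboundarythm1} and \ref{numboundarythm2}, and then to close the estimate with Cauchy--Schwarz followed by time integration. Throughout I assume the scheme is initialized with the nodal values of the exact initial data \eqref{eq:initial_condition}, so that $\mathbf{E}_{\mathbf{h}}(0) = \mathbf{E}_{\mathbf{u}}(0) = \mathbf{0}$ and hence $\mathcal{E}(0) = 0$.

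\emph{Step 1: energy rate of the error.} Left-multiply \eqref{errorequations1} by $g\mathbf{E}_{\mathbf{h}}^{\top}\mathbf{M}$ and \eqref{errorequations2} by $H\mathbf{E}_{\mathbf{u}}^{\top}\mathbf{M}$, and add. On the left-hand side the terms built from $\widetilde{\mathbf{D}}$, $\widetilde{\mathbf{D}}^2$, $\widetilde{\mathbf{D}}^3$ and the interface upwinding, together with the boundary penalty contributions collected in $\mathbf{P}_b^{(1)}$ and $\mathbf{P}_b^{(2)}$, recombine exactly as in the proofs of Theorems~\ref{discreteenergyconservationtheorem}, \ref{numboundarythm1} and \ref{numboundarythm2}, via Lemmas~\ref{interfacesatlemma2} and \ref{interfacesatlemma} and the identities \eqref{sbpfirst}--\eqref{sbpthird}. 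The composite operator $\widehat{\mathbf{D}}$ appearing in $\mathcal{E}$ is precisely the operator that absorbs the two boundary SAT corrections (just as $\widetilde{\mathbf{D}}$ absorbs the interface SAT), and the $d\mathbf{E}_{\mathbf{u}}/dt$-penalties in $\mathbf{P}_b^{(2)}$ are the ones that turn $\tfrac{H^3}{6}\tfrac{d}{dt}\|\widetilde{\mathbf{D}}\mathbf{E}_{\mathbf{u}}\|_{\mathbf{M}}^2$ into $\tfrac{H^3}{6}\tfrac{d}{dt}\|\widehat{\mathbf{D}}\mathbf{E}_{\mathbf{u}}\|_{\mathbf{M}}^2$. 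The outcome is
\[
\frac{d}{dt}\mathcal{E}^2 \;=\; \mathcal{D} \;+\; g\left\langle \mathbf{E}_{\mathbf{h}}, \mathbf{T}^{(1)}\right\rangle_{\mathbf{M}} \;+\; H\left\langle \mathbf{E}_{\mathbf{u}}, \mathbf{T}^{(2)}\right\rangle_{\mathbf{M}},
\]
where $\mathcal{D}\le 0$ collects the interface upwind jump terms (weighted by $\alpha_h,\alpha_u\ge 0$) and the outflow boundary terms at $x=x_R$ (weighted by $U\ge 0$). Hence $\tfrac{d}{dt}\mathcal{E}^2 \le g\langle \mathbf{E}_{\mathbf{h}}, \mathbf{T}^{(1)}\rangle_{\mathbf{M}} + H\langle \mathbf{E}_{\mathbf{u}}, \mathbf{T}^{(2)}\rangle_{\mathbf{M}}$.

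\emph{Step 2: close the estimate and insert truncation orders.} By Cauchy--Schwarz in $\langle\cdot,\cdot\rangle_{\mathbf{M}}$ together with $\sqrt{g}\,\|\mathbf{E}_{\mathbf{h}}\|_{\mathbf{M}}\le\sqrt{2}\,\mathcal{E}$ and $\sqrt{H}\,\|\mathbf{E}_{\mathbf{u}}\|_{\mathbf{M}}\le\sqrt{2}\,\mathcal{E}$, the right-hand side is bounded by $C_1\,\mathcal{E}\,\big(\|\mathbf{T}^{(1)}\|_{\mathbf{M}}+\|\mathbf{T}^{(2)}\|_{\mathbf{M}}\big)$ with $C_1>0$ independent of $\Delta x$. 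Writing $\tfrac{d}{dt}\mathcal{E}^2 = 2\mathcal{E}\,\tfrac{d}{dt}\mathcal{E}$ and dividing by $2\mathcal{E}$ (a standard regularization argument covers $\mathcal{E}=0$) gives $\tfrac{d}{dt}\mathcal{E}\le \tfrac{C_1}{2}\big(\|\mathbf{T}^{(1)}\|_{\mathbf{M}}+\|\mathbf{T}^{(2)}\|_{\mathbf{M}}\big)$; integrating from $0$ to $T$ and using $\mathcal{E}(0)=0$ yields $\mathcal{E}\le \tfrac{C_1}{2}\int_0^T\big(\|\mathbf{T}^{(1)}\|_{\mathbf{M}}+\|\mathbf{T}^{(2)}\|_{\mathbf{M}}\big)\,dt$. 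Finally, Theorem~\ref{truncationerrortheorem} and the truncation-order discussion preceding this theorem give $\|\mathbf{T}^{(1)}\|_{\mathbf{M}}=O(\Delta x^{P})$ and $\|\mathbf{T}^{(2)}\|_{\mathbf{M}}=O(\Delta x^{P-2})$ with constants proportional to $\big|\partial_x^{P+1}h\big|_{\infty}$ and $\big|\partial_x^{P+1}u\big|_{\infty}$; substituting and retaining the dominant $O(\Delta x^{P-2})$ contribution produces the asserted bound with $\widetilde{C}$ independent of $\Delta x$.

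\emph{Main obstacle.} The delicate part is Step~1: one must verify, term by term, that combining the $\widetilde{\mathbf{D}}$-identities of Lemma~\ref{interfacesatlemma2} with the interface upwinding and with \emph{both} boundary SAT blocks produces exactly $\tfrac{d}{dt}\mathcal{E}^2$ (with the composite operator $\widehat{\mathbf{D}}$) plus only nonpositive dissipation — in particular that the mixed space--time term $-\tfrac{H^2}{3}\widetilde{\mathbf{D}}(d\mathbf{E}_{\mathbf{u}}/dt)$ and the matching $d\mathbf{E}_{\mathbf{u}}/dt$-penalties in $\mathbf{P}_b^{(2)}$ assemble with no leftover, so that the entire truncation content is carried by $\mathbf{T}^{(1)}$ and $\mathbf{T}^{(2)}$ on the right-hand side. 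Once this bookkeeping is complete, the remaining steps are routine.
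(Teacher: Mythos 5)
Your proposal is correct and follows essentially the same route the paper intends: the authors omit the proof precisely because it "closely follows the proofs of Theorems \ref{numboundarythm1} and \ref{numboundarythm2}," i.e.\ the discrete energy method applied to the error equations with the stability cancellations reused, followed by Cauchy--Schwarz, time integration from $\mathcal{E}(0)=0$, and insertion of the truncation-error orders from Theorem \ref{truncationerrortheorem}. The only point you share with the paper's glossing-over is that the mixed space--time term contributes a truncation error involving $\partial_x^{P+1}\partial_t u$, which both you and the theorem statement silently fold into the spatial-derivative bound (e.g.\ via the PDE).
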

\begin{proof}
The proof is omitted since it closely follows the proofs of Theorems \ref{numboundarythm1} and \ref{numboundarythm2}.
\end{proof}
Therefore, the error in the  energy norm converges to zero at the rate $O(\Delta {x}^{P-2})$.

\section{Numerical experiments} \label{numericalexperiments}

In this section, numerical experiments in 1D are presented to verify the theoretical results in this paper. Specifically, we first verify that our proposed numerical method is conservative, and then we present numerical convergence tests to demonstrate the accuracy of the numerical method. Finally, we perform numerical tests to demonstrate the advantage of high order accuracy in resolving highly oscillatory dispersive waves.

Let us now briefly describe the test problem that we will use for conservation and convergence tests. The analytical solution of the Cauchy problem for the Serre equations \eqref{linearizedserre} with the initial conditions 
\begin{equation*} \label{initcondtest}
h(x,0) = H + \frac{1}{\omega}\left(1 + \sin (\omega x)\right) \quad \text{and} \quad u(x,0) = U + \frac{c-U}{\omega H}\sin(\omega x)
\end{equation*}
is given by 
\begin{align} \label{analyticalsolution}
\begin{split}
    h(x,t) &= H + \frac{1}{\omega}\left(1 + \sin(\omega(x - ct))\right), \\
    u(x,t) &= U + \frac{c-U}{\omega H}\sin(\omega(x - ct)),
\end{split}
\end{align}
where $\omega = \frac{\sqrt{3(gH - (c - U)^2)}}{(c - U)H}$, and $c$ is chosen such that $U < c < U + \sqrt{gH}$. If the chosen domain is $\Omega = \left[x_L, x_L + \frac{2\pi n}{\omega}\right]$ for some $x_L \in \mathbb{R}$ and $n \in \{1,2,\ldots\}$, the analytical solution \eqref{analyticalsolution} satisfies the periodic boundary conditions \eqref{eq:peridic_boundary_conditions}, and hence we have a periodic boundary conditions problem. Otherwise, we have an IBVP with nonhomogeneous boundary conditions, where the boundary data are generated by the analytical solution \eqref{analyticalsolution}.

We use the model parameters $g = 9.8$, $H = 1.0$, $c = 0.5$, and choose two different background flow velocities $U = 0$ and $U = 0.2$. The domain $\Omega$ is chosen to be either $[0,\frac{2\pi}{\omega}]$ or $[0,1]$. We note that the domain $[0,\frac{2\pi}{\omega}]$ leads to a periodic boundary conditions problem, and $[0,1]$ leads to an IBVP with nonhomogeneous boundary conditions.

\subsection{Conservation test} \label{conservationtest}

We consider the periodic boundary conditions problem described in the previous section. The domain is discretized into $N = 20$ uniform elements, and we use polynomials of degree $P = 4$ to compute the numerical approximation. We consider two different choices of upwind parameters $\alpha_h = \alpha_u = 0$ and $\alpha_h = \alpha_u = 1$, and the numerical approximation is evolved using the classical explicit fourth order accurate Runge-Kutta method with the fixed time step $\Delta t = 10^{-3}$ until the final time $T = 1.0$.

To verify the discrete conservation properties of our numerical approximation, at each time step we compute the differences $g\Delta h=g\left\langle \mathbf{1},\mathbf{h}(t + \Delta t) \right\rangle_{\mathbf{M}} - g\left\langle \mathbf{1},\mathbf{h}(t) \right\rangle_{\mathbf{M}}$, $H\Delta u =H\left\langle \mathbf{1},\mathbf{u}(t + \Delta t) \right\rangle_{\mathbf{M}} - H\left\langle \mathbf{1},\mathbf{u}(t) \right\rangle_{\mathbf{M}}$, and $\Delta E=E_{\mathbf{M}}(t + \Delta t) - E_{\mathbf{M}}(t)$ using two different upwind parameters, $\alpha_h = \alpha_u = 0$ and $\alpha_h = \alpha_u = 1$, where $E_{\mathbf{M}}$ is given by \eqref{discreteinterfaceenergy}. The results are shown in Fig. \ref{conservationfig1}-\ref{conservationfig2}. We observe that when $\alpha_h = \alpha_u = 0$, all considered quantities are conserved up to machine precision. When $\alpha_h = \alpha_u = 1$, $E_{\mathbf{M}}$ is always negative, which implies that it is slightly decreasing. These observed results are consistent with Theorems \ref{discreteconservationtheorem} and \ref{discreteenergyconservationtheorem}.

\begin{figure}[H]
     \centering
     \begin{subfigure}[b]{0.325\textwidth}
         \includegraphics[width=\textwidth]{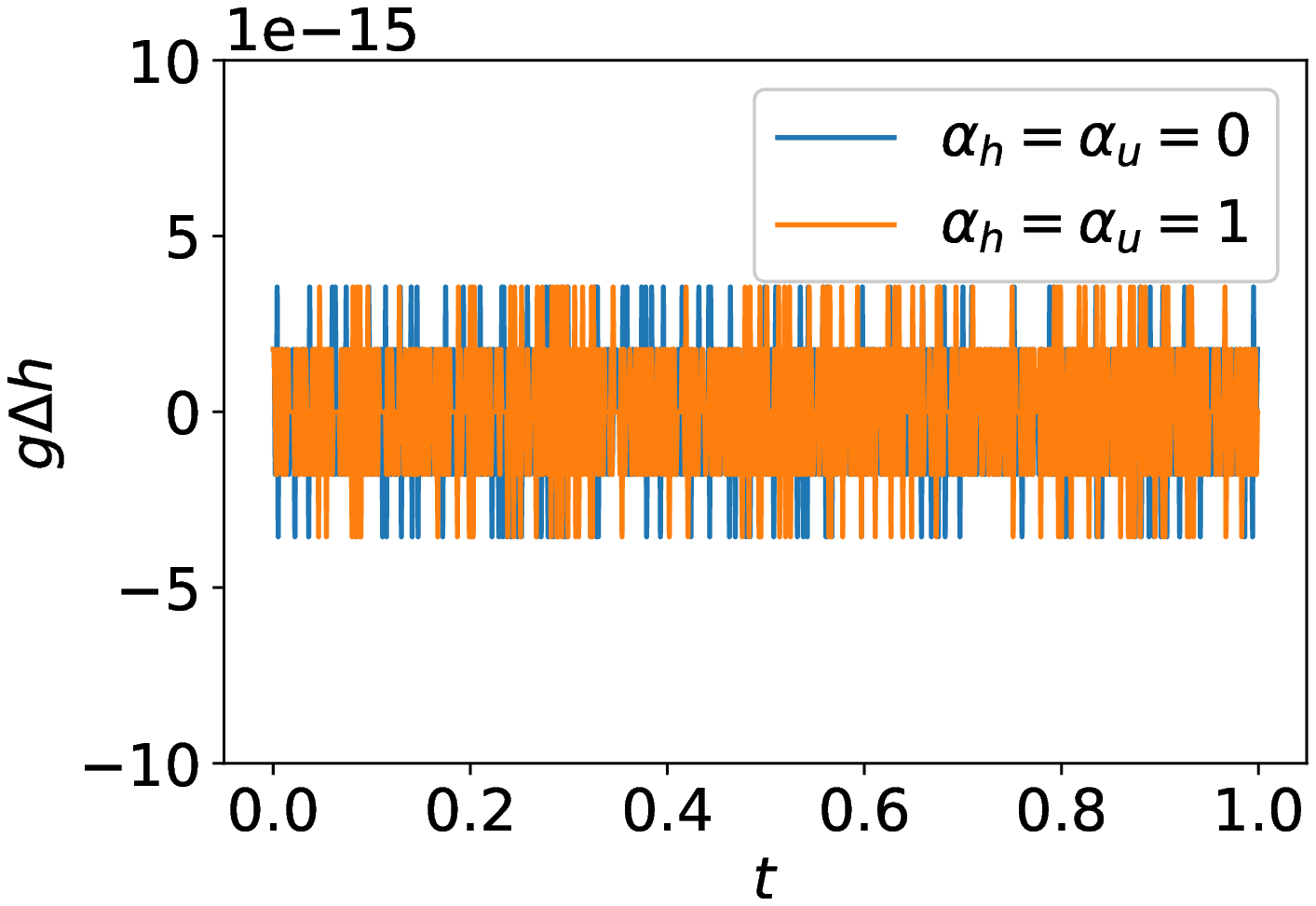}
         \caption{}
     \end{subfigure}
    \begin{subfigure}[b]{0.325\textwidth}
         \includegraphics[width=\textwidth]{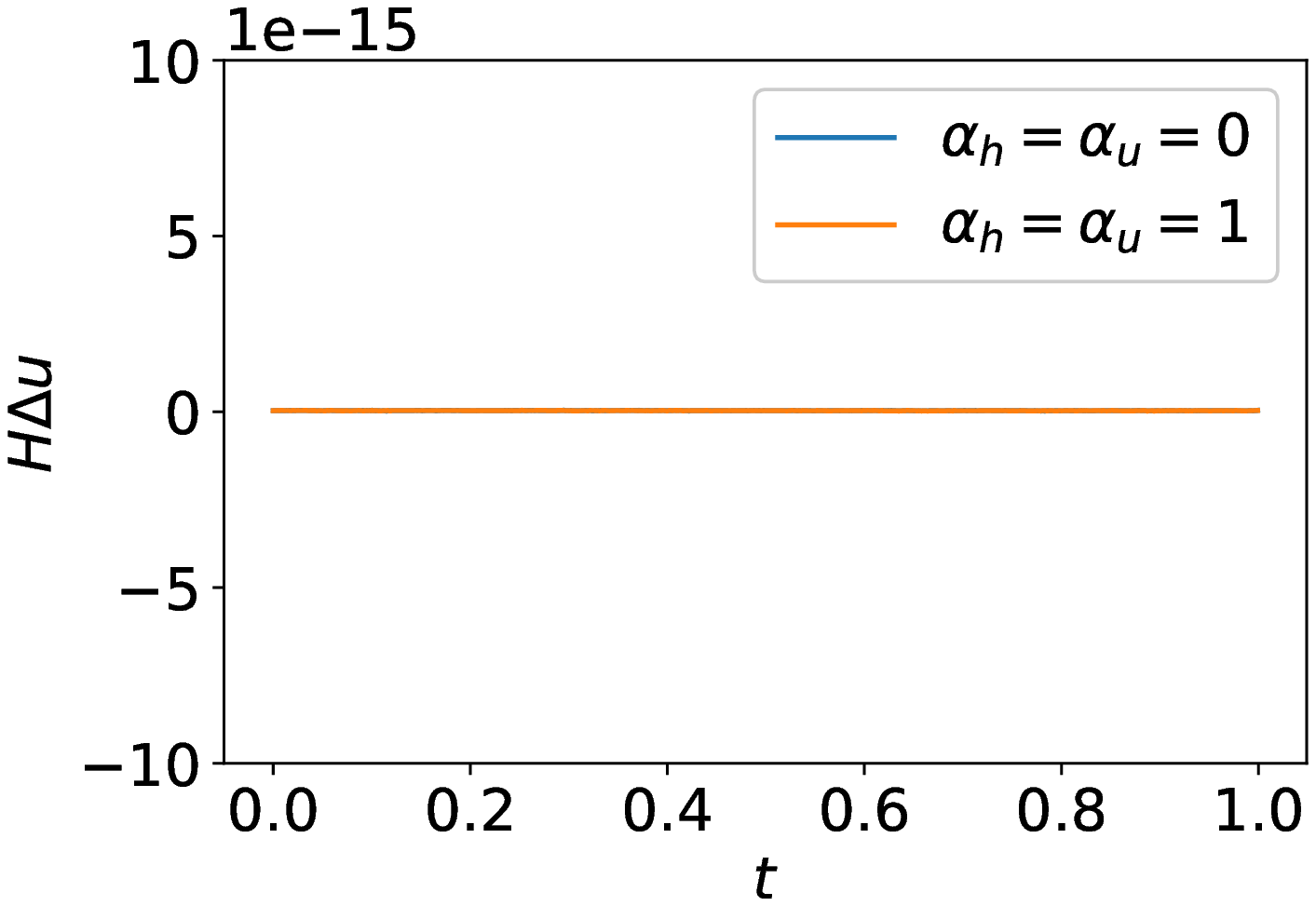}
         \caption{}
     \end{subfigure}
     \begin{subfigure}[b]{0.325\textwidth}
         \includegraphics[width=\textwidth]{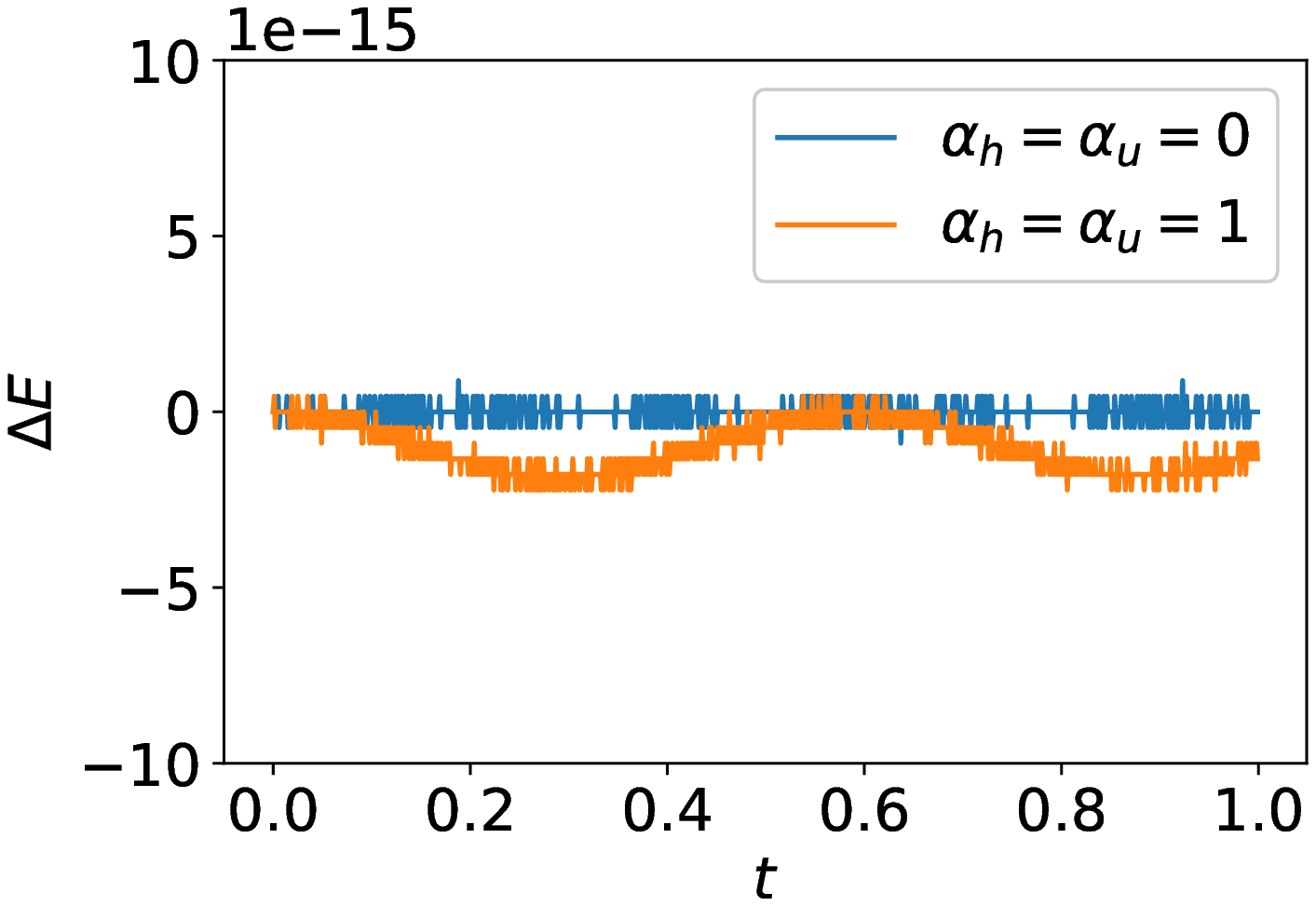}
         \caption{}
     \end{subfigure}
     \caption{Conservation test for (a) mass, (b) momentum and (c) energy achieving machine precision tolerance for the periodic boundary conditions problem with $U = 0$ for different upwind parameters $\alpha_h$ and $\alpha_u$.} \label{conservationfig1}
\end{figure}

\begin{figure}[H]
     \centering
     \begin{subfigure}[b]{0.325\textwidth}
         \includegraphics[width=\textwidth]{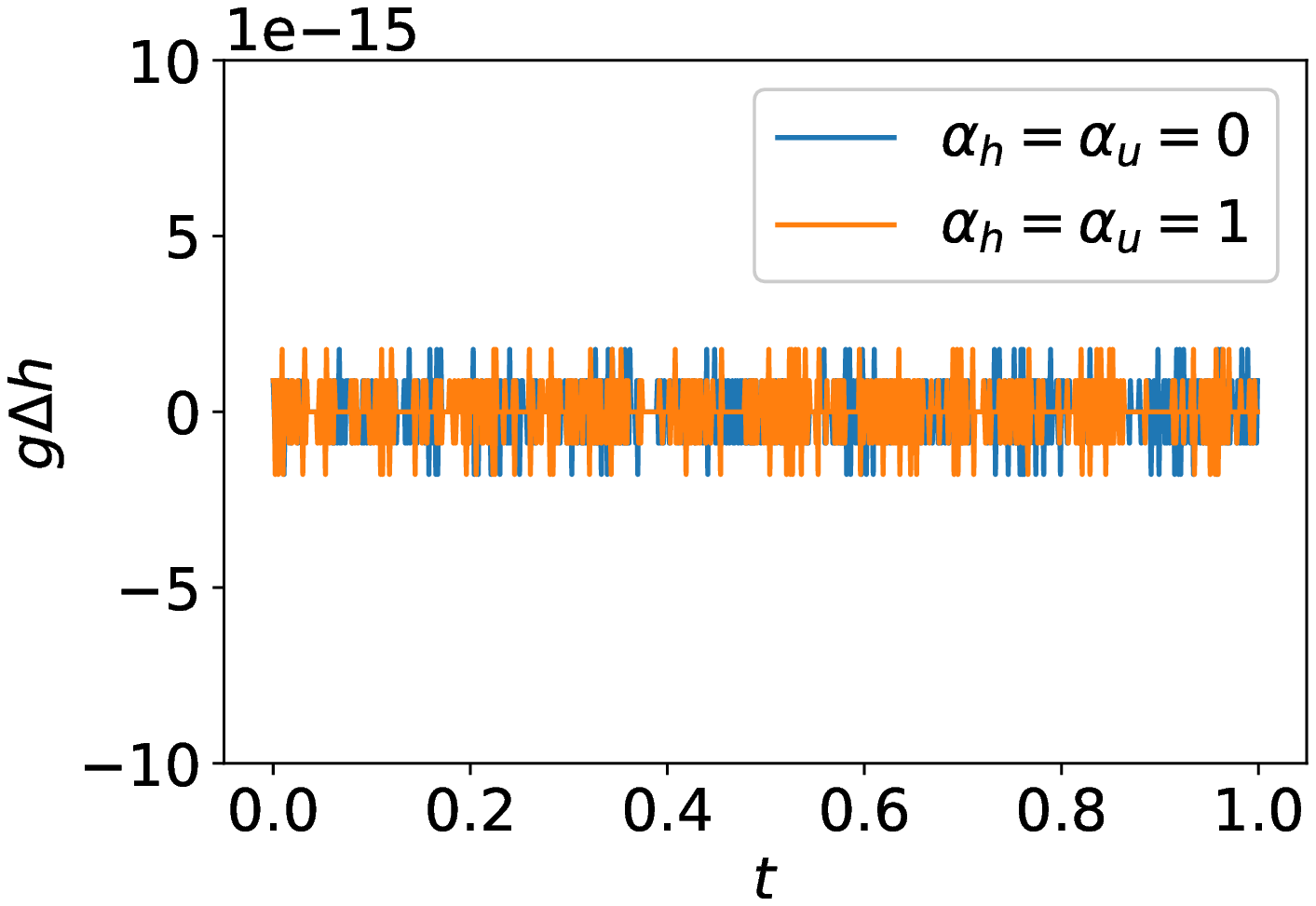}
          \caption{}
     \end{subfigure}
     \begin{subfigure}[b]{0.325\textwidth}
         \includegraphics[width=\textwidth]{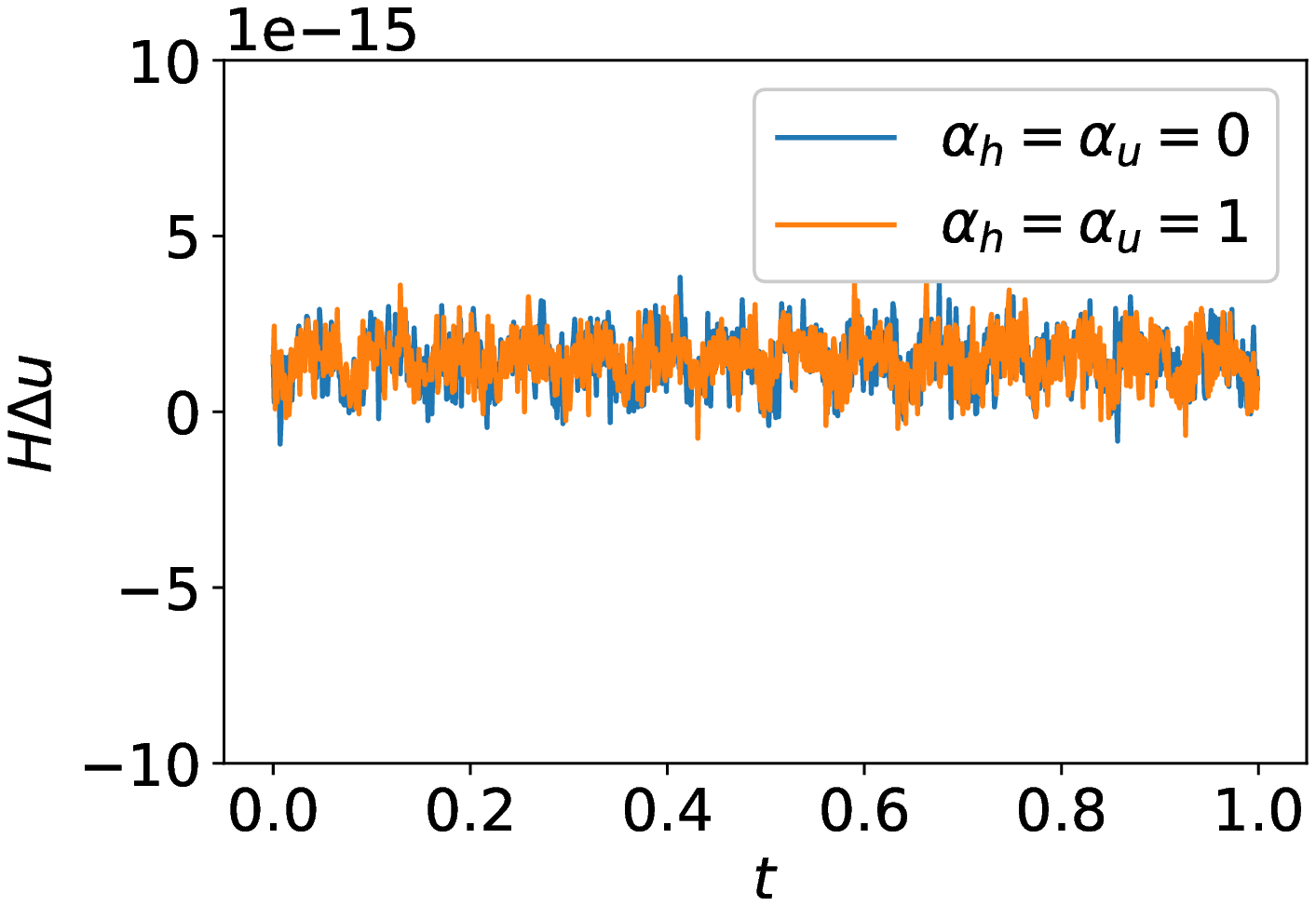}
         \caption{}
     \end{subfigure}
     \begin{subfigure}[b]{0.325\textwidth}
         \includegraphics[width=\textwidth]{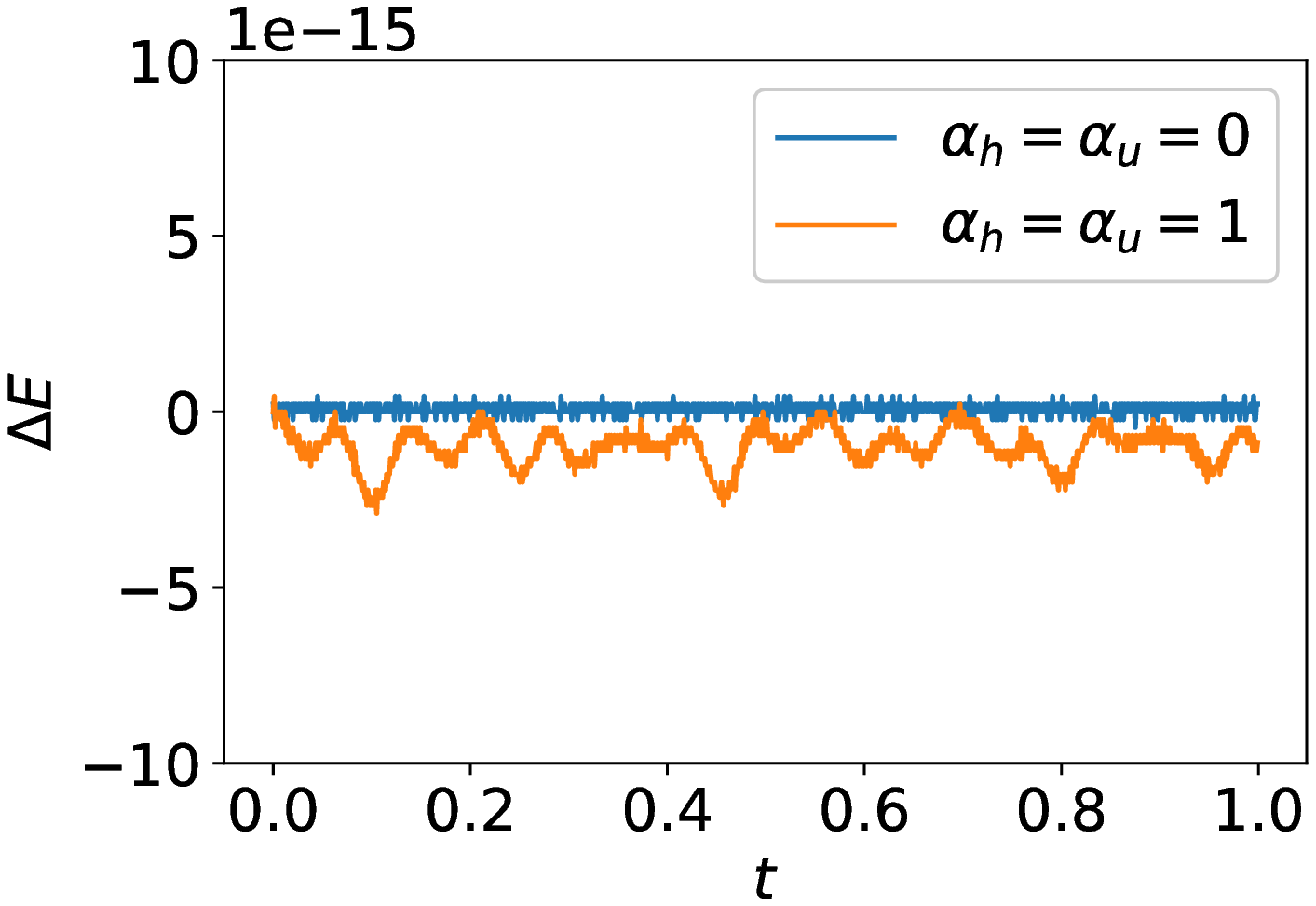}
          \caption{}
     \end{subfigure}
     \caption{Conservation test for (a) mass, (b) momentum and (c) energy achieving machine precision tolerance for the periodic boundary conditions problem with $U = 0.2$ for different upwind parameters $\alpha_h$ and $\alpha_u$.} \label{conservationfig2}
\end{figure}

\subsection{Periodic boundary conditions problem convergence test} \label{periodicnumericalexperiment}
We consider the periodic boundary conditions problem described in \cref{numericalexperiments}. The domain of the problem $\Omega$ is discretized into $N = 10,20,40,80$ elements, and polynomials of degree $P = 1,2,3,4$ are used to compute the numerical approximation. Similar to the previous section, we consider two different choices of upwind parameters $\alpha_h = \alpha_u = 0$ and $\alpha_h = \alpha_u = 1$, and the numerical approximation is evolved using the classical explicit fourth order accurate Runge-Kutta method until the final time $T = 0.1$. The time step is computed using the following:
\begin{equation} \label{CFL}
    \Delta t = \operatorname{CFL}\left(\frac{\Delta x}{P + 1}\right)^2, \quad \operatorname{CFL} = 0.1
\end{equation}
where $\Delta x$ is the element length. 

To investigate the convergence of the numerical errors, at the final time $T = 0.1$, we compute the $L^2$ error $\|e\|_{L^2(\Omega)} \approx \|\mathbf{u} - \mathbf{u}_{\text{ex}}\|_{\mathbf{M}}$, where $\mathbf{u}$ and $\mathbf{u}_{\text{ex}}$ are the numerical and exact solutions respectively. We have omitted the convergence plots of the height $h$ for brevity. The convergence plots and rates are depicted in Fig. \ref{periodicconvergenceplot1}-\ref{periodicconvergenceplot2} for $u$ and are given in Table \ref{periodicconvergencetable}. Overall, we observe that the numerical approximation for the velocity $u$ is $(P+1)$th order accurate when $P$ is even and $P$th order accurate when $P$ is odd. For the height $h$, it can be seen that when $\alpha_h = \alpha_u = 0$, the numerical approximation is $P$th order accurate, and we get improved convergence rates when $\alpha_h = \alpha_u = 1$. 

\begin{remark}
The time step is proportional to $(\Delta x)^2$ due to the presence of higher order spatial derivatives and the use of an explicit time stepping scheme. We note that employing an appropriate implicit time stepping scheme will eliminate this restriction, but this is not the main focus of this paper.
\end{remark}

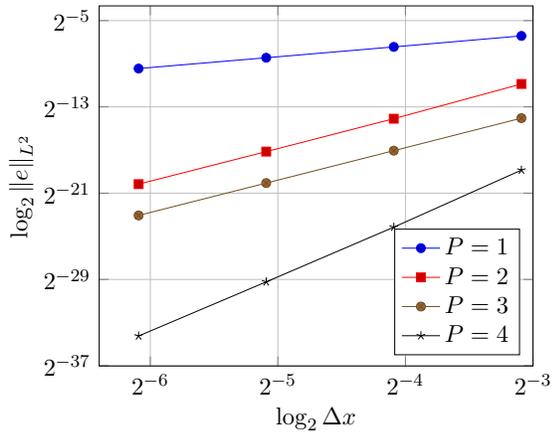
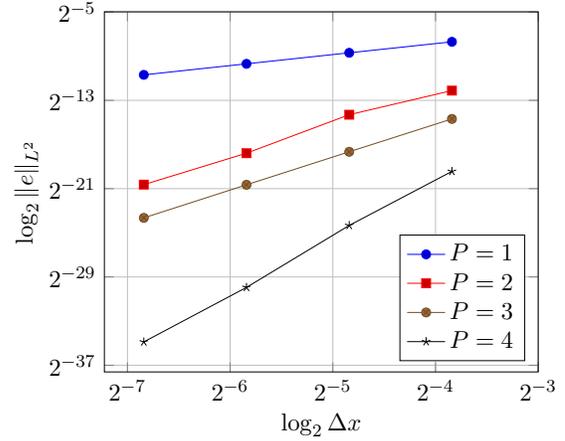
\begin{figure}[H]
  \begin{subfigure}[t]{.45\textwidth}
    \centering
    \resizebox{\textwidth}{!}{\begin{tikzpicture}
\begin{loglogaxis}[
xlabel={$\log_2 \Delta x$},
ylabel={$\log_2 \|e\|_{L^2}$},
  log basis y = {2},
  log basis x = {2},
  legend entries = {$P = 1$, $P = 2$, $P = 3$, $P = 4$},
  legend pos=south east,
  grid=major,
  xmax = 0.125,
]
\addplot table[x index=0,y index=1,col sep=comma] {periodic_u_zero.csv};
\addplot table[x index=0,y index=2,col sep=comma] {periodic_u_zero.csv};
\addplot table[x index=0,y index=3,col sep=comma] {periodic_u_zero.csv};
\addplot table[x index=0,y index=4,col sep=comma] {periodic_u_zero.csv};
\end{loglogaxis}
\end{tikzpicture}}
    \caption{}
  \end{subfigure} \hfill
  \begin{subfigure}[t]{.45\textwidth}
    \centering
    \resizebox{\textwidth}{!}{\begin{tikzpicture}
\begin{loglogaxis}[
xlabel={$\log_2 \Delta x$},
ylabel={$\log_2 \|e\|_{L^2}$},
  log basis y = {2},
  log basis x = {2},
  legend entries = {$P = 1$, $P = 2$, $P = 3$, $P = 4$},
  legend pos=south east,
  grid=major,
  xmax = 0.125,
]
\addplot table[x index=0,y index=1,col sep=comma] {periodic_u.csv};
\addplot table[x index=0,y index=2,col sep=comma] {periodic_u.csv};
\addplot table[x index=0,y index=3,col sep=comma] {periodic_u.csv};
\addplot table[x index=0,y index=4,col sep=comma] {periodic_u.csv};
\end{loglogaxis}
\end{tikzpicture}}
    \caption{}
  \end{subfigure}
\caption{Periodic boundary conditions problem convergence plots of u for (a) $U = 0$ and (b) $U = 0.2$ with upwind parameters $\alpha_h = \alpha_u = 0$ for different polynomial degrees $P$.} \label{periodicconvergenceplot1}
\end{figure}

\begin{figure}[H]
  \begin{subfigure}[t]{.45\textwidth}
    \centering
    \resizebox{\textwidth}{!}{\begin{tikzpicture}
\begin{loglogaxis}[
xlabel={$\log_2 \Delta x$},
ylabel={$\log_2 \|e\|_{L^2}$},
  log basis y = {2},
  log basis x = {2},
  legend entries = {$P = 1$, $P = 2$, $P = 3$, $P = 4$},
  legend pos=south east,
  grid=major,
  xmax = 0.125,
]
\addplot table[x index=0,y index=1,col sep=comma] {periodic_u_upwind_zero.csv};
\addplot table[x index=0,y index=2,col sep=comma] {periodic_u_upwind_zero.csv};
\addplot table[x index=0,y index=3,col sep=comma] {periodic_u_upwind_zero.csv};
\addplot table[x index=0,y index=4,col sep=comma] {periodic_u_upwind_zero.csv};
\end{loglogaxis}
\end{tikzpicture}}
    \caption{}
  \end{subfigure} \hfill
  \begin{subfigure}[t]{.45\textwidth}
    \centering
    \resizebox{\textwidth}{!}{\begin{tikzpicture}
\begin{loglogaxis}[
xlabel={$\log_2 \Delta x$},
ylabel={$\log_2 \|e\|_{L^2}$},
  log basis y = {2},
  log basis x = {2},
  legend entries = {$P = 1$, $P = 2$, $P = 3$, $P = 4$},
  legend pos=south east,
  grid=major,
  xmax = 0.125,
]
\addplot table[x index=0,y index=1,col sep=comma] {periodic_u_upwind.csv};
\addplot table[x index=0,y index=2,col sep=comma] {periodic_u_upwind.csv};
\addplot table[x index=0,y index=3,col sep=comma] {periodic_u_upwind.csv};
\addplot table[x index=0,y index=4,col sep=comma] {periodic_u_upwind.csv};
\end{loglogaxis}
\end{tikzpicture}}
    \caption{}
  \end{subfigure}
\caption{Periodic boundary conditions problem convergence plots of u for (a) $U = 0$ and (b) $U = 0.2$ with upwind parameters $\alpha_h = \alpha_u = 1$ for different polynomial degrees $P$.} \label{periodicconvergenceplot2}
\end{figure}

\begin{table}[H]
\centering
\begin{tabular}{|c|cc|cc|cc|cc|}
\hline
\multirow{2}{*}{$P$} & \multicolumn{2}{c|}{$U = 0$, $\alpha_h = \alpha_u = 0$} & \multicolumn{2}{c|}{$U = 0.2$, $\alpha_h = \alpha_u = 0$} & \multicolumn{2}{c|}{$U = 0$, $\alpha_h = \alpha_u = 1$} & \multicolumn{2}{c|}{$U = 0.2$, $\alpha_h = \alpha_u = 1$} \\ \cline{2-9} 
                     & \multicolumn{1}{c|}{$h$}              & $u$             & \multicolumn{1}{c|}{$h$}               & $u$              & \multicolumn{1}{c|}{$h$}              & $u$             & \multicolumn{1}{c|}{$h$}               & $u$              \\ \hline
1                    & \multicolumn{1}{c|}{$0.980$}          & $1.009$         & \multicolumn{1}{c|}{$0.982$}           & $0.998$          & \multicolumn{1}{c|}{$1.964$}          & $1.250$         & \multicolumn{1}{c|}{$1.952$}           & $1.151$          \\ \hline
2                    & \multicolumn{1}{c|}{$1.989$}          & $3.091$         & \multicolumn{1}{c|}{$2.232$}           & $2.904$          & \multicolumn{1}{c|}{$1.956$}          & $3.080$         & \multicolumn{1}{c|}{$2.478$}           & $2.900$          \\ \hline
3                    & \multicolumn{1}{c|}{$2.991$}          & $3.006$         & \multicolumn{1}{c|}{$2.990$}           & $2.988$          & \multicolumn{1}{c|}{$3.040$}          & $3.098$         & \multicolumn{1}{c|}{$4.676$}           & $3.041$          \\ \hline
4                    & \multicolumn{1}{c|}{$3.993$}          & $5.121$         & \multicolumn{1}{c|}{$4.129$}           & $5.195$          & \multicolumn{1}{c|}{$3.969$}          & $5.116$         & \multicolumn{1}{c|}{$4.139$}           & $5.219$          \\ \hline
\end{tabular} \caption{Periodic boundary conditions problem convergence rates of $h$ and $u$ for $U = 0$ and $0.2$ with $\alpha_h = \alpha_u = 0$, $\alpha_h = \alpha_u = 1$ for different polynomial degrees $P$.}
\label{periodicconvergencetable}
\end{table}

\subsection{Initial boundary value problem convergence test} We consider the IBVP described in \cref{numericalexperiments}. The numerical approximation is computed using the same discretization parameters as in \cref{periodicnumericalexperiment}.

We investigate the convergence of the numerical errors by computing the $L^2$ error at the final time $T = 0.1$. Fig. \ref{convplotIBVP1}-\ref{convplotIBVP2} and Table \ref{IBVPconvergencetable} depict the convergence plots and rates respectively. It is observed overall that the numerical approximation for the velocity $u$ is $P$th order accurate when $P$ is odd and $(P+1)$th order accurate when $P$ is even. The numerical approximation for the height $h$ is $P$th order accurate when $\alpha_h = \alpha_u = 0$, and better convergence rates are obtained when $\alpha_h = \alpha_u = 1$. These observed convergence rates are consistent with the periodic boundary conditions case.
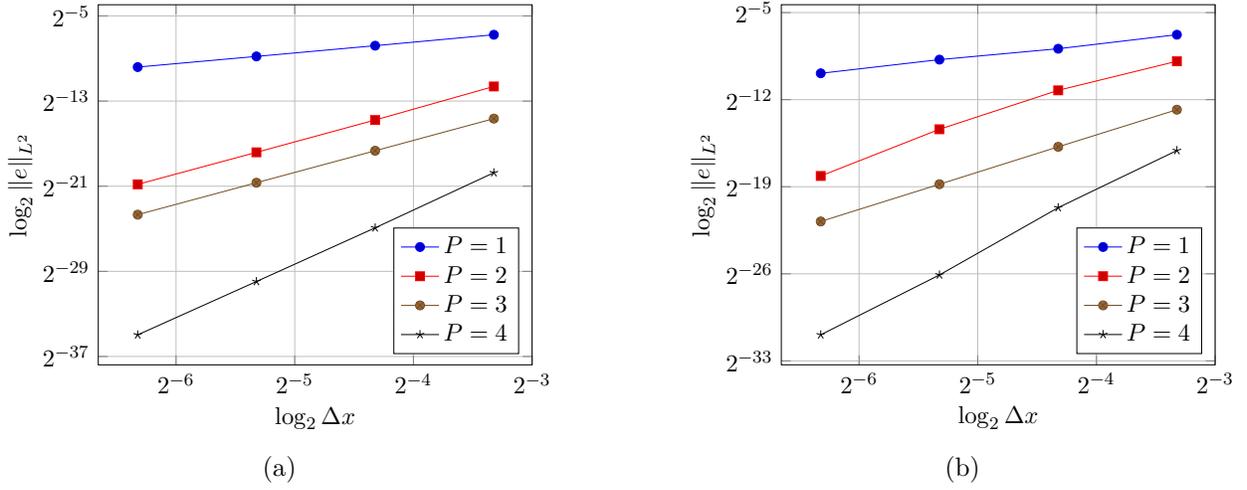
\begin{figure}[H]
  \begin{subfigure}[t]{.45\textwidth}
    \centering
    \resizebox{\textwidth}{!}{\begin{tikzpicture}
\begin{loglogaxis}[
xlabel={$\log_2 \Delta x$},
ylabel={$\log_2 \|e\|_{L^2}$},
  log basis y = {2},
  log basis x = {2},
  legend entries = {$P = 1$, $P = 2$, $P = 3$, $P = 4$},
  legend pos=south east,
  grid=major,
  xmax = 0.125,
]
\addplot table[x index=0,y index=1,col sep=comma] {IBVP_u_zero.csv};
\addplot table[x index=0,y index=2,col sep=comma] {IBVP_u_zero.csv};
\addplot table[x index=0,y index=3,col sep=comma] {IBVP_u_zero.csv};
\addplot table[x index=0,y index=4,col sep=comma] {IBVP_u_zero.csv};
\end{loglogaxis}
\end{tikzpicture}}
    \caption{}
  \end{subfigure} \hfill
  \begin{subfigure}[t]{.45\textwidth}
    \centering
    \resizebox{\textwidth}{!}{\begin{tikzpicture}
\begin{loglogaxis}[
xlabel={$\log_2 \Delta x$},
ylabel={$\log_2 \|e\|_{L^2}$},
  log basis y = {2},
  log basis x = {2},
  legend entries = {$P = 1$, $P = 2$, $P = 3$, $P = 4$},
  legend pos=south east,
  grid=major,
  xmax = 0.125,
]
\addplot table[x index=0,y index=1,col sep=comma] {IBVP_u.csv};
\addplot table[x index=0,y index=2,col sep=comma] {IBVP_u.csv};
\addplot table[x index=0,y index=3,col sep=comma] {IBVP_u.csv};
\addplot table[x index=0,y index=4,col sep=comma] {IBVP_u.csv};
\end{loglogaxis}
\end{tikzpicture}}
    \caption{}
  \end{subfigure}
\caption{Initial boundary value problem convergence plots of u for (a) $U = 0$ and (b) $U = 0.2$ with upwind parameters $\alpha_h = \alpha_u = 0$ for different polynomial degrees $P$.} \label{convplotIBVP1}
\end{figure}

\begin{figure}[H]
  \begin{subfigure}[t]{.45\textwidth}
    \centering
    \resizebox{\textwidth}{!}{\begin{tikzpicture}
\begin{loglogaxis}[
xlabel={$\log_2 \Delta x$},
ylabel={$\log_2 \|e\|_{L^2}$},
  log basis y = {2},
  log basis x = {2},
  legend entries = {$P = 1$, $P = 2$, $P = 3$, $P = 4$},
  legend pos=south east,
  grid=major,
  xmax = 0.125,
]
\addplot table[x index=0,y index=1,col sep=comma] {IBVP_u_upwind_zero.csv};
\addplot table[x index=0,y index=2,col sep=comma] {IBVP_u_upwind_zero.csv};
\addplot table[x index=0,y index=3,col sep=comma] {IBVP_u_upwind_zero.csv};
\addplot table[x index=0,y index=4,col sep=comma] {IBVP_u_upwind_zero.csv};
\end{loglogaxis}
\end{tikzpicture}}
    \caption{}
  \end{subfigure} \hfill
  \begin{subfigure}[t]{.45\textwidth}
    \centering
    \resizebox{\textwidth}{!}{\begin{tikzpicture}
\begin{loglogaxis}[
xlabel={$\log_2 \Delta x$},
ylabel={$\log_2 \|e\|_{L^2}$},
  log basis y = {2},
  log basis x = {2},
  legend entries = {$P = 1$, $P = 2$, $P = 3$, $P = 4$},
  legend pos=south east,
  grid=major,
  xmax = 0.125,
]
\addplot table[x index=0,y index=1,col sep=comma] {IBVP_u_upwind.csv};
\addplot table[x index=0,y index=2,col sep=comma] {IBVP_u_upwind.csv};
\addplot table[x index=0,y index=3,col sep=comma] {IBVP_u_upwind.csv};
\addplot table[x index=0,y index=4,col sep=comma] {IBVP_u_upwind.csv};
\end{loglogaxis}
\end{tikzpicture}}
    \caption{}
  \end{subfigure}
\caption{Initial boundary value problem convergence plots of u for (a) $U = 0$ and (b) $U = 0.2$ with upwind parameters $\alpha_h = \alpha_u = 1$ for different polynomial degrees $P$.} \label{convplotIBVP2}
\end{figure}

\begin{table}[H]
\centering
\begin{tabular}{|c|cc|cc|cc|cc|}
\hline
\multirow{2}{*}{$P$} & \multicolumn{2}{c|}{$U = 0$, $\alpha_h = \alpha_u = 0$} & \multicolumn{2}{c|}{$U = 0.2$, $\alpha_h = \alpha_u = 0$} & \multicolumn{2}{c|}{$U = 0$, $\alpha_h = \alpha_u = 1$} & \multicolumn{2}{c|}{$U = 0.2$, $\alpha_h = \alpha_u = 1$} \\ \cline{2-9} 
                     & \multicolumn{1}{c|}{$h$}              & $u$             & \multicolumn{1}{c|}{$h$}               & $u$              & \multicolumn{1}{c|}{$h$}              & $u$             & \multicolumn{1}{c|}{$h$}               & $u$              \\ \hline
1                    & \multicolumn{1}{c|}{$0.984$}          & $1.012$         & \multicolumn{1}{c|}{$0.962$}           & $1.018$          & \multicolumn{1}{c|}{$1.632$}          & $1.267$         & \multicolumn{1}{c|}{$1.898$}           & $1.109$          \\ \hline
2                    & \multicolumn{1}{c|}{$1.993$}          & $3.064$         & \multicolumn{1}{c|}{$2.201$}           & $3.078$          & \multicolumn{1}{c|}{$1.972$}          & $3.061$         & \multicolumn{1}{c|}{$2.384$}           & $2.880$          \\ \hline
3                    & \multicolumn{1}{c|}{$2.992$}          & $3.008$         & \multicolumn{1}{c|}{$2.995$}           & $2.996$          & \multicolumn{1}{c|}{$3.039$}          & $3.126$         & \multicolumn{1}{c|}{$3.811$}           & $3.034$          \\ \hline
4                    & \multicolumn{1}{c|}{$3.996$}          & $5.074$         & \multicolumn{1}{c|}{$4.187$}           & $4.985$          & \multicolumn{1}{c|}{$3.981$}          & $5.087$         & \multicolumn{1}{c|}{$4.244$}           & $4.874$          \\ \hline
\end{tabular}
\caption{Initial boundary value problem convergence rates of $h$ and $u$ for $U = 0$ and $0.2$ with $\alpha_h = \alpha_u = 0$, $\alpha_h = \alpha_u = 1$ for different polynomial degrees $P$.}
\label{IBVPconvergencetable}
\end{table}

\subsection{Gaussian initial condition test}

We consider the linearized Serre equations \eqref{linearizedserre} with the initial conditions
\begin{equation} \label{gaussianinitialcond}
h(x,0) = \frac{1}{5}e^{-25x^2}, \quad u(x,0) = 0,
\end{equation}
and the periodic boundary conditions \eqref{eq:peridic_boundary_conditions} in the bounded domain $\Omega = [-5,5]$. We choose the model parameters $g = 9.8$, $H = 1.0$, $U = 0.2$.

To illustrate the effectiveness of high order accuracy, we compute the numerical solution using three different discretization parameters $P = 2$, $N = 48$, $P = 2$, $N = 96$, and $P = 8$, $N = 16$, where $P$ and $N$ denote the polynomial degree and the number of elements respectively. The classical explicit fourth order accurate Runge-Kutta method with the time step $\Delta t$ determined by \eqref{CFL} is used to evolve the numerical solution until the final time $T = 6$. 

Snapshots of the numerical solution for $h$ and $u$ are depicted in Figure \ref{gaussian1} and \ref{gaussian2} respectively. It is observed that when $P = 2$, $N = 48$, the numerical solution has visible spurious oscillations. On the other hand, there are none such oscillations when $P = 8, N = 16$, even though they have the same degrees of freedom and take roughly the same amount of computation time. Doubling the degrees of freedom of $P = 2, N = 48$ by mesh refinement gives $P = 2, N = 96$, and the numerical solution in this case is similar to the high order case $P = 8, N = 16$ but it takes significantly higher computation time.

\begin{figure}[H] 
     \centering
     \begin{subfigure}[b]{0.325\textwidth}
         \includegraphics[width=\textwidth]{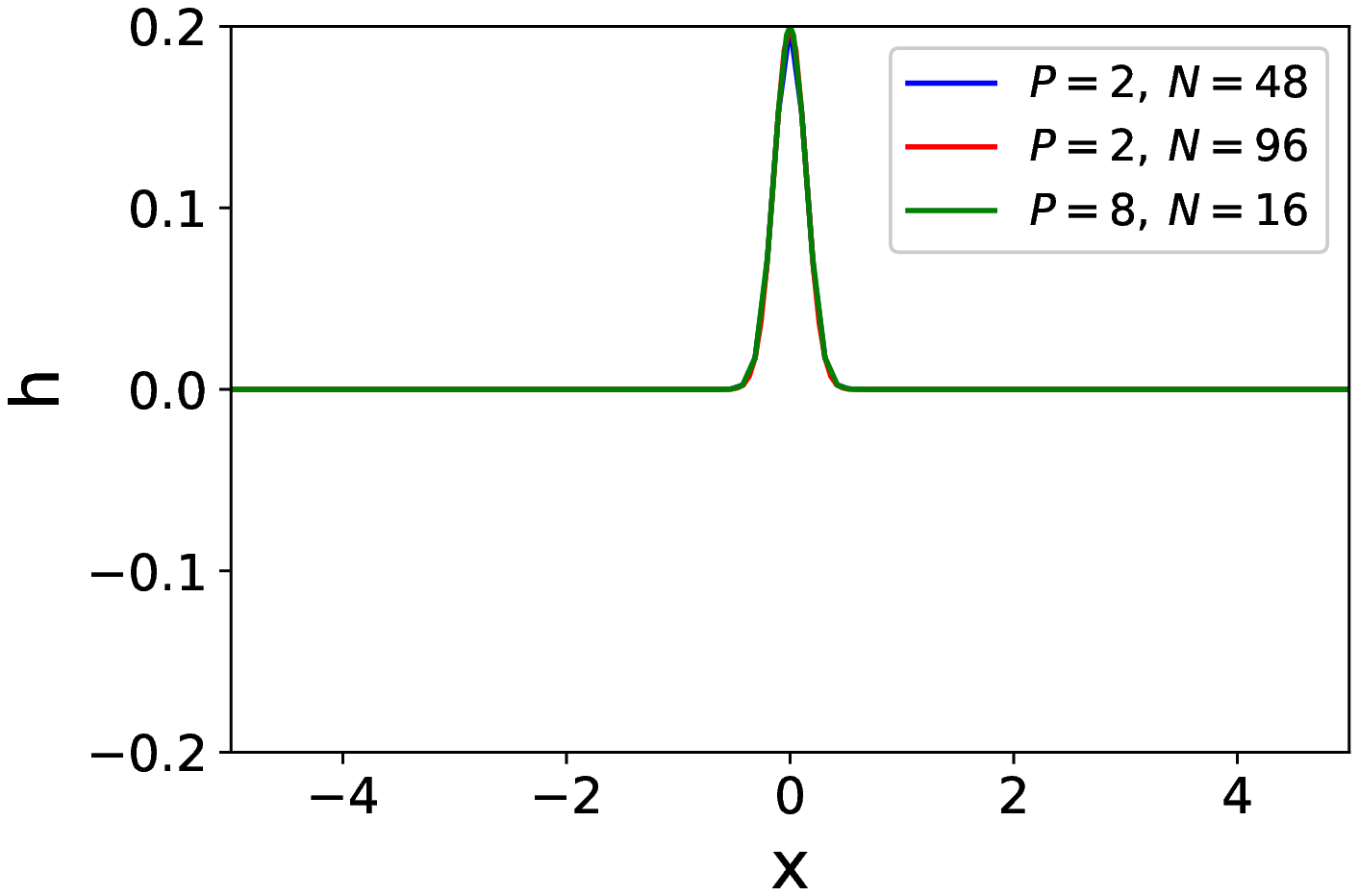}
         \caption{}
     \end{subfigure}
    \begin{subfigure}[b]{0.325\textwidth}
         \includegraphics[width=\textwidth]{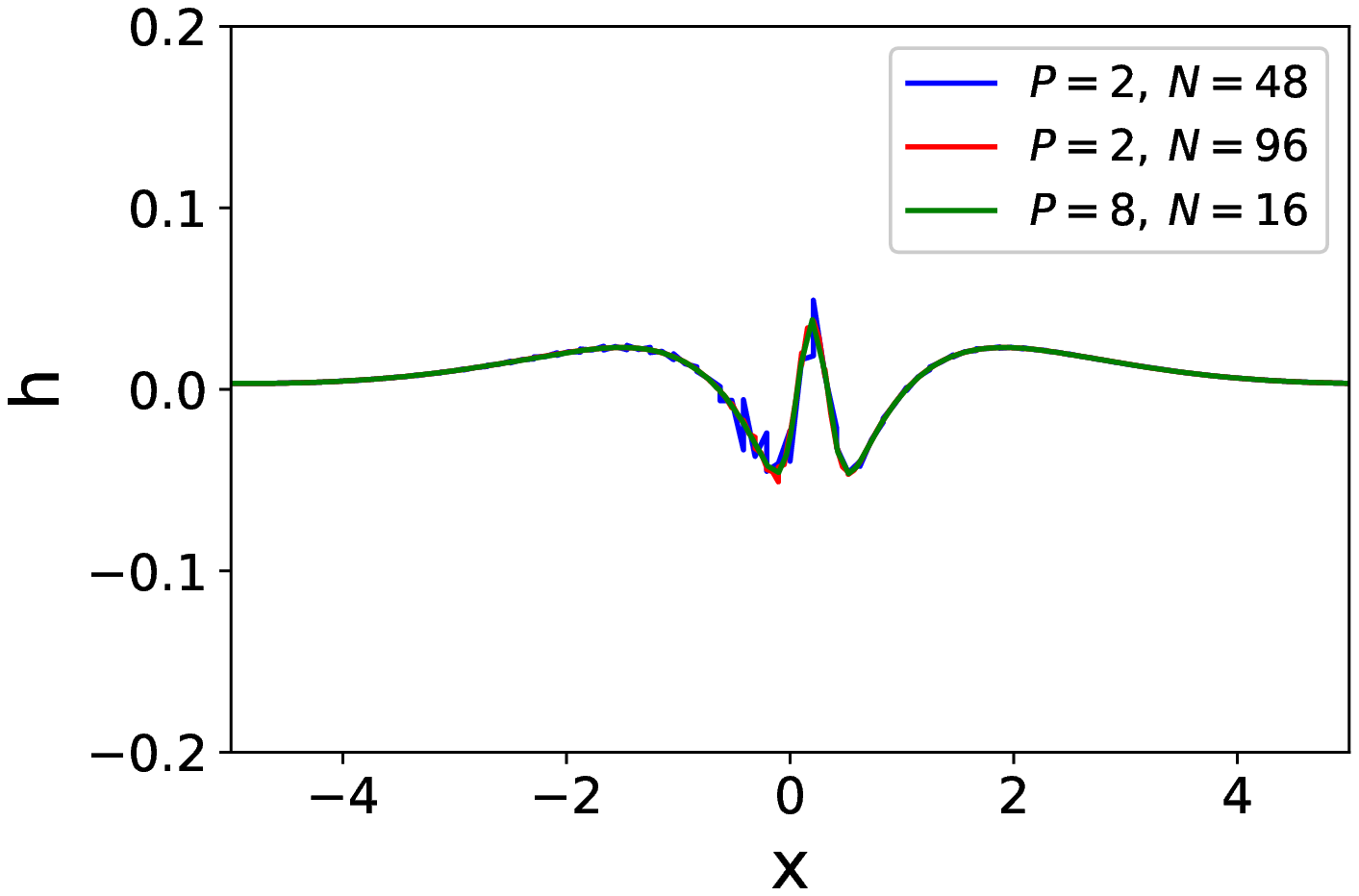}
         \caption{}
     \end{subfigure}
     \begin{subfigure}[b]{0.325\textwidth}
         \includegraphics[width=\textwidth]{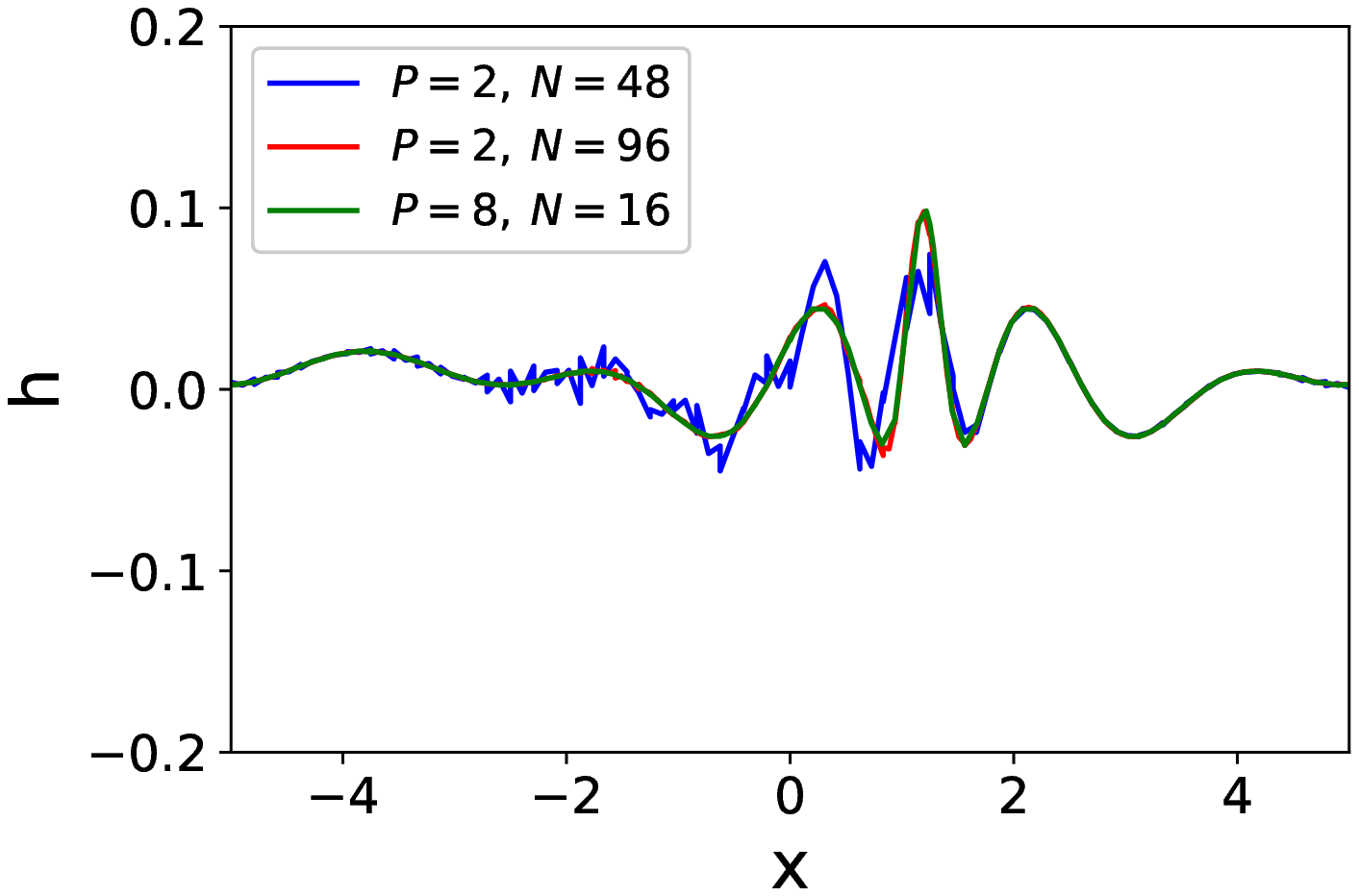}
         \caption{}
     \end{subfigure}
     \caption{Height $h$ numerical solution of the linearized Serre equations with the Gaussian initial condition \eqref{gaussianinitialcond} and the periodic boundary conditions \eqref{eq:peridic_boundary_conditions} at (a) $t = 0$, (b) $t = 1$, and (c) $t = 6$ for different polynomial degrees $P$ and number of elements $N$. } \label{gaussian1}
\end{figure}

\begin{figure}[H] 
     \centering
     \begin{subfigure}[b]{0.325\textwidth}
         \includegraphics[width=\textwidth]{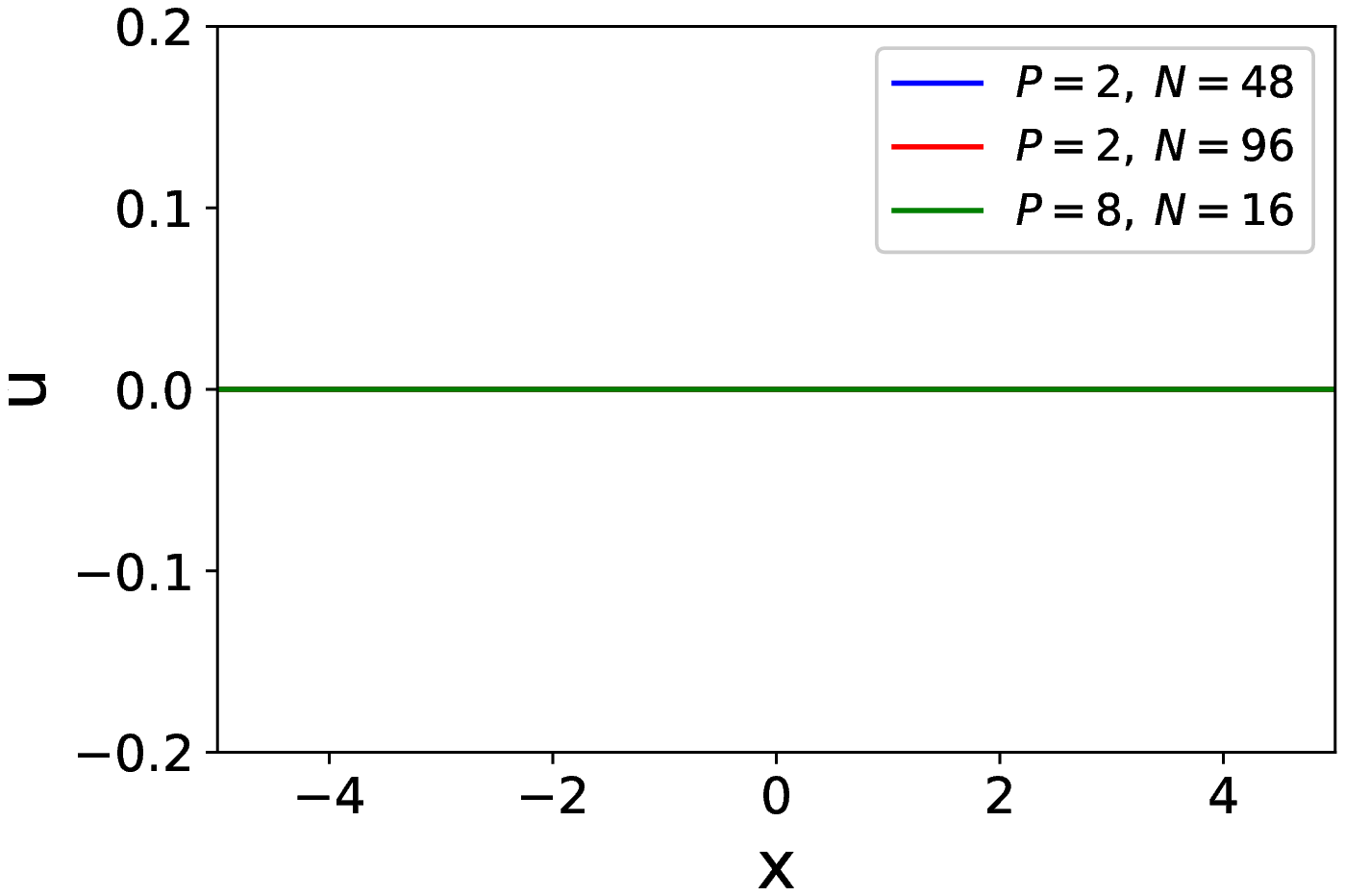}
          \caption{}
     \end{subfigure}
     \begin{subfigure}[b]{0.325\textwidth}
         \includegraphics[width=\textwidth]{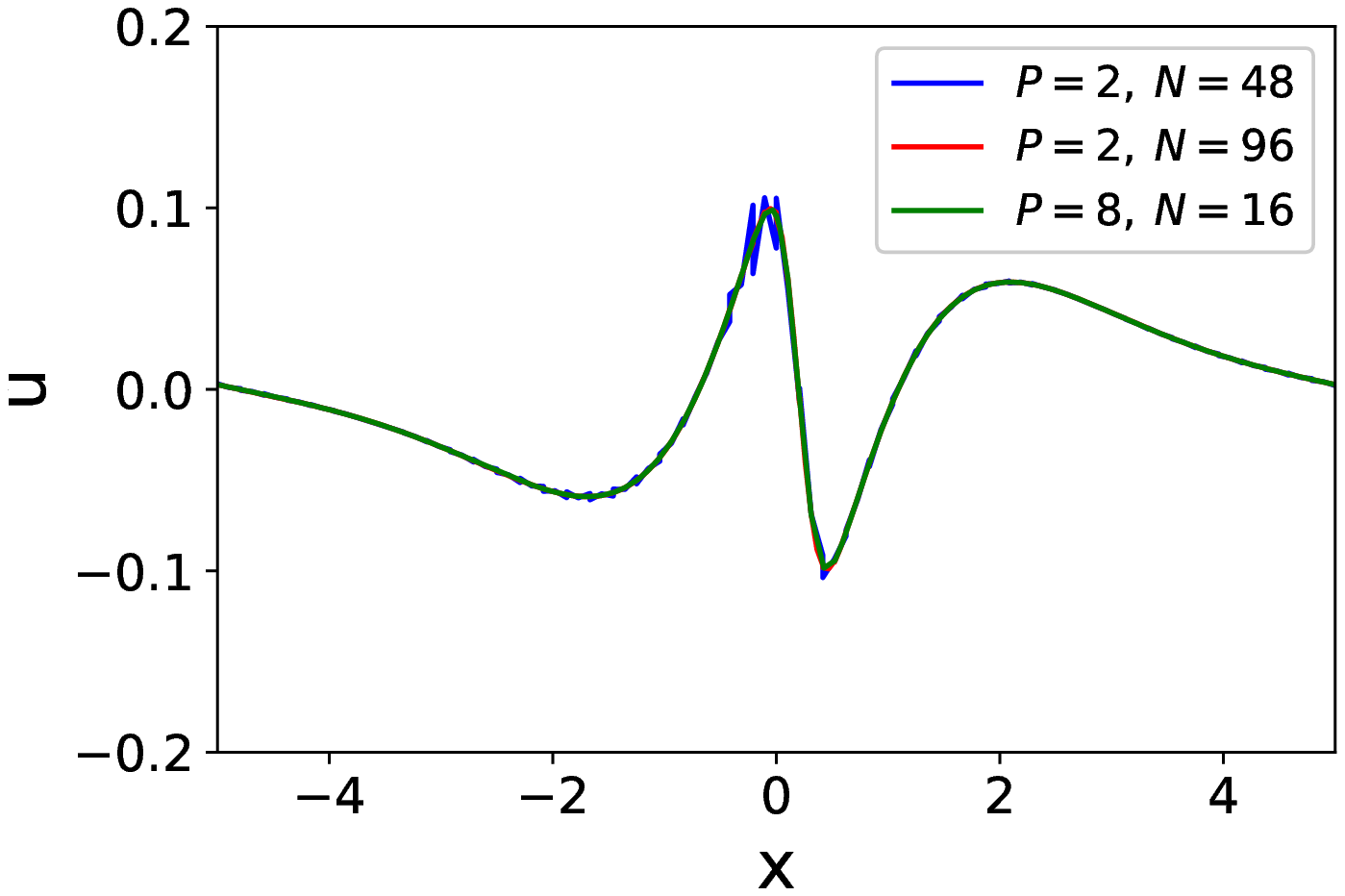}
         \caption{}
     \end{subfigure}
     \begin{subfigure}[b]{0.325\textwidth}
         \includegraphics[width=\textwidth]{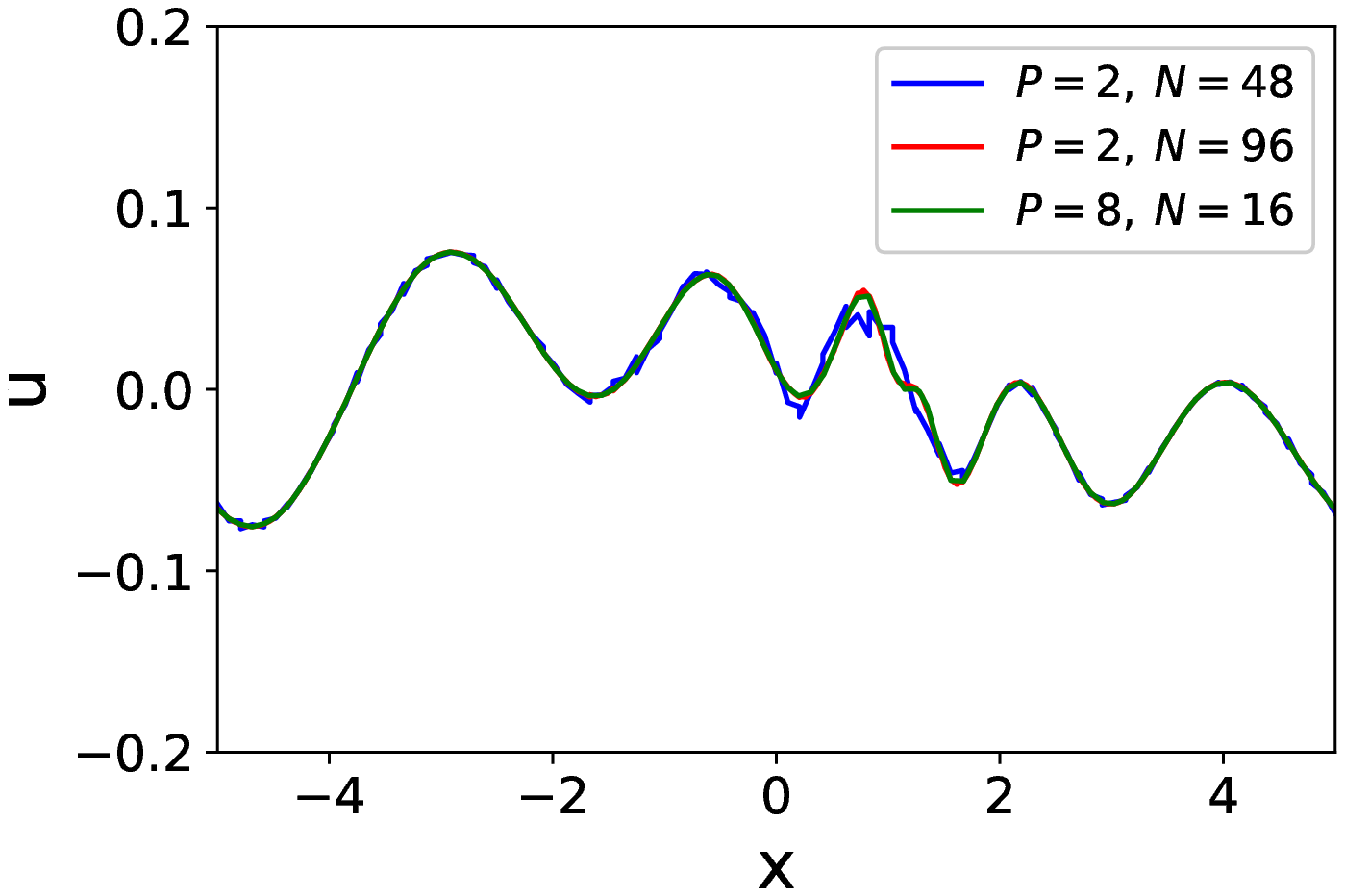}
          \caption{}
     \end{subfigure}
     \caption{Velocity $u$ numerical solution of the linearized Serre equations with the Gaussian initial condition \eqref{gaussianinitialcond} and the periodic boundary conditions \eqref{eq:peridic_boundary_conditions} at (a) $t = 0$, (b) $t = 1$, and (c) $t = 6$ for different polynomial degrees $P$ and number of elements $N$. } \label{gaussian2}
\end{figure}

\section{Conclusion}
In this paper, we have derived and analyzed well-posed boundary conditions for the linearized Serre equations. The analysis is based on the energy method and it identifies the number, location, and form of the boundary conditions so that the IBVP is well-posed. In particular, when the background flow velocity is nonzero it was shown that we need a total of four boundary conditions, specifically three boundary conditions at the inflow boundary and one boundary condition at the outflow boundary, to bound the solution in the energy norm. When the background flow velocity is zero only two boundary conditions are needed, one boundary condition at each end boundary of a 1D interval, to ensure well-posedness. Furthermore, to couple adjacent elements we derived well-posed interface conditions that ensure the conservation of energy, mass, and linear momentum. 

We have developed a provably stable DGSEM for the linearized Serre equations of arbitrary order of accuracy. The discretization is based on discontinuous Galerkin spectral derivative operators that satisfy the SBP properties for first, second, and third order derivatives. These operators are used in combination with the SAT method to impose the interface and boundary conditions numerically in a stable manner. With appropriately chosen penalty parameters, we have shown that the proposed numerical interface and boundary treatments emulate the well-posedness properties in the continuous analysis. A priori error estimates were also derived in the energy norm. Numerical experiments have been presented verifying the theoretical results and demonstrating the efficiency of high order DGSEM for resolving highly oscillatory dispersive wave modes.

The numerical method developed in this paper has been extended for solving the nonlinear Serre equations, and it will be published in a forthcoming paper. An obvious extension of this paper is to develop a provably stable numerical method for solving the Serre equations in two spatial dimensions.

\appendix
\section{Eigen-decomposition} \label{eigendecompositionappendix}

We aim to eigen-decompose
\begin{equation*}
    \mathbf{A} = \left[\begin{matrix}- \frac{g U}{2} & - \frac{g H}{2} & 0 & 0 & 0\\- \frac{g H}{2} & - \frac{H U}{2} & 0 & \frac{H^{3} U}{6} & \frac{H^{3}}{6}\\0 & 0 & - \frac{H^{3} U}{6} & 0 & 0\\0 & \frac{H^{3} U}{6} & 0 & 0 & 0\\0 & \frac{H^{3}}{6} & 0 & 0 & 0\end{matrix}\right]
\end{equation*}
in \eqref{matrixA}. We introduce the transformation matrix
\begin{equation*}
\mathbf{P} = \left[\begin{matrix}1 & 0 & 0 & 0 & \frac{3 g}{H^{2}}\\0 & 1 & 0 & 0 & 0\\0 & 0 & 1 & 0 & 0\\0 & 0 & 0 & 1 & - U\\0 & 0 & 0 & 0 & 1\end{matrix}\right],
\end{equation*}
and then we utilize this matrix to transform the matrix $\mathbf{A}$ as follows:
\begin{equation} \label{transformedmatrix}
    \mathbf{P}\mathbf{A}\mathbf{P}^{\top} = \left[\begin{matrix}- \frac{g U}{2} & 0 & 0 & 0 & 0\\0 & - \frac{H U}{2} & 0 & 0 & \frac{H^{3}}{6}\\0 & 0 & - \frac{H^{3} U}{6} & 0 & 0\\0 & 0 & 0 & 0 & 0\\0 & \frac{H^{3}}{6} & 0 & 0 & 0\end{matrix}\right].
\end{equation}
Applying the eigenvalue decomposition to \eqref{transformedmatrix} yields
\begin{equation*} \label{eigendecompose}
    \mathbf{P}\mathbf{A}\mathbf{P}^{\top} = \mathbf{Q}\mathbf{\Lambda}\mathbf{Q}^\top,
\end{equation*}
where
\begin{align*} \label{eigenvalues}
    \mathbf{\Lambda} &= \operatorname{diag}\left(\lambda_1, \lambda_2, \lambda_3, \lambda_4, \lambda_5\right) \\
&= \left[\begin{matrix}0 & 0 & 0 & 0 & 0\\0 & - \frac{H^{3} U}{6} & 0 & 0 & 0\\0 & 0 & - \frac{g U}{2} & 0 & 0\\0 & 0 & 0 & - \frac{H U}{4} - \frac{H \sqrt{4 H^{4} + 9 U^{2}}}{12} & 0\\0 & 0 & 0 & 0 & - \frac{H U}{4} + \frac{H \sqrt{4 H^{4} + 9 U^{2}}}{12}\end{matrix}\right].
\end{align*}
is the diagonal matrix of the eigenvalues of $\mathbf{P}\mathbf{A}\mathbf{P}^{\top}$ and $\mathbf{Q}$ is the orthogonal matrix containing the corresponding eigenvectors.
Using this obtained decomposition, the boundary term can be further rewritten as
\begin{equation*}
    \operatorname{BT} = \mathbf{w}^{\top} \mathbf{\Lambda} \mathbf{w} = \sum_{i=1}^5 	\lambda_i \mathbf{w}_i^2
\end{equation*}
where the vector $\mathbf{w}$ is given by
\begin{equation*}
    \mathbf{w} = \left(\mathbf{P}^{-1} \mathbf{Q} \right)^{\top}.
\end{equation*}

\section{Proof of \cref{interfacesatlemma}} \label{interfacesatappendix}

We observe that the penalty terms in \eqref{interfacesat1} can be rewritten as
\begin{align*}
\tau_{11}H\mathbf{M}^{-1}\widetilde{\mathbf{B}}\mathbf{u} + \tau_{12}U\mathbf{M}^{-1}\widetilde{\mathbf{B}}\mathbf{h} &= \frac{1}{2}\mathbf{M}^{-1}\widetilde{\mathbf{B}}(H\mathbf{u} + U\mathbf{h}) \\ &= -\left(\mathbf{D} - \frac{1}{2}\mathbf{M}^{-1}\widetilde{\mathbf{B}}\right)(H\mathbf{u} + U\mathbf{h}) + \mathbf{D}(H\mathbf{u} + U\mathbf{h}) \\
&= -\widetilde{\mathbf{D}}(H\mathbf{u} + U\mathbf{h}) + \mathbf{D}(H\mathbf{u} + U\mathbf{h}),
\end{align*}
and hence \eqref{interfacesat1} becomes
\begin{equation*}
    \frac{d\mathbf{h}}{dt} + \widetilde{\mathbf{D}}(H\mathbf{u} + U\mathbf{h}) + \alpha_h \mathbf{M}^{-1}\widetilde{\mathbf{B}}^{\top} \widetilde{\mathbf{B}}\mathbf{h} = \mathbf{0}.
\end{equation*}
Similarly, we write the first two penalty terms in \eqref{interfacesat2} as
\begin{align*}
\tau_{21}g\mathbf{M}^{-1}\widetilde{\mathbf{B}}\mathbf{h} + \tau_{22}U\mathbf{M}^{-1}\widetilde{\mathbf{B}}\mathbf{u} &= \frac{1}{2}\mathbf{M}^{-1}\widetilde{\mathbf{B}}(g\mathbf{h} + U\mathbf{u}) \\
&= -\widetilde{\mathbf{D}}(g\mathbf{h} + U\mathbf{u}) + \mathbf{D}(g\mathbf{h} + U\mathbf{u}),
\end{align*}
the penalty terms involving $\gamma_{ij}$ as
\begin{align*}
&\gamma_{21}H^2\mathbf{D}\mathbf{M}^{-1}\widetilde{\mathbf{B}}\frac{d\mathbf{u}}{dt} +\gamma_{22}H^2\mathbf{M}^{-1}\widetilde{\mathbf{B}}\mathbf{D}\frac{d\mathbf{u}}{dt}
+\gamma_{23}H^2\mathbf{M}^{-1}\widetilde{\mathbf{B}}\mathbf{M}^{-1}\widetilde{\mathbf{B}}\frac{d\mathbf{u}}{dt} \\ &= 
\frac{H^2}{3}\left(-\frac{1}{2}\mathbf{D}\mathbf{M}^{-1}\widetilde{\mathbf{B}}-\frac{1}{2}\mathbf{M}^{-1}\widetilde{\mathbf{B}}\mathbf{D}+\frac{1}{4}\left(\mathbf{M}^{-1}\widetilde{\mathbf{B}}\right)^2 \right)\frac{d\mathbf{u}}{dt} \\
&= \frac{H^2}{3}\widetilde{\mathbf{D}}\frac{d\mathbf{u}}{dt} - \frac{H^2}{3}\mathbf{D}^2\frac{d\mathbf{u}}{dt},
\end{align*}
and the remaining penalty terms as 
\begin{align*}
    &\sigma_{21}H^2U \mathbf{D}^2\mathbf{M}^{-1}\widetilde{\mathbf{B}}\mathbf{u} + \sigma_{22}H^2U \mathbf{D}\mathbf{M}^{-1}\widetilde{\mathbf{B}} \mathbf{D}\mathbf{u} + \sigma_{23}H^2U \mathbf{D}\mathbf{M}^{-1}\widetilde{\mathbf{B}}\mathbf{M}^{-1}\widetilde{\mathbf{B}}\mathbf{u} \nonumber \\
    &+\sigma_{24}H^2U\mathbf{M}^{-1}\widetilde{\mathbf{B}}\mathbf{D}^2\mathbf{u} + \sigma_{25}H^2U\mathbf{M}^{-1}\widetilde{\mathbf{B}}\mathbf{D}\mathbf{M}^{-1}\widetilde{\mathbf{B}}\mathbf{u} + \sigma_{26}H^2U\mathbf{M}^{-1}\widetilde{\mathbf{B}}\mathbf{M}^{-1}\widetilde{\mathbf{B}}\mathbf{D}\mathbf{u} \nonumber \\
    &+\sigma_{27}H^2U\mathbf{M}^{-1}\widetilde{\mathbf{B}}\mathbf{M}^{-1}\widetilde{\mathbf{B}}\mathbf{M}^{-1}\widetilde{\mathbf{B}}\mathbf{u} \\
    &= \frac{H^2U}{3}\bigg(-\frac{1}{2}\mathbf{D}^2\mathbf{M}^{-1}\widetilde{\mathbf{B}} - \frac{1}{2}\mathbf{D}\mathbf{M}^{-1}\widetilde{\mathbf{B}} \mathbf{D} + \frac{1}{4}\mathbf{D}\left(\mathbf{M}^{-1}\widetilde{\mathbf{B}}\right)^2 - \frac{1}{2}\mathbf{M}^{-1}\widetilde{\mathbf{B}}\mathbf{D}^2\\ 
    &\quad\quad\quad\quad\quad+\frac{1}{4}\mathbf{M}^{-1}\widetilde{\mathbf{B}}\mathbf{D}\mathbf{M}^{-1}\widetilde{\mathbf{B}} + \frac{1}{4}\left(\mathbf{M}^{-1}\widetilde{\mathbf{B}}\right)^2\mathbf{D} - \frac{1}{8}\left( \mathbf{M}^{-1}\widetilde{\mathbf{B}}\right)^3 \bigg)\mathbf{u} \\
    &=\frac{H^2U}{3}\widetilde{\mathbf{D}}^3\mathbf{u} - \frac{H^2U}{3}\mathbf{D}^3\mathbf{u}.
\end{align*}
Utilizing these equations, \eqref{interfacesat2} becomes
\begin{equation*}
    \frac{d\mathbf{u}}{dt} + \widetilde{\mathbf{D}}\left(g\mathbf{h} + U\mathbf{u} - \frac{H^2U}{3}\widetilde{\mathbf{D}}^2\mathbf{u} - \frac{H^2}{3}\widetilde{\mathbf{D}}\left( \frac{d\mathbf{u}}{dt}\right)\right) +  \alpha_u  \mathbf{M}^{-1}\widetilde{\mathbf{B}}^{\top} \widetilde{\mathbf{B}}\mathbf{u} = \mathbf{0}.
\end{equation*}

\end{document}